\DeclareMathOperator{\ap}{Ass}
\DeclareMathOperator{\ann}{ann}
\DeclareMathOperator{\spec}{Spec}
\DeclareMathOperator{\dep}{depth}
\DeclareMathOperator{\nat}{Nat}
\DeclareMathOperator{\bighom}{Hom}
\DeclareMathOperator{\bigmod}{Mod}
\DeclareMathOperator{\fgmod}{mod}
\DeclareMathOperator{\tor}{Tor}
\DeclareMathOperator{\ext}{Ext}
\DeclareMathOperator{\bigh}{H}
\DeclareMathOperator{\im}{im}
\DeclareMathOperator{\coker}{coker}
\DeclareMathOperator{\gr}{gr}
\DeclareMathOperator{\id}{id}
\DeclareMathOperator{\lcm}{lcm}
\renewcommand{\geq}{\geqslant}
\renewcommand{\leq}{\leqslant}
\renewcommand{\bar}{\overline}
\renewcommand{\phi}{\varphi}
\newcommand{\ds}{\displaystyle}
\newcommand{\mc}{multiplicatively closed\xspace}
\newcommand{\cmc}{common multiplicatively closed\xspace}
\newcommand{\nn}{\mathbb{N}}
\newcommand{\zln}{\mathbb{Z}}
\newcommand{\rlin}{\mathscr{F}}
\newcommand{\seq}{\mathscr{S}}
\newcommand{\tormod}{\mathscr{T}}
\newcommand{\separate}{\medskip}
\theoremstyle{plain}
\newtheorem{thm}{Theorem}
\newtheorem{cor}[thm]{Corollary}
\newtheorem{lem}[thm]{Lemma}
\theoremstyle{definition}
\newtheorem{defn}[thm]{Definition}
\newtheorem{notn}[thm]{Notation}
\newtheorem{eg}[thm]{Example}
\newtheorem*{ack}{Acknowledgements}
\theoremstyle{remark}
\newtheorem{rmk}[thm]{Remark}
\newtheorem{qn}[thm]{Question}
\begin{document}
\title{Covariant functors and asymptotic stability}
\author{Tony Se}
\address{Department of Mathematics\\
University of Kansas\\
 Lawrence, KS 66045-7523 USA}
\email{tonyse@ku.edu}

\date{\today}
\keywords{Associated prime, coherent functor}

\subjclass{Primary 13A02, 13A15, 13A30, 13E05.}

\numberwithin{thm}{section}

\begin{abstract}
  Let $R$ be a commutative Noetherian ring, $I,J$ ideals of $R$ and
  $M$ a finitely generated $R$-module. Let $F$ be a covariant $R$-linear functor
  from the category of finitely generated $R$-modules to itself.
  We first show that if $F$ is coherent, then the sets $\ap_R F(M/I^n M)$,
  $\ap_R F(I^{n-1}M/I^n M)$ and the values $\dep_J F(M/I^n M)$, $\dep_J F(I^{n-1}M/I^n M)$
  become independent of $n$ for large $n$.
  Next, we consider several examples in which $F$ is a rather familiar functor, but is
  not coherent or not even finitely generated in general. In these cases, the sets
  $\ap_R F(M/I^n M)$ still become independent of $n$ for large $n$. We then show one
  negative result where $F$ is not finitely generated. Finally, we give a positive result
  where $F$ belongs to a special class of functors which are not finitely generated in
  general, an example of which is the zeroth local cohomology functor.
\end{abstract}

\maketitle

\section{Introduction}

In this paper, we will extend two results on asymptotic stability
by M.\ Brodmann. Let us begin by fixing some terminology.
A ring will mean a commutative ring with unity, unless specified otherwise.
For a ring $R$, we let
$\bigmod(R)$ denote the category of $R$-modules and $\fgmod(R)$
the category of finitely generated $R$-modules.
A functor will mean a covariant functor. For a nonempty set $X$ and
a sequence of elements $\{x_n\}_{n \geq k}$ of $X$,
we say that asymptotic stability holds for the elements $x_n$, or that
the elements $x_n$ stabilize, if the sequence $\{x_n\}_{n \geq k}$ is
eventually constant.
\separate

For the rest of this section, we will let $R$ be a Noetherian ring unless specified
otherwise, $L,M,N \in \fgmod(R)$ and $I,J$ be ideals of $R$.
The background of our project can be traced back to
one of Ratliff's papers.

\begin{qn} \cite[Introduction]{R}
  Suppose that $R$ is a domain and $P$ is a prime ideal of $R$.
  If $P \in \ap_R(R/I^k)$ for some $k \geq 1$, is
  $P \in \ap_R (R/I^n)$ for all large $n$ ?
\end{qn}

Brodmann \cite[(9)]{BPrime} gave a negative answer to
the question, but at the same time, he proved a related, by now
well-known result. Using the notation established so far,
we will state his\footnote{Prof.~Daniel Katz informed the author that
Brodmann's proof is already present in \cite{R}, although
arranged in a different order.} first result that we are interested in. 

\begin{thm} \cite[page~16]{BPrime}
\label{thm:BPrime}
The sets $\ap_R(M/I^n M)$ and $\ap_R(I^{n-1} M/I^n M)$ stabilize.
\end{thm}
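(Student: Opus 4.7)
The plan is to derive both assertions from a single graded-module principle: \emph{if $S$ is a Noetherian $\nn$-graded ring generated in degree one over $S_0$, and $N = \bigoplus_n N_n$ is a finitely generated graded $S$-module, then the sets $\ap_{S_0}(N_n)$ are eventually constant in $n$.} This classical result is itself provable by reducing to cyclic $N$ and then to $N = S/\mathfrak{p}$ for a homogeneous prime $\mathfrak{p}$.

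For the second set, I would apply the principle to $\gr_I(M) = \bigoplus_{n \geq 0} I^n M/I^{n+1}M$ over the Noetherian graded ring $\gr_I(R) = \bigoplus_{n \geq 0} I^n/I^{n+1}$; both are Noetherian and finitely generated since $R$ is Noetherian and $M$, $I$ are finitely generated. This yields stability of $\ap_R(I^n M/I^{n+1}M)$, which after reindexing is the desired assertion for $\ap_R(I^{n-1}M/I^n M)$.

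For the first set, the argument has two steps. \emph{Step 1 (boundedness).} Iterating the short exact sequences $0 \to I^{n-1}M/I^n M \to M/I^n M \to M/I^{n-1}M \to 0$ yields
\[
 \ap_R(M/I^n M) \subseteq \ap_R(M/IM) \cup \bigcup_{k \geq 1} \ap_R(I^{k-1}M/I^k M),
\]
and by the second part the right-hand side is a fixed finite set $\mathcal{C}$. So the sequence $\{\ap_R(M/I^n M)\}_n$ takes values in the finite power set $2^{\mathcal{C}}$, and it suffices to show that it is eventually monotone increasing, i.e., every $P \in \ap_R(M/I^{n_0}M)$ persists to $\ap_R(M/I^n M)$ for all $n \gg n_0$.

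\emph{Step 2 (persistence).} Given a witness $m \in M$ with $\ann_R \bar m = P$ in $M/I^n M$, the natural attempt at a lifted witness in $M/I^{n+1}M$ is $m' = am$ for some $a \in I$; the containment $P m' = a P m \subseteq a I^n M \subseteq I^{n+1}M$ then holds automatically, giving $P \subseteq \ann_R \overline{m'}$. The main obstacle, and where the argument is delicate, is that the containment $\ap_R(M/I^n M) \subseteq V(I)$ (since $I^n$ annihilates $M/I^n M$) forces $I \subseteq P$, so necessarily $a \in P$, and the annihilator of $\overline{m'}$ in $M/I^{n+1}M$ computes as $((I^{n+1}M :_R m) : a)$, which can be strictly smaller than $P$. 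Producing a genuine witness for $P$ in $M/I^{n+1}M$ therefore requires a careful argument separating the contribution of the $I$-torsion submodule $\Gamma_I(M)$ (which, by Artin-Rees applied to $\Gamma_I(M) \subseteq M$, embeds into $M/I^n M$ for $n$ large) from that of $M/\Gamma_I(M)$, together with prime avoidance against the finitely many primes in $\mathcal{C}$ to make the choice of $a \in I$ deliver the desired annihilator.
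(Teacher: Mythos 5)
Your handling of the second assertion is correct and coincides with the route the paper itself uses: $\bigoplus_{n\geq 0}I^nM/I^{n+1}M$ is a finitely generated graded module over $\bigoplus_{n\geq 0}I^n/I^{n+1}$, and the stability of the $\ap_R$ of graded components is exactly the quoted Melkersson--Schenzel result (Theorem~\ref{thm:MSgraded}); note, though, that your parenthetical justification of that principle via reduction to cyclic modules and to $S/\mathfrak{p}$ is not itself a proof, since prime filtrations control associated primes only from above, so you should simply cite the graded lemma. Step 1 of your argument for the first assertion (the candidate set $\mathcal{C}$ is finite, so it suffices to prove eventual monotonicity) is also a correct reduction.

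The genuine gap is Step 2, which is where the entire content of the theorem lies and which you do not prove. The lifting $m\mapsto am$ with $a\in I$ only yields $P\subseteq\ann_R(\overline{am})=((I^{n+1}M:_Rm):_Ra)$, and the failure mode is that this annihilator can be strictly \emph{larger} than $P$ (your text says ``smaller,'' which is backwards) --- indeed $am$ may already lie in $I^{n+1}M$, making the annihilator all of $R$. Since every prime in play contains $I$, prime avoidance cannot supply an $a\in I$ avoiding the primes of $\mathcal{C}$, and the suggestion to separate off $\Gamma_I(M)$ is left as a gesture rather than an argument; after reducing to $\Gamma_I(M)=0$ the same obstruction persists. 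Note also that persistence from an arbitrary $n_0$ is false in general ($\ap_R(M/I^nM)$ need not be monotone), so a correct argument must use the largeness of $n_0$ in an essential way that your sketch does not identify. All complete proofs of this statement --- Brodmann's original one, and the Katz--West theorem the present paper builds on --- get around exactly this point by passing to finitely generated graded modules over the Rees algebra together with the Artin--Rees lemma, rather than lifting witnesses element by element. The quickest repair here is to invoke Theorem~\ref{thm:KWgen} with $T=U=V=W=M$, which gives the stability of $\ap_R\bigl((M+I^nM)/I^nM\bigr)=\ap_R(M/I^nM)$ directly; as written, your proposal establishes the second assertion but not the first.
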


The second result that we are interested in is as follows.

\begin{thm} \cite[Theorems~2(i) and 12(i)]{BSpread}
\label{thm:BSpread}
The values $\dep_J(M/I^n M)$ and
$\dep_J(I^{n-1} M/I^n M)$ stabilize.
\end{thm}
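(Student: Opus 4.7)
The plan is to use the homological characterization $\dep_J(N) = \inf\{i \geq 0 : \ext^i_R(R/J, N) \neq 0\}$ (with the convention $\inf \emptyset = \infty$), and to handle the two statements by different methods.

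For $\dep_J(I^{n-1}M/I^nM)$, I package all such modules into the associated graded module $G := \gr_I(M) = \bigoplus_{n \geq 1} I^{n-1}M/I^nM$, which is a finitely generated graded module over the Noetherian graded ring $\mathcal{G} := \gr_I(R)$, generated in degree $1$ over its degree-$0$ part $R/I$. Taking a resolution $P_\bullet \to R/J$ by finitely generated free $R$-modules and noting that each $\bighom_R(P_j, G)$ is a finite direct sum of copies of $G$ (hence a finitely generated graded $\mathcal{G}$-module), the cohomology $\ext^i_R(R/J, G) = \bigoplus_n \ext^i_R(R/J, I^{n-1}M/I^nM)$ is itself a finitely generated graded $\mathcal{G}$-module. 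The crucial ingredient is then a dichotomy lemma: for a finitely generated graded module $T = \bigoplus T_n$ over a Noetherian graded ring generated in degree $1$ over its degree-$0$ part, the set $\{n \in \nn : T_n \neq 0\}$ is either finite or cofinite. I would prove this by a graded prime filtration, reducing to $T = (\mathcal{G}/\mathfrak{p})(d)$ for graded primes $\mathfrak{p}$; since $\mathcal{G}/\mathfrak{p}$ is a graded domain generated in degree $1$, either $(\mathcal{G}/\mathfrak{p})_1 = 0$ and $T$ is concentrated in finitely many degrees, or a nonzero $t \in (\mathcal{G}/\mathfrak{p})_1$ has $t^n \neq 0$ in every degree. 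Applying the lemma to each $\ext^i_R(R/J, G)$ shows the vanishing of $\ext^i_R(R/J, I^{n-1}M/I^nM)$ is eventually constant in $n$; applying it to $G/JG$ handles the $\infty$-depth case (where $J \cdot (I^{n-1}M/I^nM) = I^{n-1}M/I^nM$). Together with the uniform bound $\dep_J(I^{n-1}M/I^nM) \leq \dim M$ in the finite-depth case, only finitely many $i$ need to be examined and the minimum stabilizes.

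For $\dep_J(M/I^nM)$, the graded-module approach does not apply directly because $\bigoplus M/I^nM$ is not finitely generated over a natural Noetherian graded ring. Instead, I would iteratively construct a regular sequence $x_1, x_2, \ldots \in J$ that is simultaneously $(M/I^nM)$-regular for all large $n$, using Theorem~\ref{thm:BPrime} at each step. At step $k$, set $M_k = M/(x_1, \ldots, x_{k-1})M$ and let $A_k^* = \lim_n \ap(M_k/I^n M_k)$, which exists by Theorem~\ref{thm:BPrime}. If $J \subseteq P$ for some $P \in A_k^*$, the procedure stops and $\dep_J(M/I^nM) = k-1$ for $n$ large; otherwise prime avoidance supplies $x_k \in J \setminus \bigcup A_k^*$, and the identity $(M/I^nM)/(x_1, \ldots, x_k)(M/I^nM) \cong M_{k+1}/I^n M_{k+1}$ (with $M_{k+1} = M/(x_1, \ldots, x_k)M$) allows us to apply Theorem~\ref{thm:BPrime} to $M_{k+1}$ and continue. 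The iteration terminates because $\dep_J(M/I^nM) \leq \dim M$ when finite; the infinite-depth alternative $JM = M$ is equivalent to the $n$-independent condition $J + I + \ann(M) = R$.

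The main obstacle I anticipate is the careful proof of the dichotomy lemma (ensuring finite-or-cofinite is preserved by the finite unions arising from the prime filtration) together with the uniform handling of the $\infty$-versus-finite-depth dichotomy in the first case; the second case's iteration relies crucially on the identity $(M/I^nM)/(x_1, \ldots, x_k)(M/I^nM) \cong M_{k+1}/I^n M_{k+1}$, which fails up to Artin--Rees-type correction terms for the analogous quotients of $I^{n-1}M/I^nM$, explaining why the two parts of the theorem require different treatments.
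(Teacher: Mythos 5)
Your argument is correct in substance, but only its first half follows the route taken in the paper. For $\dep_J(M/I^n M)$, your iteration --- apply Theorem~\ref{thm:BPrime} to $M_k=M/(x_1,\dots,x_{k-1})M$, stop when $J$ lies inside a stable associated prime, otherwise choose $x_k\in J$ by prime avoidance --- is exactly Brodmann's argument, which the paper reproduces (in the generality of $T_n=(U+I^nV)/I^nW$) in Theorem~\ref{thm:Bdepth}; the only cosmetic difference is that the paper organizes the bookkeeping as an induction on $\liminf_n \dep_J T_n$ rather than as an explicit iteration with a termination bound. For $\dep_J(I^{n-1}M/I^n M)$ you genuinely diverge: the paper runs the same regular-element induction again, but at the graded level (Corollary~\ref{cor:gradeddep}), using the Melkersson--Schenzel stability of $\ap_R$ of graded components (Theorem~\ref{thm:MSgraded}) and the fact that $G/JG$ and $G/xG$ are again finitely generated graded modules; you instead view $\ext^i_R(R/J,\gr_I(M))=\bigoplus_n\ext^i_R(R/J,I^{n-1}M/I^nM)$ as a finitely generated graded $\gr_I(R)$-module and invoke a degreewise finite-or-cofinite dichotomy. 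Your route buys a sharper conclusion (eventual constancy of the vanishing of each individual $\ext^i_R(R/J,I^{n-1}M/I^nM)$, with no choice of regular elements), and your dichotomy lemma is even easier than your prime-filtration sketch suggests: for $n$ beyond the degrees of a homogeneous generating set one has $T_{n+1}=(\gr_I(R))_1\,T_n$, so a single vanishing component forces all later ones to vanish. What the paper's route buys is uniformity: one induction scheme handles $M/I^nM$, the graded components, and the coherent-functor statements feeding into Theorem~\ref{thm:main} all at once.

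Two small points to patch. First, the bound $\dep_J(-)\le\dim M$, which you use both to terminate the iteration and to restrict to finitely many $i$, is only available when $\dim M<\infty$, and a general Noetherian ring need not have finite dimension; replace it by the uniform bound $\dep_J(N)\le$ (minimal number of generators of $J$) whenever $JN\ne N$, which follows from depth sensitivity of the Koszul complex on a generating set of $J$ and is independent of $n$. Second, the infinite-depth alternative for $M/I^nM$ is $(J+I^n)M=M$, not $JM=M$; your displayed criterion $J+I+\ann_R(M)=R$ is the correct $n$-independent reformulation, so this is only a mislabeling. Excluding that degenerate case at the outset also guarantees $M_k/I^nM_k\neq 0$ at every step of your iteration (its support contains $V(J+I+\ann_R(M))$), so the quotients are genuinely nonzero, the sets $A_k^*$ are nonempty, and the depth-counting identity $\dep_J(M/I^nM)=(k-1)+\dep_J(M_k/I^nM_k)$ is applied legitimately.
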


Most of this paper will be related to Theorem~\ref{thm:BPrime}.
There have been numerous generalizations of the theorem
over the years. Here are a few of them\footnote{Although the
theorems quoted here are related, the authors of \cite{KW} and \cite{MS}
did not seem to know about the results of each other.}.

\begin{thm} \cite[Theorem~1]{MS}
  The sets $\ap_R \tor_i^R (N, R/I^n)$
  and $\ap_R \tor_i^R (N,I^{n-1}/I^n)$ stabilize for any $i \geq 0$.
\end{thm}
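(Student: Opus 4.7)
The plan is to exhibit suitable direct sums of the $\tor$ modules in question as finitely generated graded modules over the Rees algebra $\mathcal{R} = R[It] = \bigoplus_{n \geq 0} I^n$ or the associated graded ring $\gr_I(R) = \bigoplus_{n \geq 0} I^n/I^{n+1}$, both of which are Noetherian graded $R$-algebras, and then to invoke the graded meta-theorem that underlies Theorem~\ref{thm:BPrime} itself: for a Noetherian graded ring $A = \bigoplus_{n \geq 0} A_n$ and a finitely generated graded $A$-module $X = \bigoplus_n X_n$, the sets $\ap_{A_0}(X_n)$ are eventually constant in $n$.

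The $I^{n-1}/I^n$ half is the easier one. Take a free resolution $F_\bullet \to N$ by finitely generated free $R$-modules. The complex $F_\bullet \otimes_R \gr_I(R)$ then consists of finitely generated graded $\gr_I(R)$-modules with $\gr_I(R)$-linear differentials of degree zero, so Noetherianness forces the $i$-th homology $\tor_i^R(N, \gr_I(R)) = \bigoplus_n \tor_i^R(N, I^n/I^{n+1})$ to be a finitely generated graded $\gr_I(R)$-module. The meta-theorem then yields stabilization of $\ap_R \tor_i^R(N, I^{n-1}/I^n)$.

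For $\bigoplus_n \tor_i^R(N, R/I^n)$ the naive approach fails because $\bigoplus_n R/I^n$ is not finitely generated as a graded $\mathcal{R}$-module, so I would instead use the long exact sequence of $\tor_\bullet^R(N,-)$ induced by $0 \to I^n \to R \to R/I^n \to 0$ and split on $i$. For $i \geq 2$, it gives $\tor_i^R(N, R/I^n) \cong \tor_{i-1}^R(N, I^n)$ as $R$-modules, and $\bigoplus_n \tor_{i-1}^R(N, I^n) = \tor_{i-1}^R(N, \mathcal{R})$ is a finitely generated graded $\mathcal{R}$-module by the same free-resolution argument as above, so the meta-theorem applies after noting that $\ap$ depends only on the underlying $R$-module. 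For $i = 1$, the long exact sequence produces short exact sequences $0 \to \tor_1^R(N, R/I^n) \to N \otimes_R I^n \to I^n N \to 0$ which assemble into an exact sequence of graded $\mathcal{R}$-modules
\[
0 \to \bigoplus_n \tor_1^R(N, R/I^n) \to N \otimes_R \mathcal{R} \to \bigoplus_n I^n N \to 0,
\]
in which $\bigoplus_n I^n N$ is finitely generated (being the image of the finitely generated $N \otimes_R \mathcal{R}$), hence so is the kernel. For $i = 0$, $\tor_0^R(N, R/I^n) = N/I^n N$ and stabilization is Brodmann's Theorem~\ref{thm:BPrime} applied with $M = N$.

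The main obstacle is the $R/I^n$ half for $i \geq 1$: one needs to present the $\tor$ modules as graded pieces of a finitely generated graded $\mathcal{R}$-module, even though $\bigoplus_n R/I^n$ itself is not one. The saving observation is that $\ap$ only depends on the underlying $R$-module, so for $i \geq 2$ one transports along the Tor isomorphism, and for $i = 1$ one realizes the object as the kernel of a surjection between finitely generated graded $\mathcal{R}$-modules. Once these identifications are set up, the remaining checks (that the tensored differentials are $\gr_I(R)$- or $\mathcal{R}$-linear of degree zero, that the evaluation map $N \otimes_R \mathcal{R} \to \bigoplus_n I^n N$ is $\mathcal{R}$-linear of degree zero, etc.) are routine.
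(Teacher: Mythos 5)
Your argument is correct, but note that the paper itself gives no proof of this statement: it is quoted from Melkersson--Schenzel, and the paper later recovers it (and more) from its coherent-functor machinery, since $\tor_i^R(N,-)$ is coherent by Lemma~\ref{lem:Har}\ref{item:torext}, so Theorem~\ref{thm:main} (via Corollary~\ref{cor:coh} and Corollary~\ref{cor:graded}) applies. Your route is the more hands-on one: for the $I^{n-1}/I^n$ half you use exactly the idea behind Corollary~\ref{cor:graded}, namely finiteness of $\bigoplus_n \tor_i^R(N,I^n/I^{n+1})$ as a graded module plus the graded stability lemma (Theorem~\ref{thm:MSgraded}); but for the $R/I^n$ half you replace the paper's Artin--Rees/Katz--West mechanism (Theorem~\ref{thm:KWgen}, which underlies Corollary~\ref{cor:coh}) by the long exact sequence of $0 \to I^n \to R \to R/I^n \to 0$, reducing $i \geq 2$ to $\tor_{i-1}^R(N,\mathcal{R})$ and realizing the $i=1$ case as a graded submodule of $N \otimes_R \mathcal{R}$, with $i=0$ being Brodmann's theorem. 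This buys a short, self-contained proof of the quoted statement, at the cost of generality: the paper's approach works uniformly for every coherent functor and also yields the depth statements, while your Tor-specific dimension shift does not transfer to that setting. Two small points to tidy up: the graded meta-theorem needs the algebra to be generated in degree $1$ (true for $\mathcal{R}=\bigoplus_n I^n$ and $\gr_I(R)$, but false for general Noetherian graded rings, where one only gets eventual periodicity); and for $\gr_I(R)$ the degree-zero ring is $R/I$, so either view the module as a finitely generated graded $\mathcal{R}$-module via $\mathcal{R} \twoheadrightarrow \gr_I(R)$ or note that each graded piece is killed by $I$, so $\ap_R$ and $\ap_{R/I}$ determine each other; also, finite generation of the kernel in your $i=1$ step follows simply because it is a graded submodule of the Noetherian module $N \otimes_R \mathcal{R}$, not because the image is finitely generated.
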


\begin{thm} \cite[Proposition~3.4]{KW}
  \label{thm:KWhom}
  Let $L \xrightarrow{\alpha} M \xrightarrow{\beta} N$ be a complex.
  Suppose that $L' \subseteq L$, $M' \subseteq M$ and $N' \subseteq N$
  are submodules such that $\alpha(L') \subseteq M'$ and
  $\beta(M') \subseteq N'$. For $n \geq 0$, let $\bigh(n)$ denote
  the homology of the induced complex
  \[
    \frac{L}{I^n L'} \xrightarrow{\alpha_n}
    \frac{M}{I^n M'} \xrightarrow{\beta_n}
    \frac{N}{I^n N'}
  \]
  Then the sets $\ap_R \bigh(n)$ stabilize.
\end{thm}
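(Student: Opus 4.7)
The plan is to place $H(n) := H^1(C_n^\bullet)$ into a short exact sequence whose flanking terms are controlled by Artin--Rees and the classical stability of associated primes of graded pieces of finitely generated graded modules over a Noetherian graded ring. With the complex $C^\bullet := (L \xrightarrow{\alpha} M \xrightarrow{\beta} N)$ placed in cohomological degrees $0,1,2$ and the subcomplex $K_n^\bullet := (I^n L' \to I^n M' \to I^n N')$, the short exact sequence of complexes $0 \to K_n^\bullet \to C^\bullet \to C_n^\bullet \to 0$ (with $C_n^\bullet := (L/I^n L' \to M/I^n M' \to N/I^n N')$) yields a long exact sequence in homology. Since $H^1(C^\bullet)$ and $H^2(C^\bullet)$ are independent of $n$, the middle portion reads
\[
0 \to H^1(C^\bullet)/A_n \to H(n) \to B_n \to 0,
\]
with $A_n := ((\ker\beta \cap I^n M') + \alpha(L))/\alpha(L)$ the image of $H^1(K_n^\bullet) \to H^1(C^\bullet)$ and $B_n := (\beta(M) \cap I^n N')/I^n \beta(M')$ the kernel of $H^2(K_n^\bullet) \to H^2(C^\bullet)$.

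\medskip

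Applying the Artin--Rees lemma to $\ker\beta \subseteq M$ against $\{I^n M'\}$ and to $\beta(M) \subseteq N$ against $\{I^n N'\}$ yields a fixed $k$ such that, for all $n \geq k$,
\[
A_n = I^{n-k} A_k, \qquad B_n = I^{n-k} U/I^{n-k} D',
\]
where $U := \beta(M) \cap I^k N'$ and $D' := I^k \beta(M') \subseteq U$. Over the Rees algebra $\mathcal{R}(I) := R[It]$, the graded module $\bigoplus_m I^m U/I^m D'$ is a finitely generated graded $\mathcal{R}(I)$-module, being the quotient of the finitely generated Rees submodules $\bigoplus I^m U$ and $\bigoplus I^m D'$ of $U[t]$ and $D'[t]$; a parallel construction controls $H^1(C^\bullet)/I^m A_k$. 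Invoking the classical theorem that associated primes of the graded pieces of a finitely generated graded module over a Noetherian graded ring are eventually constant then gives stability of both $\ap(H^1(C^\bullet)/A_n)$ and $\ap(B_n)$. Stability of $\ap(H^1(C^\bullet)/A_n)$ may equivalently be viewed as a mild relative extension of Theorem~\ref{thm:BPrime}.

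\medskip

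The hard part is to upgrade these flanking stabilities to stability of $\ap H(n)$, since the short exact sequence above only yields the containments
\[
\ap(H^1(C^\bullet)/A_n) \subseteq \ap H(n) \subseteq \ap(H^1(C^\bullet)/A_n) \cup \ap B_n,
\]
and a prime $P \in \ap B_n$ lies in $\ap H(n)$ precisely when the connecting map $H^0_P(B_n) \to H^1_P(H^1(C^\bullet)/A_n)$ has a nontrivial kernel, which is not \emph{a priori} eventually constant in $n$. I would address this by assembling the $H(n)$ into a single finitely generated graded $\mathcal{R}(I)$-module whose degree-$(n-k)$ piece is $H(n)$ for $n \geq k$: a further application of Artin--Rees gives $\beta^{-1}(I^n N') = \ker\beta + I^{n-k}\beta^{-1}(I^k N')$ for large $n$, and $\alpha(L) + I^n M'$ is $\alpha(L)$ plus an $I$-stable piece, so after a shift and after replacing the ``constant'' summands $\ker\beta$ and $\alpha(L)$ by suitably chosen Rees-generated submodules, the graded module $\bigoplus_{n \geq k} H(n)$ becomes a finitely generated graded $\mathcal{R}(I)$-module. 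The classical graded-module stability theorem then applies directly to conclude. The main technical obstacle is carrying out this replacement cleanly: one must check that the ``constant'' parts are innocuous for associated primes, which is what makes the most naive realization of $\bigoplus H(n)$ as a Rees-type module fail to be finitely generated in the first place.
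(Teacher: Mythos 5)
Your reduction of $\bigh(n)$ to the short exact sequence $0 \to H^1(C^\bullet)/A_n \to \bigh(n) \to B_n \to 0$ and the Artin--Rees normalizations $A_n = I^{n-k}A_k$, $B_n = I^{n-k}U/I^{n-k}D'$ are correct, but the proof is not complete, and the gap is exactly where you place it. First, even the flanking stabilities are not fully justified by your mechanism: $\bigoplus_m H^1(C^\bullet)/I^m A_k$ (like $\bigoplus_m E/I^m E'$ for a submodule $E' \subseteq E$) is \emph{not} a finitely generated graded module over $\mathscr{R}(I) = \bigoplus_{n \geq 0} I^n$, so the ``classical graded-module stability theorem'' does not apply to it directly; this relative form of Brodmann's theorem already needs a separate Ratliff--Brodmann-type argument. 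Second, and more seriously, your proposed completion --- assembling $\bigoplus_{n \geq k} \bigh(n)$ into a finitely generated graded $\mathscr{R}(I)$-module ``after replacing the constant summands $\ker\beta$ and $\alpha(L)$ by suitably chosen Rees-generated submodules'' --- is asserted rather than proved, and as literally stated it is false: already in the degenerate case $\alpha = 0$, $\beta = 0$, $N' = N$, $M = M' = R$ one has $\bigh(n) = R/I^n$, and $\bigoplus_n R/I^n$ is not finitely generated over $\mathscr{R}(I)$ (finite generation would force $R/I^{n+1} = I\,(R/I^{n+1})$ for large $n$). The missing step --- showing that the ``constant'' part $\ker\beta$ (mod $\alpha(L)$) is innocuous for associated primes while the $I$-stable parts are governed by a Rees module --- is precisely the content of the key lemma this paper quotes as Theorem~\ref{thm:KWgen}: for submodules $U, V, W \subseteq T$ with $W \subseteq V$, the sets $\ap_R\bigl((U + I^n V)/I^n W\bigr)$ stabilize. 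Without a proof of that statement (or an equivalent), your argument reduces the theorem to an unproved claim of the same depth.

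For comparison, the route taken here (following Katz--West) bypasses the long exact sequence entirely: by Artin--Rees, $\beta(M) \cap I^n N' = I^{n-d}(\beta(M) \cap I^d N')$ for $n \geq d$, whence $\ker\beta_n = \bigl(\ker\beta + I^{n-d}\beta^{-1}(I^d N')\bigr)/I^n M'$ and $\im\alpha_n = (\alpha(L) + I^n M')/I^n M'$, so that inside $T = M/\alpha(L)$ one gets $\bigh(n) \cong (U + I^{n-d}V)/I^{n-d}W$ with $U = \bar{\ker\beta}$, $V = \bar{\beta^{-1}(I^d N')}$, $W = \bar{I^d M'}$; Theorem~\ref{thm:KWgen} then gives stability in one stroke (this is the proof of Corollary~\ref{cor:KWhom}). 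Your short-exact-sequence decomposition is a reasonable alternative framing, but it does not avoid the core difficulty of mixing a constant submodule with Rees-type filtrations, which is the actual theorem to be proved.
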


\begin{cor} \cite[Corollary~3.5]{KW} \label{cor:KW}
  Let $M' \subseteq M$ be a submodule. Then for any $i \geq 0$,
  the sets $\ap_R \tor_i^R (N, M/I^n M')$ and
  $\ap_R \ext_R^i (N, M/I^n M')$ stabilize.
\end{cor}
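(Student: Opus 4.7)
The corollary asks us to stabilize the associated primes of the derived functors $\tor_i^R(N,-)$ and $\ext_R^i(N,-)$ applied to the family $M/I^n M'$. Since Theorem~\ref{thm:KWhom} already handles the cohomology of a three-term complex of the special form $L/I^n L' \to M/I^n M' \to N/I^n N'$, the natural plan is to realize $\tor_i^R(N,M/I^n M')$ and $\ext_R^i(N,M/I^n M')$ as the homology (respectively cohomology) of such a three-term complex and then invoke Theorem~\ref{thm:KWhom} directly.

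\textbf{Step 1: set up a finitely generated free resolution.} Because $R$ is Noetherian and $N$ is finitely generated, we may fix a free resolution $P_\bullet \to N$ in which every $P_j$ is a finitely generated free $R$-module. This resolution is chosen once and for all, independently of $n$.

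\textbf{Step 2: handle Tor.} We have $\tor_i^R(N, M/I^n M') = H_i(P_\bullet \otimes_R (M/I^n M'))$. For each $j$, the right-exactness of tensor product gives the identification
\[
P_j \otimes_R (M/I^n M') \;\cong\; (P_j \otimes_R M) \big/ I^n (P_j \otimes_R M').
\]
Set $\widetilde{M}_j := P_j \otimes_R M$ and $\widetilde{M}_j' := P_j \otimes_R M'$, both finitely generated $R$-modules, with $\widetilde{M}_j' \subseteq \widetilde{M}_j$. The maps $P_{i+1} \to P_i \to P_{i-1}$ tensored with $M$ yield a complex
\[
\widetilde{M}_{i+1}/I^n \widetilde{M}_{i+1}' \longrightarrow \widetilde{M}_i/I^n \widetilde{M}_i' \longrightarrow \widetilde{M}_{i-1}/I^n \widetilde{M}_{i-1}'
\]
whose homology in the middle is $\tor_i^R(N, M/I^n M')$. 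The differentials carry the submodules $\widetilde{M}_j'$ into one another because they are induced by fixed $R$-linear maps of the $P_j$, so the hypotheses of Theorem~\ref{thm:KWhom} are satisfied. Stability of $\ap_R \tor_i^R(N, M/I^n M')$ follows.

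\textbf{Step 3: handle Ext.} Writing $P_j = R^{r_j}$, we get $\bighom_R(P_j, M/I^n M') \cong (M/I^n M')^{r_j} \cong M^{r_j}/I^n (M')^{r_j}$, where the second isomorphism uses that $I^n$ distributes over finite direct sums. Setting $L_j := M^{r_j}$ and $L_j' := (M')^{r_j}$, the cochain complex $\bighom_R(P_\bullet, M/I^n M')$ in degrees $i-1,i,i+1$ takes the form
\[
L_{i-1}/I^n L_{i-1}' \longrightarrow L_i/I^n L_i' \longrightarrow L_{i+1}/I^n L_{i+1}',
\]
with differentials induced by the $R$-linear maps $P_\bullet \to P_\bullet$, which again carry the $L_j'$ into one another. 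The middle cohomology is $\ext_R^i(N, M/I^n M')$, and another application of Theorem~\ref{thm:KWhom} completes the proof.

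\textbf{Expected obstacle.} The only subtlety is bookkeeping: one must make sure that the maps in the constructed three-term complexes really do respect the chosen submodules, so that Theorem~\ref{thm:KWhom} applies verbatim. Once the finitely generated free resolution has been chosen and the commutation of $I^n$ with direct sums and tensor products with free modules has been noted, both statements reduce immediately to Theorem~\ref{thm:KWhom}.
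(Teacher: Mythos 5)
Your argument is correct: resolving $N$ by a fixed resolution of finitely generated free modules, identifying $P_j \otimes_R (M/I^n M') \cong (P_j\otimes_R M)/I^n(P_j\otimes_R M')$ and $\bighom_R(P_j, M/I^n M') \cong M^{r_j}/I^n (M')^{r_j}$ (both valid since each $P_j$ is finite free, hence flat, so the ``submodules'' really are submodules and the differentials respect them), and then applying Theorem~\ref{thm:KWhom} to the three-term truncations at the relevant spot does prove both statements. Note, however, that the paper itself gives no proof of this corollary: it is quoted from Katz--West, and your argument is essentially their original derivation of Corollary~3.5 from Proposition~3.4 (the paper's Theorem~\ref{thm:KWhom}). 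Within this paper the statement is instead recovered as a special case of the general framework: $\tor_i^R(N,-)$ and $\ext_R^i(N,-)$ are coherent functors by Lemma~\ref{lem:Har}\ref{item:torext}, so Corollary~\ref{cor:coh} applies. The two routes are closely related --- the proof of Corollary~\ref{cor:coh} also proceeds by choosing finite free resolutions and reducing to Theorem~\ref{thm:KWgen}/Corollary~\ref{cor:KWhom} --- but your version is more elementary and self-contained for this particular pair of functors, whereas the coherent-functor route exhibits the corollary as an instance of a much broader stability theorem (Theorem~\ref{thm:main}), applicable to any coherent $F$ rather than just Tor and Ext.
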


A rather extensive introduction to results related to
Theorems~\ref{thm:BPrime} and \ref{thm:BSpread} can be found
in \cite{McA}. However, we will proceed in a different direction.
Our main goal is to relate the theorems to the following notions.

\begin{notn}
Let $R$ be a commutative ring and $M \in \bigmod(R)$. Then we let $h_M$
denote the functor $\bighom_R(M,-)$.
We let $\rlin$ denote the category of $R$-linear covariant functors
$F$ from $\fgmod(R)$ to itself.
\end{notn}

\begin{defn} \cite[page 53]{Har} \label{defn:coh}
Let $R$ be a Noetherian ring and $F \in \rlin$. We say that:
\begin{enumerate}[label=(\arabic*),align=left,leftmargin=*,nosep]
  \item $F$ is representable
  if $F \cong h_M$ for some
  $M \in \fgmod(R)$;
  \item $F$ is coherent if
  there exist $M,N \in \fgmod(R)$
  and an exact sequence
  $h_N \to h_M \to F \to 0$;
  \item $F$ is finitely generated
  if there exist $M \in \fgmod(R)$
  and an exact sequence
  $h_M \to F \to 0$.
\end{enumerate}
\end{defn}

\begin{rmk}
  Representable $\Rightarrow$
  coherent $\Rightarrow$
  finitely generated $\Rightarrow$
  $R$-linear
\end{rmk}

We can now state our main result, which will be proved in several steps in
Section~\ref{sec:stab}.

\begin{thm} \label{thm:main}
  Let $R$ be a Noetherian ring, $I,J$ ideals of $R$,
  $M \in \fgmod(R)$ and $F$ be a coherent functor.
  Then the sets $\ap_R F(M/I^n M)$, $\ap_R F(I^{n-1}M/I^n M)$
  and the values $\dep_J F(M/I^n M)$, $\dep_J F(I^{n-1}M/I^n M)$ stabilize.
\end{thm}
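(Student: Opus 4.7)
My plan is to promote the problem to a graded setting, where the classical fact that the graded components of a finitely generated graded module over a Noetherian graded ring have asymptotically stable associated primes and $J$-depth applies. The key starting observation is that a coherent functor $F$ extends verbatim to a functor on all $R$-modules via
\[
  F(X) = \coker\bigl(\bighom_R(N, X) \to \bighom_R(M, X)\bigr),
\]
and because $M$ and $N$ are finitely presented, this extension commutes with arbitrary direct sums.

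I would first handle the $I^{n-1}M/I^n M$ case. Consider $\gr_I M = \bigoplus_{n\geq 1} I^{n-1}M/I^n M$, a finitely generated graded module over the Noetherian graded ring $\gr_I R$. Since $\gr_I R$ acts on $\gr_I M$ by $R$-linear multiplication maps, applying $F$ endows
\[
  F(\gr_I M) = \bigoplus_n F\bigl(I^{n-1}M/I^n M\bigr)
\]
with a graded $\gr_I R$-module structure. A presentation $R^b \to R^a \twoheadrightarrow M$ realizes $\bighom_R(M, \gr_I M)$ as the kernel of a $\gr_I R$-linear map $(\gr_I M)^a \to (\gr_I M)^b$, hence as a $\gr_I R$-submodule of the finitely generated $\gr_I R$-module $(\gr_I M)^a$; since $\gr_I R$ is Noetherian, it is finitely generated. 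Thus $F(\gr_I M)$, being a $\gr_I R$-quotient of $\bighom_R(M,\gr_I M)$, is also finitely generated, and the graded stability theorem delivers the conclusions for both $\ap_R F(I^{n-1}M/I^n M)$ and $\dep_J F(I^{n-1}M/I^n M)$.

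Next, for the $M/I^n M$ case, I would apply $F$ to the short exact sequence
\[
  0 \to I^{n-1}M/I^n M \to M/I^n M \to M/I^{n-1}M \to 0.
\]
Because the category of coherent functors is abelian with enough projectives, $F$ admits satellite functors, producing a long exact sequence linking $F(I^{n-1}M/I^n M)$, $F(M/I^n M)$ and $F(M/I^{n-1}M)$. Writing $S$ for the eventual stable value of $\ap_R F(I^{n-1}M/I^n M)$, a Brodmann-style boundedness argument applied to this long exact sequence shows that for $n$ past the stabilization point, $S \subseteq \ap_R F(M/I^n M)$ (via the induced map from the leading subobject), while $\ap_R F(M/I^n M) \setminus S$ forms a non-increasing chain of subsets of a fixed finite set — forcing $\ap_R F(M/I^n M)$ itself to stabilize. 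The analogous reasoning for $\dep_J$ uses the $\ext$-characterization of depth in combination with the same long exact sequence.

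The hard part will be this last case: since $F$ need not be left or right exact, the connecting and satellite terms in the long exact sequence must be controlled, and one must lean on Hartshorne's theory of coherent functors to ensure that the Brodmann-type boundedness survives in this non-exact setting. The depth statement is more delicate still, as it requires translating the satellite-functor long exact sequence into the simultaneous vanishing of certain $\ext$ modules past a common threshold.
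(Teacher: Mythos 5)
Your handling of the $I^{n-1}M/I^nM$ half is essentially correct and is in fact the route the paper takes for that half: since $h_K$ commutes with arbitrary direct sums for finitely generated $K$, so does $F$, and your observation that $F\bigl(\bigoplus_n I^{n-1}M/I^nM\bigr)=\bigoplus_n F(I^{n-1}M/I^nM)$ is a finitely generated graded module over $\gr(I)$ (equivalently over the Rees algebra $\bigoplus_{n\geq 0}I^n$) is precisely the content of Corollary~\ref{cor:graded} and the unlabelled corollary following it; stability of $\ap_R$ of the graded components is Melkersson--Schenzel (Theorem~\ref{thm:MSgraded}), and the depth statement you call ``classical'' is what the paper proves as Corollary~\ref{cor:gradeddep} by a Brodmann-type induction, so citing it is acceptable.

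The genuine gap is the $M/I^nM$ half, for both $\ap_R$ and $\dep_J$, and you have flagged but not closed it. Brodmann's recursion needs two inclusions: $\ap_R(I^{n-1}M/I^nM)\subseteq\ap_R(M/I^nM)$, which holds because the first module is a \emph{submodule} of the second, and $\ap_R(M/I^nM)\subseteq\ap_R(I^{n-1}M/I^nM)\cup\ap_R(M/I^{n-1}M)$, which uses exactness in the middle. Neither survives the application of $F$: the map $F(I^{n-1}M/I^nM)\to F(M/I^nM)$ need not be injective (its kernel is hit by the satellite term), so your claimed inclusion $S\subseteq\ap_R F(M/I^nM)$ does not follow; and the image of that map is a \emph{quotient} of $F(I^{n-1}M/I^nM)$, whose associated primes need not lie in $S$, so the ``non-increasing modulo $S$'' claim is also unjustified --- quite apart from the facts that the satellite long exact sequence is only guaranteed exact for half-exact functors (a coherent functor need not be half exact) and that the terms $S_1F(M/I^{n-1}M)$ are themselves uncontrolled. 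Note also that mere confinement to a fixed finite set cannot force stability: Theorem~\ref{thm:osc} produces $R$-linear functors with $\ap_R F(R/I^n)$ oscillating arbitrarily inside $V(I)$, so any correct argument must exploit coherence more substantially than your sketch does. The paper's proof of this half is of a different nature: from a presentation $h_L\to h_K\to F\to 0$ and lifted free resolutions it identifies $F(M/I^nM)\cong\ker\beta_n^{*}/\alpha_n^{*}(\ker\gamma_n^{*})$, uses Artin--Rees to put $\ker\gamma_n^{*}$ in the form $(U+I^{n-c}V)/I^nW$, and then invokes the Katz--West stability theorem (Theorem~\ref{thm:KWgen}, via Corollary~\ref{cor:KWhom}) for the associated primes, together with the parallel depth induction of Theorem~\ref{thm:Bdepth} for $\dep_J F(M/I^nM)$; nothing in your proposal plays the role of this uniform Artin--Rees/Katz--West input.
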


\begin{rmk}
  Theorem~\ref{thm:main} gives an extension of Theorem~\ref{thm:KWhom} in the following sense.
  Using the notation in Theorem~\ref{thm:KWhom}, let $L=L'$, $M=M'$ and $N=N'$.
  Then Theorem~\ref{thm:KWhom} is an instance of Theorem~\ref{thm:main} by
  Lemma~\ref{lem:Har}\ref{item:complex} (cf.\ proof of Theorem~\ref{thm:midfin}).
  However, by \cite[Example 5.5]{Har}, not all coherent functors are of the form given by
  Lemma~\ref{lem:Har}\ref{item:complex}. A technical generalization of Theorem~\ref{thm:KWhom}
  is given by Corollary~\ref{cor:KWhom}.
\end{rmk}

A summary of the rest of the paper is as follows. In Section~\ref{sec:twoeg}, we consider
two covariant $R$-linear functors, the zeroth local cohomology functor $\Gamma_I$
where $I$ is an ideal of $R$, and the torsion functor $\tau_S$ where $S$ is a multiplicatively
closed subset of $R$. We show that in most cases, the functors $\id\!/\Gamma_I$ and
$\id\!/\tau_S$ are finitely generated but not coherent, while the functors $\Gamma_I$
and $\tau_S$ are not even finitely generated. However, if $F = \id\!/\Gamma_I$,
$\id\!/\tau_S$, $\Gamma_I$ or $\tau_S$, then whether or not $F$ is coherent, the sets
$\ap_R F(M/I^n M)$ and $\ap_R F(I^{n-1}M/I^n M)$ always stabilize.
In Section~\ref{sec:dd}, we consider the case
where $R$ is a Dedekind domain. We show that if $F$ is a finitely generated functor,
then the sets $\ap_R F(M/I^n M)$ stabilize. We give a family of non-finitely generated
functors $F$ such that the sets $\ap_R F(M/I^n M)$ do not stabilize. In Section~\ref{sec:midfin},
we consider a complex $\seq \colon A \to B \to C$ of $R$-modules where $B \in \fgmod(R)$
and the functor $F(-) = \bigh (\seq \otimes -)$, an example of which is the zeroth local
cohomology functor. We show that if $R$ is a one-dimensional Noetherian domain, then
the sets $\ap_R F(M/I^n M)$ stabilize.

\begin{ack}
  This paper forms part of the author's thesis at the University of Kansas.
  The author would like to express his gratitude to his advisor,
  Prof.\ Hailong Dao, for suggesting the topic of this paper and overseeing the research
  project, and to Prof.\ Daniel Katz, who filled in the historical details of this paper.
  The author also thanks Arindam Banerjee and William Sanders for conversations that enriched
  this project.
\end{ack}

\section{Proof of stability results}
\label{sec:stab}

In the section, we let $R$ be a Noetherian ring. All $R$-modules
will be finitely generated unless specified otherwise.
We will prove our main result, Theorem~\ref{thm:main}, which will follow from
Corollary~\ref{cor:coh}, Corollary~\ref{cor:dep} and Corollary~\ref{cor:graded}.
First, we need a slightly more general result than Theorem~\ref{thm:KWhom}.
We recall that the Theorem follows from an
even more general result.

\begin{thm} \cite[Proof of Proposition~3.4]{KW}
  \label{thm:KWgen}
  Let $I \subseteq R$ be an ideal, $T \in \fgmod(R)$ and
  $U,V,W$ submodules of $T$ such that $W \subseteq V$. Then
  the sets $\ap_R ((U+I^n V)/I^n W)$ stabilize.
\end{thm}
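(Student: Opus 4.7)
The plan is to deduce the statement from the standard fact that if $\mathcal M = \bigoplus_n \mathcal M_n$ is a finitely generated graded module over a Noetherian $\mathbb N$-graded ring $S$ with $S_0 = R$, then the sets $\ap_R \mathcal M_n$ are eventually constant in $n$. The natural candidate, the graded module $\bigoplus_n (U+I^n V)/I^n W$ over the Rees algebra $\mathcal R := R[It]$, is not itself finitely generated over $\mathcal R$, because the submodule $U$ contributes constantly in every degree. To handle this I would work with the short exact sequence
\[
0 \longrightarrow I^n V/I^n W \longrightarrow (U+I^n V)/I^n W \longrightarrow U/(U \cap I^n V) \longrightarrow 0
\]
coming from the inclusion $I^n V \subseteq U+I^n V$, and treat the two ends separately.

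\separate

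For the left-hand term, the Rees modules $\bigoplus_n I^n V$ and $\bigoplus_n I^n W$ are finitely generated graded $\mathcal R$-modules (generated in degree zero by $V$ and $W$), hence so is their quotient $\bigoplus_n I^n V/I^n W$, and $\ap_R(I^n V/I^n W)$ stabilizes by the standard graded-module theorem. For the right-hand term, Artin-Rees supplies a $k$ with $U \cap I^n V = I^{n-k}(U \cap I^k V) =: I^{n-k}U'$ for all $n \geq k$, so that $U/(U \cap I^n V) = U/I^{n-k}U'$ with $U' \subseteq U$; stability of $\ap_R(U/I^m U')$ is a small generalization of Brodmann's theorem and is covered by Corollary~\ref{cor:KW} with $i = 0$ and $N = R$.

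\separate

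The main obstacle is the combination step: a short exact sequence $0 \to A_n \to B_n \to C_n \to 0$ with both $\ap(A_n)$ and $\ap(C_n)$ stabilizing need not force $\ap(B_n)$ to stabilize, since a prime in $\ap(C_n) \setminus \ap(A_n)$ may lift to $B_n$ in an $n$-dependent fashion. I would resolve this by localizing at each $P$ in the (eventually stable) gap set. If $I \not\subseteq P$, then $I^n V_P = V_P$ and $I^n W_P = W_P$, so $\bigl((U+I^n V)/I^n W\bigr)_P = (U_P+V_P)/W_P$ is literally independent of $n$, and $P$ lies in $\ap$ of the middle term either for all large $n$ or for none. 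If $I \subseteq P$, I would pass to $R_P$ and argue via the long exact sequence for $\bighom_{R_P}(R_P/PR_P,-)$, using that the $(R_P/PR_P)$-dimensions of $\bighom$ and $\ext^1$ against the already-controlled submodule and quotient of the displayed sequence are $n$-th pieces of finitely generated graded modules over $\mathcal R_P$, so each stabilizes, and hence so does the membership of $P$ in $\ap$ of the middle term. In parallel I would try to exhibit $\bigoplus_n (U+I^n V)/I^n W$ directly as a finitely generated graded module over an enlarged Noetherian graded ring (such as $\mathcal R$ with a degree-one element adjoined to account for the constant contribution of $U$), which if successful would sidestep the localization analysis and give the conclusion immediately from the standard graded theorem.
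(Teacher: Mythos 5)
Note first that the paper does not actually prove this statement: it is imported verbatim from the proof of \cite[Proposition~3.4]{KW}, so your argument has to stand on its own, and as proposed it does not. One issue is circularity in the inputs. The stability of $\ap_R(U/I^m U')$ for a submodule $U'\subseteq U$, which you invoke for the quotient term via Corollary~\ref{cor:KW} with $i=0$, $N=R$, is not Brodmann's theorem (that is the case $U'=U$); it is literally a special case of the theorem you are proving (take $T=U$, $V=W=U'$), and in the literature Corollary~\ref{cor:KW} is itself deduced from \cite[Proposition~3.4]{KW}, i.e.\ from the very statement at hand. So your reduction sends the theorem to a special case of itself without supplying an independent proof of that case.

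The decisive gap, however, is the combination step at primes $P\supseteq I$, which is exactly the difficulty you flagged and which your fix restates rather than resolves. In the only problematic situation, $P\in\ap_R(C_n)\setminus\ap_R(A_n)$, one has $\bighom_{R_P}(\kappa(P),(B_n)_P)\cong\ker\bigl(\delta_n\colon \bighom_{R_P}(\kappa(P),(C_n)_P)\to\ext^1_{R_P}(\kappa(P),(A_n)_P)\bigr)$, and knowing the three $\kappa(P)$-dimensions of the end terms, even exactly for every $n$, says nothing about the rank of the connecting map $\delta_n$; so whether $P\in\ap_R(B_n)$ is not determined by the data you propose to control. Moreover, the finiteness input you invoke for the quotient term is unavailable: $\bigoplus_n \bighom_R(R/P,\,U/(U\cap I^nV))$ sits inside $\bigoplus_n U/(U\cap I^nV)$, which is \emph{not} a finitely generated graded module over the Rees algebra, for exactly the same reason the middle term is not (the constant contribution of $U$ obstructs generation in bounded degrees), so the claim that these dimensions are components of finitely generated graded modules is unjustified. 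Finally, the fallback idea of making $\bigoplus_n (U+I^nV)/I^nW$ finitely generated over $\mathcal{R}=R[It]$ with an extra degree-one variable acting as ``the identity on $U$'' does not get off the ground: there is no natural degree-raising map $(U+I^nV)/I^nW\to(U+I^{n+1}V)/I^{n+1}W$ restricting to the identity on the image of $U$, since $I^nW\not\subseteq I^{n+1}W$ (and $I^nV\not\subseteq U+I^{n+1}V$). So the proposal, as written, does not close; you would need either to reproduce the Katz--West argument itself or to find a genuinely different way to control how $U$ interacts with $I^nV$ and $I^nW$, not just the two ends of your exact sequence separately.
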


\begin{cor} \label{cor:KWhom}
  Consider the situation as in Theorem~\ref{thm:KWhom}.
  Let $c \in \nn$ and $L_1,L_2$ be submodules of $L$ such that
  $I^c L' \subseteq L_2$. For $n \geq c$, let $\bigh(n)$ denote
  the homology of the induced complex
  \[
    \frac{L_1 + I^{n-c}L_2}{I^n L'} \xrightarrow{\alpha_n}
    \frac{M}{I^n M'} \xrightarrow{\beta_n}
    \frac{N}{I^n N'}
  \]
  Then the sets $\ap_R \bigh(n)$ stabilize.
\end{cor}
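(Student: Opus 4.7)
The plan is to identify $\bigh(n)$ as a quotient of two finitely generated submodules of $M$ whose $I$-adic filtrations share a common exponent, and then directly invoke Theorem~\ref{thm:KWgen}.

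First I would unpack the homology. Since the common submodule $I^n M'$ of $M$ sits inside both $\beta^{-1}(I^n N')$ (representing $\ker \beta_n$) and $\alpha(L_1) + I^{n-c}\alpha(L_2) + I^n M'$ (representing $\im \alpha_n$), the third isomorphism theorem yields
\[
\bigh(n) \;\cong\; \frac{\beta^{-1}(I^n N')}{\alpha(L_1) + I^{n-c}\alpha(L_2) + I^n M'},
\]
with the containment of the denominator in the numerator following from $\beta \circ \alpha = 0$ and $\beta(I^n M') \subseteq I^n N'$.

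Next I would tame the numerator using the Artin--Rees lemma applied to the submodule $\beta(M) \cap N'$ of $N'$. Since the Artin--Rees exponent can be taken arbitrarily large, there is an integer $k \geq c$ such that $\beta(M) \cap I^n N' = I^{n-k}(\beta(M) \cap I^k N')$ for all $n \geq k$. Setting $Q := \beta^{-1}(\beta(M) \cap I^k N')$, a routine lifting argument then gives $\beta^{-1}(I^n N') = \ker \beta + I^{n-k} Q$ for $n \geq k$. Using $k \geq c$, the denominator rearranges as $\alpha(L_1) + I^{n-k} W$, where $W := I^{k-c}\alpha(L_2) + I^k M'$; thus both numerator and denominator are now $I^{n-k}$-filtrations added to fixed submodules of $M$.

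Finally, I would pass to $T := M/\alpha(L_1)$, which is legitimate because $\beta \circ \alpha = 0$ forces $\alpha(L_1) \subseteq \ker \beta$ and $\alpha(L_1)$ already sits in the denominator. Writing $\bar S$ for the image in $T$ of a submodule $S \subseteq M$, I obtain
\[
\bigh(n) \;\cong\; \frac{\overline{\ker \beta} + I^{n-k}\bar Q}{I^{n-k}\bar W},
\]
and the hypothesis $\bar W \subseteq \bar Q$ required by Theorem~\ref{thm:KWgen} is verified from $\alpha(L_2) \subseteq \ker \beta \subseteq Q$ and $I^k M' \subseteq Q$. Applying Theorem~\ref{thm:KWgen} with the substitution $m = n - k$ concludes the proof. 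The only bookkeeping obstacle is reconciling the three exponents $n-c$, $n$, and the Artin--Rees shift, handled by choosing $k$ at least as large as both $c$ and the Artin--Rees number.
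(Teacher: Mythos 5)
Your proposal is correct and follows essentially the same route as the paper: the paper likewise rewrites $\bigh(n)$ as $\bigl(\ker\beta + I^{n-d}\beta^{-1}(I^d N')\bigr)\big/\bigl(\alpha(L_1)+I^{n-d}(I^{d-c}\alpha(L_2)+I^d M')\bigr)$ via Artin--Rees applied to $\beta(M)\cap I^n N'$, then passes to $T=M/\alpha(L_1)$ and invokes Theorem~\ref{thm:KWgen} with exactly your choice of $U$, $V$, $W$ (your $Q$ and $W$ are the paper's $V$ and $W$ up to adding $\alpha(L_1)$, which is immaterial in $T$). The only cosmetic difference is your explicit index shift $m=n-k$, which of course does not affect stabilization.
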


\begin{proof}
  We follow \cite[Proof of Proposition~3.4]{KW}. By the Artin-Rees
  Lemma, there is $d \geq c$ such that for all $n \geq d$,
  $\beta(M) \cap I^n N' = I^{n-d}(\beta(M) \cap I^d N')$.
  Then for $n \geq d$, we have
  \begin{align*}
    \bigh(n) &= \frac{\ker(\beta_n)}{\im(\alpha_n)}\\
    &= \frac{\ker(\beta) + I^{n-d} (\beta^{-1}(I^d N'))}
    {\alpha(L_1) + I^{n-d}(I^{d-c}\alpha(L_2) + I^d M')}\ .
  \end{align*}
  The result then follows from Theorem~\ref{thm:KWgen} by letting
  \[
  \begin{aligned}[b]
    T &= \frac{M}{\alpha(L_1)}\ ,\\
    V &= \frac{\beta^{-1}(I^d N') + \alpha(L_1)}{\alpha(L_1)}
  \end{aligned}
  \qquad \text{and} \qquad
  \begin{aligned}[b]
    U &= \frac{\ker(\beta)}{\alpha(L_1)}\ ,\\
    W &= \frac{I^{d-c}\alpha(L_2) + I^d M' + \alpha(L_1)}{\alpha(L_1)}\ .
  \end{aligned} \qedhere
  \]
\end{proof}

Next, we recall some results from \cite{Har}.

\begin{lem} \label{lem:Har}
  \cite[Lemma~1.2, Examples~2.1--2.5]{Har}
  \begin{enumerate}[label=(\alph*),align=left,leftmargin=*,nosep]
  \item For any $M \in \fgmod(R)$ and $F \in \rlin$,
  there is a natural isomorphism $\nat_{\rlin}(h_M,F) \cong F(M)$
  given by $T \mapsto T_M(\id_M\!)$. \label{item:nat}
  \item Let $P_{\bullet}$ be a complex of finitely generated $R$-modules. Then
  for any $i \in \zln$, the functor $\bigh_i (P_{\bullet} \otimes -)$ is coherent.
  \label{item:complex}
  \item Let $M \in \fgmod(R)$. Then for any $i \geq 0$, the functors
  $\tor_i^R (M, -) \text{ and } \ext_R^i (M, -)$ are coherent.
  \label{item:torext}
  \end{enumerate}
\end{lem}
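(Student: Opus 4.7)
For part \ref{item:nat}, the plan is to apply the Yoneda Lemma. I would define the inverse to $T \mapsto T_M(\id_M)$ as follows: given $x \in F(M)$, send $x$ to the natural transformation $T^x \colon h_M \to F$ whose component at $N \in \fgmod(R)$ sends $f \in \bighom_R(M,N)$ to $F(f)(x)$. Naturality in $N$ is immediate from the functoriality of $F$, and $R$-linearity of each $T^x_N$ follows from the $R$-linearity of $F$. The two constructions are mutually inverse: for any natural $T$ and any $f \colon M \to N$, the naturality square for $T$ evaluated at $f$ forces $T_N(f) = F(f)(T_M(\id_M))$, showing $T = T^{T_M(\id_M)}$; conversely $T^x_M(\id_M) = F(\id_M)(x) = x$. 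Naturality of the resulting bijection in $F$ is a routine diagram chase.

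For part \ref{item:complex}, I would first establish the two-term case: if $P \in \fgmod(R)$, then $-\otimes_R P \in \rlin$ is coherent. A finite presentation $R^a \xrightarrow{\phi} R^b \to P \to 0$, tensored with $N$, produces an exact sequence $N^a \to N^b \to N \otimes P \to 0$; the identification $N^k \cong h_{R^k}(N)$ (natural in $N$) then exhibits $-\otimes_R P$ as a cokernel $h_{R^a} \to h_{R^b} \to (-\otimes_R P) \to 0$. For the homology statement, I would truncate $P_\bullet$ to the three relevant terms $P_{i+1} \xrightarrow{d_{i+1}} P_i \xrightarrow{d_i} P_{i-1}$ and realize $\bigh_i(P_\bullet \otimes -) = \ker(d_i \otimes -)/\im(d_{i+1} \otimes -)$ as the quotient of two subfunctors of $-\otimes_R P_i$, then appeal to the closure of the category of coherent functors under kernels, images, and cokernels of morphisms.

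For part \ref{item:torext}, I would deduce coherence from part \ref{item:complex}. Since $R$ is Noetherian and $M$ is finitely generated, $M$ admits a resolution $F_\bullet \to M$ by finitely generated free modules, so $\tor_i^R(M,N) = \bigh_i(F_\bullet \otimes N)$ and part \ref{item:complex} applies directly. For $\ext_R^i(M,-)$, the same resolution gives $\ext_R^i(M,-) = \bigh^i(\bighom_R(F_\bullet,-))$; each $\bighom_R(F_j,-) \cong h_{F_j}$ is representable, so the same kernel/image/cokernel closure argument yields coherence of the cohomology in each degree.

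The main technical obstacle is the assertion that kernels of morphisms between coherent functors are themselves coherent. The cokernel case is a direct Yoneda computation, but the kernel case requires a careful tracking through presentations, essentially constructing a second syzygy in the functor category. This is the technical heart of the theory developed in \cite{Har}, and for the purposes of this paper I would simply invoke the relevant closure properties of coherent functors rather than reproduce the argument.
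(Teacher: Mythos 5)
Your argument is correct and follows essentially the same route as the source this lemma cites: the paper gives no proof of its own (it simply quotes \cite[Lemma~1.2, Examples~2.1--2.5]{Har}), and your sketch --- the Yoneda computation for \ref{item:nat}, coherence of $-\otimes_R P$ from a finite presentation via $h_{R^a} \to h_{R^b}$, and then kernels/images/cokernels for homology, Tor and Ext --- is exactly Hartshorne's treatment. The closure property you invoke as a black box is precisely \cite[Theorem~1.1(a)]{Har}, which this paper itself quotes later in Section~\ref{sec:twoeg}, so treating it as known is consistent with the paper's level of detail.
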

\vspace{-12pt}

We then obtain the following generalization of the first half
of Theorem~\ref{thm:BPrime}. By Lemma~\ref{lem:Har}\ref{item:torext},
Corollary~\ref{cor:coh} may also be viewed as a generalization of
Corollary~\ref{cor:KW}.

\begin{cor} \label{cor:coh}
  Let $F$ be a coherent functor, $M \in \fgmod(R)$, $M'$ be a submodule
  of $M$ and $I \subseteq R$ an ideal.
  Then the sets $\ap_R F( M/I^n M' )$ stabilize.
\end{cor}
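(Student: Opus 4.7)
The plan is to realize $F(M/I^n M')$ concretely as a module of the form handled by Theorem~\ref{thm:KWgen}, and then invoke that theorem. Since $F$ is coherent, there is an exact sequence $h_{N_2} \xrightarrow{\alpha} h_{N_1} \to F \to 0$ with $N_1, N_2 \in \fgmod(R)$; by Lemma~\ref{lem:Har}(a), $\alpha$ comes from some $\psi \colon N_1 \to N_2$, so for every $X$,
\begin{equation*}
  F(X) \;=\; \coker\bigl(\psi^* \colon \bighom_R(N_2, X) \to \bighom_R(N_1, X)\bigr).
\end{equation*}
I would fix finite free presentations $R^{b_i} \xrightarrow{B_i} R^{a_i} \to N_i \to 0$ for $i = 1, 2$, together with a lift $\tilde\psi \colon R^{a_1} \to R^{a_2}$ of $\psi$ and a compatible map $R^{b_1} \to R^{b_2}$. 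Writing $\pi_i \colon M^{a_i} \to M^{b_i}$ for the map induced by $B_i^T$, the standard identification gives $\bighom_R(N_i, M/I^n M') \cong \pi_i^{-1}\bigl(I^n (M')^{b_i}\bigr)\big/I^n (M')^{a_i}$ inside $M^{a_i}/I^n (M')^{a_i}$, and under these identifications $\psi^*$ is induced by $\tilde\psi^T$.

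The key step is an Artin-Rees rewriting of these preimages. I would choose $d \geq 0$ with $\im \pi_i \cap I^n (M')^{b_i} = I^{n-d}\bigl(\im \pi_i \cap I^d (M')^{b_i}\bigr)$ for all $n \geq d$ and $i = 1, 2$. A short preimage argument — given $\pi_i(m) = \sum_j r_j s_j$ with $r_j \in I^{n-d}$ and $s_j \in \im \pi_i \cap I^d (M')^{b_i}$, lift each $s_j$ into $M^{a_i}$ and absorb the error into $\ker \pi_i$ — then yields
\begin{equation*}
  \pi_i^{-1}\bigl(I^n (M')^{b_i}\bigr) \;=\; U_i + I^{n-d} V_i \qquad (n \geq d),
\end{equation*}
where $U_i := \ker \pi_i$ and $V_i := \pi_i^{-1}\bigl(\im \pi_i \cap I^d (M')^{b_i}\bigr)$ are fixed submodules of $M^{a_i}$. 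Substituting into the cokernel description and collecting $I^{n-d}$ factors will give, for $n \geq d$,
\begin{equation*}
  F(M/I^n M') \;\cong\; \frac{U_1 + I^{n-d} V_1}{\tilde\psi^T U_2 + I^{n-d}\bigl(\tilde\psi^T V_2 + I^d (M')^{a_1}\bigr)}.
\end{equation*}

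To finish, the commutativity of the chosen lift forces $\tilde\psi^T U_2 \subseteq U_1$ and $W := \tilde\psi^T V_2 + I^d (M')^{a_1} \subseteq V_1$. Passing to $T := M^{a_1}/\tilde\psi^T U_2$ and writing $\bar U_1, \bar V_1, \bar W$ for the images in $T$, the display becomes $F(M/I^n M') \cong (\bar U_1 + I^{n-d} \bar V_1)/I^{n-d} \bar W$ with $\bar W \subseteq \bar V_1$, which is precisely a module of the form in Theorem~\ref{thm:KWgen}. Applying that theorem (after the shift $n \mapsto n - d$) delivers the desired stability of $\ap_R F(M/I^n M')$. The main technical obstacle is the Artin-Rees identification $\pi_i^{-1}(I^n (M')^{b_i}) = U_i + I^{n-d} V_i$; once it is in hand, verifying $\tilde\psi^T U_2 \subseteq U_1$ and $W \subseteq V_1$ is routine bookkeeping from the commutative square defining $\tilde\psi$.
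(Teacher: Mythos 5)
Your proposal is correct and follows essentially the same path as the paper: present the coherent $F$ via a map of finitely generated modules (Lemma~\ref{lem:Har}), lift to free presentations, apply $\bighom_R(-,M/I^nM')$, use Artin--Rees to rewrite the resulting kernels/preimages with the $I^{n-d}$ factor pulled out, and conclude by the Katz--West result. The only cosmetic difference is that you reduce directly to Theorem~\ref{thm:KWgen} by applying Artin--Rees on both sides and quotienting by $\tilde\psi^T U_2$, which amounts to inlining the paper's intermediate step, Corollary~\ref{cor:KWhom}.
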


\begin{proof}
  Let $F$ be given by $h_L \to h_K \to F \to 0$. By
  Lemma~\ref{lem:Har}\ref{item:nat},
  the map $h_L \to h_K$ arises from a map $f \colon K \to L$.
  Choose free resolutions of $K$ and $L$ and a lift of $f$ such that
  the following diagram commutes.
  
  \begin{equation} \label{eqn:lift}
    \begin{split}
    \xymatrix{
      R^{\oplus k_1} \ar[r] \ar[d]^{\beta} & R^{\oplus \ell_1} \ar[d]^{\gamma}\\
      R^{\oplus k_0} \ar[r]^{\alpha} \ar[d]  & R^{\oplus \ell_0} \ar[d]\\
      K \ar[r]^f \ar[d] & L \ar[d] \\
      0          & 0
    }
    \end{split}
  \end{equation}
  Apply $\bighom_R(-,M/I^n M')$ to get the commutative diagram
  
  \[
    \xymatrix{
    \ds \frac{M^{\oplus \ell_1}}{I^n \left((M')^{\oplus \ell_1}\right)}
    \ar[r] &
    \ds \frac{M^{\oplus k_1}}{I^n \left((M')^{\oplus k_1}\right)}\\
    \ds \frac{M^{\oplus \ell_0}}{I^n \left((M')^{\oplus \ell_0}\right)}
    \ar[u]_{\gamma_n^{*}} \ar[r]^{\alpha_n^{*}} &
    \ds \frac{M^{\oplus k_0}}{I^n \left((M')^{\oplus k_0}\right)}
    \ar[u]_{\beta_n^{*}}\\
    h_L\left( \ds \frac{M}{I^n M'} \right)
    \ar[u] \ar[r]^{f_n^{*}} &
    h_K\left( \ds \frac{M}{I^n M'} \right)
    \ar[r] \ar[u] &
    F \left( \ds \frac{M}{I^n M'} \right)
    \ar[r] & 0\\
    0 \ar[u] &  0 \ar[u]
    }
  \]
  where $f_n^{*},\alpha_n^{*},\beta_n^{*},\gamma_n^{*}$ are
  induced by $f,\alpha,\beta,\gamma$ respectively. Then we have
  \[
    F\left(\frac{M}{I^n M'}\right)
    \cong \frac{\ker \beta_n^{*}}
          {\alpha_n^{*} \left( \ker \gamma_n^{*} \right)}\ .
  \]
  Similarly, we apply $\bighom_R(-,M)$ to (\ref{eqn:lift})
  to get maps $\alpha^{*},\beta^{*},\gamma^{*}$
  induced by $\alpha,\beta,\gamma$ respectively. Let
  $A = M^{\oplus \ell_0}$, $A' = (M')^{\oplus \ell_0}$ and
  $B' = (M')^{\oplus \ell_1}$. As in the
  proof of Corollary~\ref{cor:KWhom}, there is $c \in \nn$
  such that $\gamma^{*}(A) \cap I^n B'
  = I^{n-c} (\gamma^{*}(A) \cap I^c B')$
  for all $n \geq c$, and hence
  \[
    \ker (\gamma_n^{*}) =
    \frac{\ker (\gamma^{*}) + I^{n-c}
    \left( \left( \gamma^{*} \right)^{-1} (I^c B') \right)}
    {I^n A'}\ .
  \]
  The result then follows from Corollary~\ref{cor:KWhom}.
\end{proof}

We next generalize the first half of Theorem~\ref{thm:BSpread}
along similar lines.

\begin{notn}
Let $T,U,V,W$ be as in Theorem~\ref{thm:KWgen}.
We let $T_n = (T,U,V,W)_n = (U+I^n V)/I^n W$.
\end{notn}

\begin{rmk}
Let $L$ be an ideal of $R$. For a submodule $S$ of $T$, we let
$\bar{S}$ be the image of $S$ under the natural projection
$T \to T/LU$. Then we have
\begin{align*}
  \frac{T_n}{L T_n}
  &= \frac{U + I^n V}{LU + LI^n V + I^n W}\\
  &= \frac{\bar{U} + I^n \bar{V}}{LI^n \bar{V} + I^n \bar{W}}\\
  &= (\,\bar{T},\bar{U},\bar{V},\bar{LV+W}\,)_n
\end{align*}
\end{rmk}

\begin{thm} \label{thm:Bdepth}
The values $\dep_J T_n$ stabilize.
\end{thm}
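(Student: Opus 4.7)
The plan is to adapt Brodmann's classical argument for depth stability to the family $T_n = (U + I^n V)/I^n W$. Two ingredients drive everything: (i) the stability of $\ap_R T_n$ from Theorem~\ref{thm:KWgen}, which produces a single element $x \in J$ that is regular on $T_n$ for all large $n$ simultaneously; and (ii) the observation recorded in the Remark immediately above, namely that $T_n/LT_n$ is again a sequence of the form $(T',U',V',W')_n$. Together these allow one to drop $J$-depth by a regular element while remaining inside the class of modules that Theorem~\ref{thm:KWgen} applies to.

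I would prove by induction on $k$ the sharper statement: \emph{for every sequence $(T,U,V,W)_n$ and every $k \geq 0$, the truth value of ``$\dep_J T_n \geq k$'' is eventually constant in $n$.} The cases $k=0$ and $k=1$ follow directly from Theorem~\ref{thm:KWgen}: whether $T_n = 0$ is detected by whether $\ap_R T_n = \emptyset$, and ``$\dep_J T_n \geq 1$'' says additionally that $J$ is contained in no $\mathfrak{p} \in \ap_R T_n$; both features stabilize.

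For the inductive step from $k$ to $k+1$, I may assume the stabilized value of ``$\dep_J T_n \geq k$'' is true (otherwise the next level trivially stabilizes to false). Let $A$ be the eventual value of $\ap_R T_n$. Then $J \not\subseteq \mathfrak{p}$ for every $\mathfrak{p} \in A$, and prime avoidance yields a single $x \in J$ lying outside every $\mathfrak{p} \in A$. Such an $x$ is $T_n$-regular for all large $n$, so $\dep_J T_n \geq k+1$ if and only if $\dep_J(T_n/xT_n) \geq k$. By the Remark with $L = (x)$, we have $T_n/xT_n = (\bar T, \bar U, \bar V, \overline{xV+W})_n$, another sequence of the required kind, and the inductive hypothesis applied to it delivers stability at level $k$, hence stability of ``$\dep_J T_n \geq k+1$''.

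To promote stability of each level to stability of the value $\dep_J T_n$ itself, I need a uniform upper bound on $\dep_J T_n$ when it is finite. The cases $T_n = 0$ and $JT_n = T_n$ (the latter equivalent to $T_n/JT_n = 0$, which is another sequence of the form $(T',U',V',W')_n$ by the Remark with $L = J$) both stabilize and cover the possibility $\dep_J T_n = \infty$; outside these cases $\dep_J T_n \leq \dim T_n \leq \dim T$, a bound independent of $n$. Choosing $n$ past the stabilization points of ``$\dep_J T_n \geq k$'' for $k \leq \dim T + 1$ then forces $\dep_J T_n$ to be constant. The main obstacle in all of this is purely the verification that successively killing a regular element and, if needed, modding out by $J$, both preserve membership in the class of sequences handled by Theorem~\ref{thm:KWgen}; this is precisely what the Remark above the theorem is designed to deliver, after which the induction runs on autopilot.
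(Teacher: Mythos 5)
Your proof is correct and is essentially the paper's argument: both rest on Theorem~\ref{thm:KWgen} to stabilize $\ap_R T_n$, prime avoidance to produce one $x\in J$ regular on $T_n$ for all large $n$, and the Remark to recognize $T_n/xT_n$ and $T_n/JT_n$ as sequences of the same form, the only difference being bookkeeping (you induct on the level $k$ and then need a uniform bound, whereas the paper inducts on $\liminf_n \dep_J T_n$). One small patch: for a general Noetherian ring $\dim T$ may be infinite, so replace your uniform bound by $\dep_J T_n \leq \mu(J)$, which holds whenever $T_n \neq JT_n$ (e.g.\ by depth sensitivity of the Koszul complex on a generating set of $J$), after which your promotion step goes through unchanged.
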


\begin{proof}
First, suppose that $T_n/JT_n= (\,\bar{T},\bar{U},\bar{V},\bar{JV+W}\,)_n
= 0$ for infinitely many $n$. Then by Theorem~\ref{thm:KWgen},
we see that $\ap_R \bar{T}_n = \emptyset$ for large $n$. So
for all large $n$, we have $T_n/JT_n = 0$ and hence $\dep_J T_n
= \infty$. Hence we may assume that $T_n \neq JT_n$ for large $n$.
\separate

The rest of the proof is the same as that in \cite[Theorem~2(i)]{BSpread}.
We let $h_T = \liminf_{n \to \infty} \dep_J(T_n)$, $\ell_T =
\lim_{n \to \infty} \dep_J(T_n)$ if such exists, and prove by induction
on $h_T$ that $\ell_T = h_T$. Suppose that $h_T = 0$. Then $J \subseteq
\{ r \in P \mid P \in \ap_R T_n \}$ for infinitely many $n$.
By Theorem~\ref{thm:KWgen}, we have $J \subseteq \{ r \in P \mid P \in
\ap_R T_n \}$ for all large $n$, so $\ell_T = h_T = 0$.
\separate

Now suppose that $h_T > 0$. Then by Theorem~\ref{thm:KWgen}, there
is $x \in J$ such that $x \notin \{ r \in P \mid P \in \ap_R T_n\}$
for all large $n$. Writing $T_n/xT_n = (\, \bar{T},
\bar{U},\bar{V},\bar{xV + W}\,)_n$, we have
$\dep_J \bar{T}_n = \dep_J T_n - 1$ for all large $n$.
Hence $h_{\bar{T}} = h_T - 1$. By induction, we have
$\ell_{\bar{T}} = h_{\bar{T}}$, so $\ell_T = \ell_{\bar{T}} + 1
= h_T$.
\end{proof}

\begin{cor}
  Let $J \subseteq R$ be an ideal. Consider the situation as in
  Corollary~\ref{cor:KWhom} with the complexes
  \[
    \frac{L_1 + I^{n-c}L_2}{I^n L'} \xrightarrow{\alpha_n}
    \frac{M}{I^n M'} \xrightarrow{\beta_n}
    \frac{N}{I^n N'}
  \]
  and $\bigh(n)$ denoting the homology of the complex. Then the
  values $\dep_J \bigh(n)$ stabilize.
\end{cor}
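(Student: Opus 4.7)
My plan is to apply Theorem~\ref{thm:Bdepth} to the modules $\bigh(n)$, using the explicit presentation of $\bigh(n)$ derived during the proof of Corollary~\ref{cor:KWhom}. The key observation is that that proof does more than verify the hypotheses of Theorem~\ref{thm:KWgen}: for $n$ at least the Artin--Rees constant $d \geq c$ chosen there, it produces an honest isomorphism
\[
\bigh(n) \;\cong\; \frac{U + I^{n-d} V}{I^{n-d} W}
\]
with $T = M/\alpha(L_1)$, $U = \ker(\beta)/\alpha(L_1)$, $V = (\beta^{-1}(I^d N') + \alpha(L_1))/\alpha(L_1)$, and $W = (I^{d-c}\alpha(L_2) + I^d M' + \alpha(L_1))/\alpha(L_1)$, where the quotient $\ker(\beta)/\alpha(L_1)$ makes sense because $\beta\alpha = 0$.

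The first thing I would verify is the hypothesis $W \subseteq V$ required by Theorem~\ref{thm:Bdepth}. This is a short check: $I^d M' \subseteq \beta^{-1}(I^d N')$ because $\beta(M') \subseteq N'$, and $\alpha(L_2) \subseteq \ker(\beta) \subseteq \beta^{-1}(I^d N')$ because $\alpha,\beta$ form a complex. Given this, $\bigh(n)$ coincides, for $n \geq d$, with the module $T_{n-d}$ in the notation of Theorem~\ref{thm:Bdepth}. Applying that theorem yields that $\dep_J T_m$ is eventually constant in $m$, and re-indexing by $n = m+d$ gives the desired stability of $\dep_J \bigh(n)$.

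I do not anticipate any substantial obstacle: all the real work has already been carried out upstream. The proof of Corollary~\ref{cor:KWhom} provides exactly the presentation of $\bigh(n)$ we need, and Theorem~\ref{thm:Bdepth} upgrades the stability of associated primes from Theorem~\ref{thm:KWgen} to a stability of depth on precisely this family of modules. The current corollary is then just a matter of assembling these two inputs.
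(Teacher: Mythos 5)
Your proposal is correct and is exactly the argument the paper intends: the corollary is stated without proof because, as you observe, the proof of Corollary~\ref{cor:KWhom} already exhibits $\bigh(n)$ for $n \geq d$ as the module $(U+I^{n-d}V)/I^{n-d}W$ with the same $T,U,V,W$, so Theorem~\ref{thm:Bdepth} applies directly after re-indexing. Your verification that $W \subseteq V$ (and that $\alpha(L_1) \subseteq \ker\beta$) is a worthwhile detail the paper leaves implicit, but it does not change the route.
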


\begin{cor} \label{cor:dep}
  Let $F$ be a coherent functor, $M \in \fgmod(R)$, $M'$ be a submodule
  of $M$ and $I,J$ be ideals of $R$.
  Then the values $\dep_J F( M/I^n M' )$ stabilize.
\end{cor}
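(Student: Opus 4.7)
The plan is to follow exactly the structure of the proof of Corollary~\ref{cor:coh}, replacing the appeal to Corollary~\ref{cor:KWhom} at the final step with its depth analogue (the unlabeled corollary appearing immediately before this one). Nothing new needs to be done on the functor side; the depth input is what Theorem~\ref{thm:Bdepth} provides.

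First I would take a presentation $h_L \to h_K \to F \to 0$; by Lemma~\ref{lem:Har}\ref{item:nat} this arises from a morphism $f \colon K \to L$, which I lift to free resolutions to produce the commutative diagram~(\ref{eqn:lift}). Applying $\bighom_R(-, M/I^n M')$ and reading off the cokernel of the induced map $h_L(M/I^n M') \to h_K(M/I^n M')$ gives, exactly as in the proof of Corollary~\ref{cor:coh},
\[
  F\!\left(\frac{M}{I^n M'}\right) \cong \frac{\ker \beta_n^{*}}{\alpha_n^{*}(\ker \gamma_n^{*})}.
\]

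Next I would apply the Artin--Rees lemma to the image of $\gamma^{*} \colon M^{\oplus \ell_0} \to M^{\oplus \ell_1}$ in order to rewrite $\ker(\gamma_n^{*})$ in the form
\[
  \ker(\gamma_n^{*}) = \frac{\ker(\gamma^{*}) + I^{n-c}\bigl((\gamma^{*})^{-1}(I^c B')\bigr)}{I^n A'}
\]
for all $n \geq c$, with $A = M^{\oplus \ell_0}$, $A' = (M')^{\oplus \ell_0}$ and $B' = (M')^{\oplus \ell_1}$, just as in Corollary~\ref{cor:coh}. This identifies $F(M/I^n M')$ with the homology $\bigh(n)$ of a three-term complex of exactly the form treated in Corollary~\ref{cor:KWhom}.

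Finally I would invoke the depth version of Corollary~\ref{cor:KWhom} (stated immediately above this corollary and proved by combining the same Artin--Rees reduction with Theorem~\ref{thm:Bdepth}) to conclude that $\dep_J \bigh(n)$, and hence $\dep_J F(M/I^n M')$, is eventually constant. There is essentially no new obstacle: the structural identification of $F(M/I^n M')$ as the homology of a complex of the Artin--Rees type was already carried out for Corollary~\ref{cor:coh}, and the only substantive input needed for the depth statement is the induction on $\liminf_n \dep_J(T_n)$ performed inside Theorem~\ref{thm:Bdepth}.
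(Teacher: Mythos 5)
Your proposal is correct and is exactly the argument the paper intends: the paper leaves this corollary without a written proof precisely because it follows by repeating the identification $F(M/I^n M') \cong \ker \beta_n^{*}/\alpha_n^{*}(\ker \gamma_n^{*})$ from the proof of Corollary~\ref{cor:coh} and then invoking the depth analogue of Corollary~\ref{cor:KWhom} (the unlabeled corollary resting on Theorem~\ref{thm:Bdepth}) in place of Corollary~\ref{cor:KWhom}. No gaps; your Artin--Rees reduction of $\ker(\gamma_n^{*})$ and the final appeal to the depth corollary match the paper's route step for step.
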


In order to generalize the rest of Theorems~\ref{thm:BPrime}
and \ref{thm:BSpread}, we let $S = \bigoplus_{n \geq 0} R_n$
be a Noetherian $R$-algebra generated in degree 1 with
$R_0 = R$. We will use a result from \cite{MS}.

\begin{thm} \label{thm:MSgraded}
\cite[Lemma~2.1]{MS}
Let $M = \bigoplus_{n \in \zln} M_n$ be a finitely generated
graded $S$-module. Then the sets $\ap_R M_n$ stabilize.
\end{thm}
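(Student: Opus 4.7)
The plan is to bound $\bigcup_n \ap_R(M_n)$ by a finite set of candidate primes and then, for each candidate, decide membership in $\ap_R(M_n)$ by an eventual vanishing/non-vanishing dichotomy on graded pieces of an auxiliary module. Using the standard prime filtration of $M$ as a graded $S$-module, there is a chain $0 = M^{(0)} \subset \cdots \subset M^{(r)} = M$ of graded $S$-submodules with $M^{(i)}/M^{(i-1)} \cong (S/P_i)(d_i)$ for graded primes $P_i$ of $S$; taking the degree $n$ piece gives $\ap_R(M_n) \subseteq \bigcup_i \ap_R((S/P_i)_{n+d_i})$. Since $S/P_i$ is a domain into which $R/(P_i \cap R)$ embeds, each $(S/P_i)_m$ is torsion-free over $R/(P_i \cap R)$, so its $R$-associated primes, when it is nonzero, reduce to $\{P_i \cap R\}$. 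Hence $\bigcup_n \ap_R(M_n) \subseteq \{\mathfrak{p}_1, \ldots, \mathfrak{p}_r\}$, where $\mathfrak{p}_i := P_i \cap R$.

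Next, for each fixed candidate $\mathfrak{p} := \mathfrak{p}_i$, I would show that the truth of $\mathfrak{p} \in \ap_R(M_n)$ is eventually constant in $n$. By general properties of associated primes under localization, $\mathfrak{p} \in \ap_R(M_n)$ iff $(0 :_{M_n} \mathfrak{p})_{\mathfrak{p}} \neq 0$. The submodule $N := (0 :_M \mathfrak{p})$ is a finitely generated graded $S$-submodule of $M$ annihilated by $\mathfrak{p} S$, so after localizing at $\mathfrak{p}$, $N_{\mathfrak{p}}$ is a finitely generated graded module over $T := S_{\mathfrak{p}}/\mathfrak{p} S_{\mathfrak{p}}$, which is a Noetherian graded algebra generated in degree $1$ over the field $k(\mathfrak{p}) = T_0$. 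The problem thus reduces to showing that $(N_{\mathfrak{p}})_n \neq 0$ stabilizes.

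The technical heart is then the following lemma: for any finitely generated graded module $N'$ over a Noetherian graded algebra $T = \bigoplus_{n \geq 0} T_n$ generated in degree $1$ over a field $T_0$, either $N'_n = 0$ for all $n \gg 0$, or $N'_n \neq 0$ for all $n \gg 0$. I would prove this by a prime filtration of $N'$ over $T$, reducing to $N' = T/Q$ for a graded prime $Q$ of $T$: if $T_1 \subseteq Q$ then $T/Q = T_0$ and $(T/Q)_m = 0$ for $m \geq 1$; otherwise pick $0 \neq y \in (T/Q)_1$, and since $T/Q$ is a domain, $y^m \neq 0$ in $(T/Q)_m$ for every $m \geq 0$.

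I expect the main obstacle to be a conceptual one: the prime filtration of $M$ provides only an upper bound on $\ap_R(M_n)$, not the set itself, so stability of the bound does not directly imply stability of $\ap_R(M_n)$. The crucial move is the $\mathfrak{p}$-torsion reformulation, which converts the question into an eventual (non)vanishing of graded pieces, settled by the elementary domain argument above. Combining the finite candidate set with the per-prime dichotomy then yields stability of $\ap_R(M_n)$.
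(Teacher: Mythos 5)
The paper offers no proof of this statement --- it is quoted verbatim from Melkersson--Schenzel \cite[Lemma~2.1]{MS} --- so there is no internal argument to compare against; judged on its own, your proof is correct and self-contained, and it is essentially the standard argument for this lemma. You correctly treat the graded prime filtration of $M$ only as giving the finite upper bound $\bigcup_n \ap_R(M_n) \subseteq \{P_1\cap R,\dots,P_r\cap R\}$ (the torsion-freeness of each graded piece $(S/P_i)_m$ over $(S/P_i)_0 = R/(P_i\cap R)$ does give $\ap_R((S/P_i)_m)\subseteq\{P_i\cap R\}$), and you do not try to read $\ap_R(M_n)$ itself off the filtration. The per-prime step is also sound: the criterion $\mathfrak{p}\in\ap_R(M_n) \Leftrightarrow (0:_{M_n}\mathfrak{p})_{\mathfrak{p}}\neq 0$ is valid, $(0:_M\mathfrak{p})$ is a finitely generated graded $S$-submodule killed by $\mathfrak{p}$, localization at the degree-zero set $R\setminus\mathfrak{p}$ commutes with taking graded components, and $T=S_{\mathfrak{p}}/\mathfrak{p}S_{\mathfrak{p}}$ is indeed a Noetherian standard graded algebra over the field $k(\mathfrak{p})$. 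Your dichotomy lemma over $T$ is proved correctly; the only point worth making explicit when writing it up is the (easy) remark that the dichotomy passes through the prime filtration of $N'$ because $N'_n\neq 0$ as soon as any filtration factor is nonzero in degree $n$, while $N'_n=0$ once all factors vanish there. Combining the finite candidate set with the per-prime eventual constancy then yields the stated stability, so the proposal legitimately fills in the proof that the paper delegates to \cite{MS}.
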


\begin{cor} \label{cor:gradedap}
Let $L \to M \to N$ be a complex of $\zln$-graded $S$-modules,
where the maps are homogeneous and $M \in \fgmod(S)$. Let
$H = \bigoplus_{\zln} H_n$ be the homology of the complex. Then
the sets $\ap_R H_n$ stabilize.
\end{cor}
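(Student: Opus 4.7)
The plan is to reduce this to Theorem~\ref{thm:MSgraded} by observing that even though $L$ and $N$ are not assumed to be finitely generated over $S$, the homology $H$ necessarily is. The key point is that $H$ is a subquotient of $M$, which is finitely generated over the Noetherian ring $S$.

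More precisely, write the complex as $L \xrightarrow{\alpha} M \xrightarrow{\beta} N$ with $\alpha,\beta$ homogeneous and $\beta\alpha = 0$. First I would argue that $\ker(\beta)$ is a graded $S$-submodule of $M$: gradedness follows since $\beta$ is homogeneous, and finite generation follows from $M \in \fgmod(S)$ together with the fact that $S$ is Noetherian. Similarly, $\im(\alpha)$ is a graded $S$-submodule of $M$, hence also finitely generated.

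Next I would form the quotient $H = \ker(\beta)/\im(\alpha)$. Since both $\ker(\beta)$ and $\im(\alpha)$ are graded, $H$ inherits a natural $\zln$-grading with graded pieces
\[
  H_n = \ker(\beta)_n / \im(\alpha)_n,
\]
which agree with the homology groups of the complex in degree $n$. As a quotient of a finitely generated graded $S$-module by a graded submodule, $H$ itself lies in $\fgmod(S)$.

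Finally I would invoke Theorem~\ref{thm:MSgraded} applied to $H$ to conclude that the sets $\ap_R H_n$ stabilize. There is no real obstacle here; the only subtlety to flag is that neither $L$ nor $N$ needs to be finitely generated for the conclusion to hold, because the argument only uses finite generation of $M$ and Noetherianness of $S$ to control the homology.
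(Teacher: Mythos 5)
Your argument is correct and is exactly the reasoning the paper leaves implicit: since the maps are homogeneous, $H$ is a graded subquotient of $M$, hence a finitely generated graded $S$-module (using that $S$ is Noetherian), and Theorem~\ref{thm:MSgraded} applies directly. The paper states the corollary without proof precisely because this is the intended one-line reduction, so your write-up matches the paper's approach.
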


\begin{cor} \label{cor:gradeddep}
Let $M = \bigoplus_{n \in \zln} M_n$ be a finitely generated
graded $S$-module, for example the module $H$ as in
Corollary~\ref{cor:gradedap}. Let $J$ be an ideal of $R$.
Then the values $\dep_J M_n$ stabilize.
\end{cor}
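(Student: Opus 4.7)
The plan is to mirror the inductive argument used in the proof of Theorem~\ref{thm:Bdepth}, with Theorem~\ref{thm:MSgraded} replacing Theorem~\ref{thm:KWgen} as the source of asymptotic stability of associated primes. Throughout, I will use that any quotient of $M$ by a submodule generated by a homogeneous element of degree $0$ is again a finitely generated graded $S$-module, whose $n$th graded piece is the corresponding quotient of $M_n$.

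First I would dispose of the case where $M_n = J M_n$ for infinitely many $n$. The module $M/JM$ is itself a finitely generated graded $S$-module with $(M/JM)_n = M_n/JM_n$, so Theorem~\ref{thm:MSgraded} says $\ap_R(M_n/JM_n)$ is eventually constant; if it is empty infinitely often, it is empty for all large $n$, and then $\dep_J M_n = \infty$ for all large $n$. Hence we may assume $M_n \neq JM_n$ for $n \gg 0$.

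Next, set $h_M := \liminf_{n \to \infty} \dep_J M_n$ and induct on $h_M$. By Theorem~\ref{thm:MSgraded}, there is a finite set $\mathcal{A} \subseteq \spec R$ with $\ap_R M_n = \mathcal{A}$ for all $n \gg 0$. If $h_M = 0$, then for infinitely many $n$ we have $J \subseteq \bigcup_{P \in \ap_R M_n} P = \bigcup_{P \in \mathcal{A}} P$, so prime avoidance gives $J \subseteq P$ for some $P \in \mathcal{A}$; since $\mathcal{A}$ is fixed, this forces $\dep_J M_n = 0$ for all large $n$, as desired.

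For the inductive step $h_M > 0$, no prime of $\mathcal{A}$ contains $J$, so prime avoidance produces $x \in J$ avoiding every $P \in \mathcal{A}$; then $x$ is a nonzerodivisor on $M_n$ for all $n \gg 0$. Because $x \in R_0$, the module $M/xM$ is a finitely generated graded $S$-module with $(M/xM)_n = M_n/xM_n$, and $\dep_J(M_n/xM_n) = \dep_J M_n - 1$ for large $n$, so $h_{M/xM} = h_M - 1$. The inductive hypothesis applied to $M/xM$ then makes $\dep_J M_n$ stabilize. The main obstacle — and the whole point of invoking Theorem~\ref{thm:MSgraded} — is to extract a \emph{single} element $x$ that is simultaneously a nonzerodivisor on $M_n$ for all sufficiently large $n$; finiteness and stability of $\mathcal{A}$ is what makes this uniform choice possible.
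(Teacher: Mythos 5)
Your proposal is correct and follows essentially the same route as the paper: the paper's proof simply notes that the argument of Theorem~\ref{thm:Bdepth} carries over verbatim because $M/JM$ and $M/xM$ are again finitely generated graded $S$-modules, with Theorem~\ref{thm:MSgraded} supplying the stability of $\ap_R M_n$ in place of Theorem~\ref{thm:KWgen}. Your explicit use of the stable finite set $\mathcal{A}$ and prime avoidance just spells out what that argument leaves implicit.
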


\begin{proof}
The proof of Theorem~\ref{thm:Bdepth} works, since
$M/JM = \bigoplus_{n \in \zln} (M_n / J M_n)$ and
$M/x M$ are also finitely generated graded $S$-modules.
\end{proof}

\begin{cor} \label{cor:graded}
  Let $I$ be an ideal of $R$, $M \in \fgmod(R)$ and $M' \subseteq M$
  be a submodule. Then the sets $\ap_R F( I^n M / I^n M')$ and the values
  $\dep_J F(I^n M/I^n M')$ stabilize.
\end{cor}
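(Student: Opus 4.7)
The plan is to imitate the strategy of Corollary~\ref{cor:coh}, replacing the Artin--Rees/graded-submodule bookkeeping with the direct observation that everything in sight now admits a natural homogeneous structure over the Rees algebra $S = R[It] = \bigoplus_{n \geq 0} I^n t^n$, which is a Noetherian graded $R$-algebra generated in degree $1$ with $R_0 = R$. Once $\bigoplus_n F(I^nM/I^nM')$ is exhibited as a finitely generated graded $S$-module, Corollary~\ref{cor:gradedap} (via Theorem~\ref{thm:MSgraded}) and Corollary~\ref{cor:gradeddep} finish the job at one stroke.

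First I would record that $\bigoplus_{n\geq 0} I^n M$ and $\bigoplus_{n\geq 0} I^n M'$ are finitely generated graded $S$-modules, hence so is
\[
  \mathcal{N} := \bigoplus_{n \geq 0} I^n M / I^n M'.
\]
Next, since $F$ is coherent, choose a presentation $h_L \to h_K \to F \to 0$ arising from a map $f\colon K \to L$, and lift as in \eqref{eqn:lift} to partial free resolutions $R^{\oplus k_1}\xrightarrow{\beta} R^{\oplus k_0}\to K\to 0$ and $R^{\oplus \ell_1}\xrightarrow{\gamma} R^{\oplus \ell_0}\to L\to 0$ together with $\alpha\colon R^{\oplus k_0}\to R^{\oplus \ell_0}$ lifting $f$. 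Applying $\bighom_R(-,I^nM/I^nM')$ to this diagram, as in the proof of Corollary~\ref{cor:coh}, yields maps $\alpha_n^{*}, \beta_n^{*}, \gamma_n^{*}$ of finite direct sums of $I^nM/I^nM'$ with
\[
  F\!\left(\frac{I^nM}{I^nM'}\right) \;\cong\; \frac{\ker \beta_n^{*}}{\alpha_n^{*}(\ker\gamma_n^{*})}.
\]

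The key point is that the matrices defining $\alpha_n^{*}, \beta_n^{*}, \gamma_n^{*}$ have entries in $R$ independent of $n$, so assembling over $n$ they give homogeneous $S$-module maps of degree $0$ between the finitely generated graded $S$-modules $\mathcal{N}^{\oplus k_0}, \mathcal{N}^{\oplus k_1}, \mathcal{N}^{\oplus \ell_0}, \mathcal{N}^{\oplus \ell_1}$. Because $S$ is Noetherian, the graded submodules $\bigoplus_n \ker\beta_n^{*}$ and $\bigoplus_n \ker\gamma_n^{*}$ are finitely generated, hence so is $\bigoplus_n \alpha_n^{*}(\ker\gamma_n^{*})$, and consequently
\[
  \bigoplus_{n \geq 0} F\!\left(\frac{I^n M}{I^n M'}\right) \;\cong\; \frac{\bigoplus_n \ker\beta_n^{*}}{\bigoplus_n \alpha_n^{*}(\ker\gamma_n^{*})}
\]
is a finitely generated graded $S$-module. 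Applying Corollary~\ref{cor:gradedap} (or Theorem~\ref{thm:MSgraded} directly) gives the stability of $\ap_R F(I^nM/I^n M')$, while Corollary~\ref{cor:gradeddep} gives the stability of $\dep_J F(I^n M/I^n M')$.

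The only real obstacle is verifying that the assembled maps $\bigoplus_n \alpha_n^{*},\bigoplus_n \beta_n^{*},\bigoplus_n \gamma_n^{*}$ are genuinely $S$-linear and homogeneous; this comes down to the observation that scalar multiplication by an element of $I^m t^m \subset S$ on $I^nM/I^nM'$ is just multiplication by the underlying element of $I^m \subseteq R$, sending the $n$-th graded piece into the $(n+m)$-th, and that the $\alpha_n^{*}, \beta_n^{*}, \gamma_n^{*}$ are induced by matrices over $R$ acting coordinatewise. Everything else then follows formally from Noetherianity of $S$ and the previously established graded results.
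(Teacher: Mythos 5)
Your proposal is correct and follows essentially the same route as the paper: apply $\bighom_R(-,I^nM/I^nM')$ to the lifted diagram \eqref{eqn:lift}, identify $F(I^nM/I^nM') \cong \ker\beta_n^{*}/\alpha_n^{*}(\ker\gamma_n^{*})$, assemble over $n$ into graded modules over the Rees algebra $S=\bigoplus_{n\geq 0} I^n$, and invoke Corollaries~\ref{cor:gradedap} and \ref{cor:gradeddep}. The only difference is cosmetic: you verify finite generation of $\bigoplus_n F(I^nM/I^nM')$ directly via Noetherianity, whereas the paper simply notes that only the middle term of the graded complex need be finitely generated and cites the same two corollaries.
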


\begin{proof}
  As in Corollary~\ref{cor:coh}, we apply $\bighom_R(-,I^n M/I^n M')$
  to (\ref{eqn:lift}) to get
  \[
  \xymatrix{
    \ds \frac{I^n \left(M^{\oplus \ell_1}\right)}
        {I^n \left((M')^{\oplus \ell_1}\right)}
    \ar[r] &
    \ds \frac{I^n \left(M^{\oplus k_1}\right)}
        {I^n \left((M')^{\oplus k_1}\right)}\\
    \ds \frac{I^n \left(M^{\oplus \ell_0}\right)}
        {I^n \left((M')^{\oplus \ell_0}\right)}
    \ar[u]_{\gamma_n^*} \ar[r]^{\alpha_n^*} &
    \ds \frac{I^n \left(M^{\oplus k_0}\right)}
        {I^n \left((M')^{\oplus k_0}\right)}
    \ar[u]_{\beta_n^*}\\
    h_L\left( \ds \frac{I^n M}{I^n M'} \right)
    \ar[u] \ar[r]^{f_n^*} &
    h_K\left( \ds \frac{I^n M}{I^n M'} \right)
    \ar[u] \ar[r] &
    F \left( \ds \frac{I^n M}{I^n M'} \right)
    \ar[r] & 0\\
    0 \ar[u] & 0 \ar[u]
  }
  \]
  Again we have $\ds F\left(\frac{I^n M}{I^n M'}\right) \cong
  \frac{\ker \beta_n^{*}}{\alpha_n^{*} \left( \ker \gamma_n^{*} \right)}$\ ,
  where $\alpha_n^*,\beta_n^*,\gamma_n^*$ are the maps induced by
  $\alpha,\beta,\gamma$ in (\ref{eqn:lift}) respectively, so the result
  follows from Corollaries~\ref{cor:gradedap} and \ref{cor:gradeddep} by letting
  $S = \bigoplus_{n \geq 0} I^n$.
\end{proof}

A coherent functor $F$ given by $h_L \to h_K \to F \to 0$ can be
considered as a functor $\bigmod(R) \to \bigmod(R)$ since $h_L$ and $h_K$
are (cf.\ \cite[Remark 3.3]{Har}). So the proof of Corollary~\ref{cor:graded}
gives the next result.

\begin{cor}
  Let $F$ be a coherent functor, $M \in \fgmod(R)$, $M' \subseteq M$
  be a submodule, $I$ be an ideal of $R$, $S = \mathscr{R}(I)
  = \bigoplus_{n \geq 0} I^n$ and $\gr(I) = \bigoplus_{n \geq 0} I^n/I^{n+1}$.
  Then:
  \begin{enumerate}[label=(\alph*),align=left,leftmargin=*,nosep]
    \item \label{item:rees}
    $F\left( \bigoplus_{n \geq 0} I^n M / I^n M' \right)
    = \bigoplus_{n \geq 0} F(I^n M / I^n M')$ is a finitely generated
    graded $S$-module.
    \item \label{item:gri}
    When $M' = IM$, $F\left( \bigoplus_{n \geq 0} I^n M / I^{n+1} M \right)
    = \bigoplus_{n \geq 0} F(I^n M / I^{n+1} M)$ is a finitely generated
    graded $\gr(I)$-module.
    \item The module structures over $S$ and $\gr(I)$ in \ref{item:rees}
    and \ref{item:gri} respectively correspond to the multiplication maps
    given by applying $F$ to $I^n M / I^n M' \xrightarrow{x}
    I^{n+m} M / I^{n+m} M'$, where $x \in I^m$.
  \end{enumerate}
\end{cor}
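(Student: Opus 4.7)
Following the remark just before the statement, the plan is to mirror the proof of Corollary~\ref{cor:graded}, this time keeping track of the graded module structures. Fix a presentation $h_L \to h_K \to F \to 0$ and view $F$ as a functor $\bigmod(R) \to \bigmod(R)$ via \cite[Remark~3.3]{Har}. Since $R$ is Noetherian, the finitely generated modules $L$ and $K$ are finitely presented, so the representable functors $h_L$ and $h_K$ commute with arbitrary direct sums; hence so does $F$, because cokernels commute with direct sums. This immediately yields the identification $F\bigl(\bigoplus_{n \geq 0} I^n M / I^n M'\bigr) = \bigoplus_{n \geq 0} F(I^n M / I^n M')$ asserted in~(a).

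For the graded $S$-module structure and finite generation in~(a), set $\mathcal{N} = \bigoplus_{n \geq 0} I^n M / I^n M'$, which is a finitely generated graded $S$-module because it is a quotient of $S \otimes_R M$. Apply $\bighom_R(-,\mathcal{N})$ to the lift diagram~(\ref{eqn:lift}); each of the four corner terms becomes $\bigoplus_{n \geq 0} I^n M^{\oplus r}/I^n (M')^{\oplus r}$ for the appropriate $r \in \{k_0,k_1,\ell_0,\ell_1\}$, which is again a finitely generated graded $S$-module (a quotient of $S \otimes_R M^{\oplus r}$), and the induced maps $\alpha^{*},\beta^{*},\gamma^{*}$ are $S$-linear and homogeneous. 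As in the proof of Corollary~\ref{cor:graded}, the $n$th graded piece of the subquotient $\ker\beta^{*}/\alpha^{*}(\ker\gamma^{*})$ is precisely $F(I^n M/I^n M')$. Since $S$ is Noetherian, this graded subquotient of finitely generated graded $S$-modules is itself a finitely generated graded $S$-module, yielding~(a).

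Part~(b) follows from the same argument with $M' = IM$ and $\gr(I)$ replacing $S$ throughout: the analogue of $\mathcal{N}$ is $\bigoplus_{n \geq 0} I^n M/I^{n+1} M$, a finitely generated graded $\gr(I)$-module as a quotient of $\gr(I) \otimes_R M$, and the four corner terms are finitely generated graded $\gr(I)$-modules by the same reasoning. For~(c), the $S$-action on $\mathcal{N}$ by $x \in I^m$ is literally the multiplication map $I^n M/I^n M' \xrightarrow{x} I^{n+m} M/I^{n+m} M'$, so functoriality transports this to the $S$-action on $F(\mathcal{N}) = \bigoplus_{n} F(I^n M/I^n M')$ induced by $F$ applied to these multiplication maps; the $\gr(I)$ statement is identical. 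The only point that genuinely needs checking is this last naturality in~(c), which is immediate once $F$ has been extended to $\bigmod(R)$ and shown to commute with direct sums, so I do not anticipate a significant obstacle.
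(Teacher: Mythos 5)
Your proposal is correct and follows essentially the same route as the paper, which simply extends $F$ to $\bigmod(R)$ via its presentation and reuses the proof of Corollary~\ref{cor:graded}: the graded pieces $F(I^nM/I^nM')$ assemble into the subquotient $\ker\beta^{*}/\alpha^{*}(\ker\gamma^{*})$ of finitely generated graded modules over the Noetherian rings $S$ and $\gr(I)$. You merely make explicit two points the paper leaves implicit (that $h_L$, $h_K$, and hence $F$, commute with direct sums, and the naturality giving part~(c)), and both are handled correctly.
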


\begin{rmk}
  Instead of studying asymptotic stability properties of covariant
  coherent functors, one may want to consider contravariant coherent functors
  as well. Unfortunately, as stated in \cite[Remark 3.6]{KW}, the sets
  $\ap_R \ext_R^i (R/I^n,R)$ do not stabilize in general, so our
  main focus will be on covariant functors. See \cite[Introduction]{Si} and
  \cite[Proposition 2.1]{SS} for further details.
\end{rmk}

\section{Examples of non-coherent functors with asymptotic stability}
\label{sec:twoeg}

In view of the results in Section~\ref{sec:stab}, one may be interested
in knowing whether or not a $R$-linear covariant functor is coherent.
Some important examples of coherent functors are given in Lemma~\ref{lem:Har}.
In this section, we will study the zeroth local cohomology functor $\Gamma_I = \bigh_I^0$
where $I$ is an ideal of $R$, and the torsion functor $\tau_S$ where $S$ is a multiplicatively
closed subset of $R$. It turns out that if $F = \Gamma_I$, $\tau_S$, $\id\!/\Gamma_I$
or $\id\!/\tau_S$, then the functor $F$ is usually not coherent. However, we will
see that whether or not $F$ is coherent, the sets $\ap_R F(M/I^n M)$ and
$\ap_R F(I^{n-1}M / I^n M)$ always stabilize.

\begin{lem}[Yoneda's Lemma] \label{lem:Yon}
  Let $R$ be a Noetherian ring and $F$ be a finitely generated functor
  given by $h_M \xrightarrow{T} F \to 0$. Then for any $N \in \fgmod(R)$
  and $x \in F(N)$, there is $f \in \bighom_R(M,N)$ such that
  $x = (F(f) \circ T_M)(\id_M)$. In particular, $x \in \im F(f)$.
\end{lem}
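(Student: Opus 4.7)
The plan is to combine two standard inputs: the surjectivity of $T$ evaluated at $N$, and the naturality square of $T$ associated to a chosen map $f \colon M \to N$. The identification $\nat_{\rlin}(h_M,F)\cong F(M)$ from Lemma~\ref{lem:Har}\ref{item:nat} underlies the whole statement, and what needs to be checked is really just that the isomorphism behaves well under pullback along $f$.

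First I would use that $h_M \xrightarrow{T} F \to 0$ is exact in $\rlin$, which (since exactness of functors is evaluated pointwise on objects) means that $T_N \colon \bighom_R(M,N) \to F(N)$ is surjective. Hence for the given $x \in F(N)$ there exists $f \in \bighom_R(M,N)$ with $T_N(f) = x$. This is the choice of $f$ required by the statement.

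Next I would exploit naturality of $T$ at the morphism $f \colon M \to N$, which gives a commutative square
\[
\xymatrix{
h_M(M) \ar[r]^{T_M} \ar[d]_{h_M(f)} & F(M) \ar[d]^{F(f)} \\
h_M(N) \ar[r]^{T_N} & F(N)
}
\]
Tracking $\id_M \in h_M(M)$ through this square: along the left column, $h_M(f)(\id_M) = f \circ \id_M = f$, and then $T_N(f) = x$ by the choice of $f$; along the right column we obtain $F(f)(T_M(\id_M)) = (F(f)\circ T_M)(\id_M)$. Commutativity of the square then yields $x = (F(f)\circ T_M)(\id_M)$, and in particular $x \in \im F(f)$.

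There is essentially no obstacle: the only non-formal ingredient is the pointwise surjectivity of $T_N$ coming from the definition of finite generation of $F$, and the rest is the universal Yoneda calculation. The Noetherian/finitely-generated hypotheses are used only to ensure that all the objects in play lie in $\fgmod(R)$ so that $T$ is a morphism in $\rlin$; the argument itself is formal and would work for any natural transformation from a representable functor that is pointwise surjective at $N$.
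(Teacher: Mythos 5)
Your proposal is correct and is essentially identical to the paper's proof: choose $f$ with $T_N(f)=x$ using the pointwise surjectivity of $T$, then chase $\id_M$ through the naturality square of $T$ at $f$. No differences worth noting.
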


\begin{proof}
  If $x \in F(N)$, then we let $f \in \bighom_R(M,N)$ be such that
  $T_N(f)=x$. The result follows from the commutative diagram
  \begin{gather*}
    \xymatrix{
      \bighom_R(M,M) \ar[r]^-{T_M} \ar[d]_{h_M(f)}
      & F(M) \ar[r] \ar[d]^{F(f)} & 0\\
      \bighom_R(M,N) \ar[r]^-{T_N} & F(N) \ar[r] & 0
    }\\[-18pt]
    \qedhere
  \end{gather*}
\end{proof}

\begin{cor} \label{cor:dirlim}
  Let $R$ be a Noetherian ring and $\{ F_{\lambda} \}_{\lambda \in
  \Lambda}$ be a direct system of functors in $\rlin$.
  Let $F = \varinjlim_{\lambda \in \Lambda} F_{\lambda}$ be given by
  $\{\, T_{\lambda} \colon F_{\lambda} \to F \,\}_{\lambda \in \Lambda}$.
  If $F \in \rlin$ and is finitely generated, then
  $F = \im T_{\lambda_0}$ for some $\lambda_0 \in \Lambda$.
  In particular, if $T_{\lambda}$
  is injective for all $\lambda \in \Lambda$, then $F = F_{\lambda}$
  for all $\lambda \geq \lambda_0$.
\end{cor}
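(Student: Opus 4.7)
The plan is to combine Yoneda's Lemma (Lemma~\ref{lem:Yon}) with the fact that directed colimits in $\rlin$ are computed pointwise.

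First I would fix a surjection $T\colon h_M \twoheadrightarrow F$ witnessing finite generation of $F$, and let $x = T_M(\id_M) \in F(M)$ be the element corresponding to $T$ under Yoneda. Since directed colimits of $R$-linear functors are computed pointwise, and the hypothesis $F \in \rlin$ ensures these pointwise colimits land in $\fgmod(R)$, we have $F(M) = \varinjlim_{\lambda} F_\lambda(M)$ in $\bigmod(R)$. By directedness, some $\lambda_0 \in \Lambda$ and some $y \in F_{\lambda_0}(M)$ then satisfy $(T_{\lambda_0})_M(y) = x$.

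Applying Yoneda a second time, $y$ corresponds to a natural transformation $S\colon h_M \to F_{\lambda_0}$ with $S_M(\id_M) = y$. The two natural transformations $T$ and $T_{\lambda_0} \circ S$ from $h_M$ to $F$ both send $\id_M$ to $x$, so they coincide by the injectivity of the Yoneda correspondence. Since $T$ is surjective, so is $T_{\lambda_0}$, which gives $F = \im T_{\lambda_0}$.

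For the ``in particular'' clause, if every $T_\lambda$ is injective then $T_{\lambda_0}$ is both injective and surjective, hence an isomorphism. For any $\lambda \geq \lambda_0$, the connecting natural transformation $F_{\lambda_0} \to F_\lambda$ of the direct system, composed with $T_\lambda$, equals $T_{\lambda_0}$; since this composite is an isomorphism, $T_\lambda$ is surjective, and being injective by hypothesis it is an isomorphism as well. I do not anticipate any real obstacle here: the only non-formal ingredient is the pointwise computation of the colimit, which is a standard property of functor categories and is made consistent by the assumption $F \in \rlin$. Everything else is a direct application of Yoneda's Lemma.
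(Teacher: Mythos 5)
Your proof is correct and rests on the same two ingredients as the paper's: the pointwise computation of the filtered colimit at the test module $M$, and Yoneda (Lemma~\ref{lem:Har}\ref{item:nat} / Lemma~\ref{lem:Yon}). The only difference is in packaging: the paper uses finite generation of the module $F(M)$ to choose $\lambda_0$ with $F(M)=\im (T_{\lambda_0})_M$ and then deduces surjectivity at each $N$ by chasing the naturality square, whereas you lift only the single element $T_M(\id_M)$ and invoke the bijection $\nat_{\rlin}(h_M,F)\cong F(M)$ to factor $T$ as $T_{\lambda_0}\circ S$, which gives surjectivity of $T_{\lambda_0}$ at all $N$ at once (and, as a small bonus, does not even need $F(M)$ itself to be finitely generated, only the existence of the surjection $h_M\to F$); your treatment of the ``in particular'' clause via the compatibility $T_\lambda\circ(\text{connecting map})=T_{\lambda_0}$ is likewise fine.
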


\begin{proof}
  Let $F$ be given by $h_M \to F \to 0$. Since $F(M) \in \fgmod(R)$,
  there is $\lambda_0 \in \Lambda$ such that $F(M) = \im (T_{\lambda_0})_M$.
  Let $N \in \fgmod(R)$ and $x \in F(N)$. By Lemma~\ref{lem:Yon},
  there is $f \in \bighom_R(M,N)$ such that $x \in \im F(f)
  \subseteq \im (T_{\lambda_0})_N$.
  \[
    \xymatrix@C=48pt{
      F_{\lambda_0}(M) \ar@{->>}[r]^-{(T_{\lambda_0})_M} \ar[d]_{F_{\lambda_0}(f)}
      & F(M) \ar[d]^{F(f)}\\
      F_{\lambda_0}(N) \ar[r]^-{(T_{\lambda_0})_N}
      & F(N)
    }
  \]
  Therefore $F = T_{\lambda_0}(F_{\lambda_0})$.
\end{proof}

In the following, we will consider two applications of
Corollary~\ref{cor:dirlim}.

\begin{cor} \label{cor:lc}
  Let $I$ be an ideal of a Noetherian ring $R$. The following are equivalent:
  \begin{enumerate}[label=(\alph*),align=left,leftmargin=*,nosep]
    \item $\Gamma_I$ is representable.
    \item $\Gamma_I$ is finitely generated.
    \item $I^n = I^{n+1}$ for some $n \geq 0$.
  \end{enumerate}
\end{cor}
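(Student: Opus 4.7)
The plan is a cyclic implication $(a) \Rightarrow (b) \Rightarrow (c) \Rightarrow (a)$, where $(a) \Rightarrow (b)$ is free from the definitions. The key observation is that $\Gamma_I$ is naturally the direct limit of the functors $h_{R/I^n}$, with transition maps induced by the surjections $R/I^{n+1} \twoheadrightarrow R/I^n$; for each $M \in \fgmod(R)$ this recovers the standard description $\Gamma_I(M) = \bigcup_{n \geq 0} \bighom_R(R/I^n, M)$, and the transition maps on $\bighom_R(R/I^n, M)$ are just the inclusions of submodules of $M$, hence injective.

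For $(b) \Rightarrow (c)$, I would apply Corollary~\ref{cor:dirlim} to the direct system $\{h_{R/I^n}\}$: since $\Gamma_I$ is finitely generated and the transition natural transformations are injective, there exists $n_0$ such that the inclusion $h_{R/I^{n_0}} \hookrightarrow \Gamma_I$ is an isomorphism, and in particular $h_{R/I^{n_0}} = h_{R/I^{n_0+1}}$ as functors. Evaluating both sides at $M = R/I^{n_0+1}$ gives
\[
    \bighom_R(R/I^{n_0}, R/I^{n_0+1}) = \bighom_R(R/I^{n_0+1}, R/I^{n_0+1}) = R/I^{n_0+1},
\]
while the left-hand side equals $(I^{n_0+1} : I^{n_0})/I^{n_0+1}$. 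Hence $(I^{n_0+1} : I^{n_0}) = R$, i.e.\ $I^{n_0} \subseteq I^{n_0+1}$, and combined with the trivial reverse inclusion we conclude $I^{n_0} = I^{n_0+1}$.

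For $(c) \Rightarrow (a)$, assume $I^n = I^{n+1}$; by an immediate induction $I^m = I^n$ for all $m \geq n$. Consequently, for any $M \in \fgmod(R)$,
\[
    \Gamma_I(M) = \bigcup_{m \geq 0} \ann_M(I^m) = \ann_M(I^n) = \bighom_R(R/I^n, M) = h_{R/I^n}(M),
\]
naturally in $M$, so $\Gamma_I \cong h_{R/I^n}$ is representable.

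The only subtle step is $(b) \Rightarrow (c)$, and the main obstacle there is recognizing $\Gamma_I$ as the direct limit of the $h_{R/I^n}$ with injective transitions, which is exactly what makes Corollary~\ref{cor:dirlim} applicable; once that is in place, the evaluation at $R/I^{n_0+1}$ is a short computation.
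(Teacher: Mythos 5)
Your proposal is correct and follows essentially the same route as the paper: both identify $\Gamma_I$ as the direct limit $\varinjlim_n h_{R/I^n}$ (equivalently $\Gamma_I(M)=\varinjlim_n (0:_M I^n)$) with injective transition maps, apply Corollary~\ref{cor:dirlim} to deduce $\Gamma_I = h_{R/I^{n_0}}$ for some $n_0$, and then evaluate at $M = R/I^{n_0+1}$ to force $I^{n_0}=I^{n_0+1}$, with the converse direction giving representability directly. Your write-up merely spells out the colon-ideal computation and the $(c)\Rightarrow(a)$ step that the paper's compressed ``iff'' chain leaves implicit.
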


\begin{proof}
  For all $M \in \bigmod(R)$, we have $\Gamma_I(M) = \varinjlim_n
  \bighom_R(R/I^n,M) = \varinjlim_n (0:_M I^n)$. So by
  Corollary~\ref{cor:dirlim}, $\Gamma_I$ is finitely generated iff
  there exists $n \geq 0$ such that $\Gamma_I(M) = \bighom_R(R/I^n,M)$
  for all $M \in \fgmod(R)$ iff $I^n = I^{n+1}$ for some $n \geq 0$
  by considering $M = R/I^{n+1}$ for ``only if''.
\end{proof}

The relationship between our result and Section~\ref{sec:stab} is as follows.

\begin{thm}\cite[Theorem 1.1(a)]{Har}
  Let $F,G$ be coherent functors and $T \colon F \to G$ be a natural
  transformation. Then $\ker(T)$, $\coker(T)$ and $\im(T)$ are also coherent.
\end{thm}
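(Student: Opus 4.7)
The plan is to use Yoneda's lemma to lift $T$ to a morphism between chosen presentations of $F$ and $G$, and then to read off presentations for $\coker(T)$, $\ker(T)$, and $\im(T)$ by a diagram chase. Choose presentations $h_N \xrightarrow{u} h_M \xrightarrow{\pi} F \to 0$ and $h_{N'} \xrightarrow{u'} h_{M'} \xrightarrow{\pi'} G \to 0$. The composition $T \circ \pi \colon h_M \to G$ corresponds by Lemma~\ref{lem:Yon} to an element of $G(M)$; since $\pi'_M$ is surjective, this element lifts to $h_{M'}(M) = \bighom_R(M', M)$, and Yoneda in reverse yields a natural transformation $\psi \colon h_M \to h_{M'}$ with $\pi' \psi = T \pi$. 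Applying the same argument to $\psi u \colon h_N \to h_{M'}$, which has zero image in $G$, produces $\phi \colon h_N \to h_{N'}$ with $u' \phi = \psi u$, giving a commutative ladder between the two presentations.

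The cokernel is then immediate: a diagram chase gives $\im(T) = \im(\pi' \psi)$, so
\[
  \coker(T) \;=\; h_{M'}/\bigl(\im u' + \im \psi\bigr) \;=\; \coker\!\bigl(h_{N' \oplus M} \xrightarrow{[u',\,\psi]} h_{M'}\bigr),
\]
which is manifestly coherent.

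The kernel is the main technical obstacle, since the lifted ladder does not directly present $\ker(T)$. Let $f \colon M' \to M$ and $g \colon M' \to N'$ be the morphisms in $\fgmod(R)$ corresponding via Yoneda to $\psi$ and $u'$, and form the pushout $Q = M \sqcup_{M'} N'$. Then $h_Q$ is the pullback in $\rlin$ of $h_M \xrightarrow{\psi} h_{M'} \xleftarrow{u'} h_{N'}$, and first-factor projection $h_Q \to h_M$ has image exactly $K := \ker(\pi' \psi \colon h_M \to G)$ with kernel the representable $h_{N'/\im g}$. The relation $\psi u = u' \phi$ lets us lift $u$ to $\tilde{u} = (u, \phi) \colon h_N \to h_Q$. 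Since $h_N \to h_M$ already factors through $K$ (because $h_N \to F$ is zero), $\ker(T) = K/\im(h_N \to K)$, and consolidating we obtain the presentation
\[
  h_{N \oplus (N'/\im g)} \;\longrightarrow\; h_Q \;\longrightarrow\; \ker(T) \;\longrightarrow\; 0,
\]
so $\ker(T)$ is coherent.

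Finally, $\im(T) = \ker\!\bigl(G \to \coker(T)\bigr)$ realizes the image as the kernel of a natural transformation between two coherent functors, which is coherent by the construction just given. I expect the kernel step to be the main technical point, since it is the only step requiring an auxiliary construction (the pushout $Q$) and a verification of exactness of a new sequence; the cokernel and image are then formal.
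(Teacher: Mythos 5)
Your argument is correct, but note that the paper itself offers nothing to compare it with: Theorem 1.1(a) of Hartshorne is simply quoted there as a citation, with no proof reproduced. What you have written is essentially the standard proof from the theory of coherent functors. The lifting of $T$ to a ladder between the two presentations is exactly the Yoneda argument ($\nat(h_M,G)\cong G(M)$ together with pointwise surjectivity of $\pi'_M$, and exactness at $h_{M'}$ for the second lift), and your cokernel presentation $h_{N'\oplus M}\xrightarrow{[u',\,\psi]} h_{M'}\to\coker(T)\to 0$ is right. The kernel step is also sound: since $Q=M\sqcup_{M'}N'$ is finitely generated, $h_Q$ is representable and, by the universal property of the pushout, is precisely the pullback $h_M\times_{h_{M'}}h_{N'}$, whose first projection has image $K=\ker(\pi'\psi)$ and kernel $h_{N'/\im g}$; combined with $\widetilde{u}=(u,\phi)$ this does give the exact sequence $h_{N\oplus(N'/\im g)}\to h_Q\to\ker(T)\to 0$. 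The only places where you are terser than you should be are the identification $\ker(T)\cong K/\im u$ (one should say explicitly that $\pi$ restricted to $K$ is pointwise surjective onto $\ker(T)$ with kernel $\ker\pi=\im u\subseteq K$, since $K=\ker(T\pi)\supseteq\ker\pi$) and the remark that $Q$ and $N'/\im g$ are finitely generated, which is what makes the displayed sequences genuine coherent presentations; both are one-line verifications. The final reduction $\im(T)=\ker\bigl(G\to\coker(T)\bigr)$ is a clean way to finish and is exactly how one expects this to be organized.
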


\begin{lem}
  Let $R$ be a Noetherian ring, $I \subseteq R$ be an ideal and $M \in \bigmod(R)$.
  Then $\ap_R \Gamma_I(M) = \ap_R(M) \cap V(I)$ and
  $\ap_R (M/\Gamma_I(M)) = \ap_R(M) \setminus V(I)$, where $V(I) =
  \{ P \in \spec(R) \mid P \supseteq I \}$.
\end{lem}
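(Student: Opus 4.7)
The plan is to prove the two equalities separately, working from the characterization $\Gamma_I(M) = \bigcup_{n \geq 0} (0 :_M I^n)$, i.e., $\Gamma_I(M)$ is the $I$-torsion submodule of $M$.

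For the first equality $\ap_R \Gamma_I(M) = \ap_R(M) \cap V(I)$, both inclusions are direct. If $P = \ann_R(x)$ for some $x \in \Gamma_I(M) \subseteq M$, then $P \in \ap_R(M)$ automatically, and $I^n x = 0$ forces $I^n \subseteq P$, hence $I \subseteq P$ since $P$ is prime. Conversely, if $P \in \ap_R(M) \cap V(I)$ and $P = \ann_R(x)$, then $Ix \subseteq Px = 0$, so $x \in \Gamma_I(M)$ and $P \in \ap_R \Gamma_I(M)$.

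For $\ap_R(M/\Gamma_I(M)) = \ap_R(M) \setminus V(I)$, the inclusion $(\supseteq)$ follows from the short exact sequence $0 \to \Gamma_I(M) \to M \to M/\Gamma_I(M) \to 0$, which gives $\ap_R(M) \subseteq \ap_R(\Gamma_I(M)) \cup \ap_R(M/\Gamma_I(M))$, combined with the first equality. For the inclusion $(\subseteq)$, take $P \in \ap_R(M/\Gamma_I(M))$, say $P = \ann_R(\bar x)$ with $x \in M \setminus \Gamma_I(M)$. First, $P \notin V(I)$: if $I \subseteq P$, then $Ix \subseteq \Gamma_I(M)$, and since $R$ is Noetherian we can write $I = (a_1,\dots,a_r)$ with each $a_i x$ killed by some $I^{n_i}$, so $I^{n+1}x = 0$ for $n = \max n_i$, contradicting $x \notin \Gamma_I(M)$. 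Second, to obtain $P \in \ap_R(M)$, I would localize at $P$: choosing $s \in I \setminus P$, every element of $\Gamma_I(M)$ is killed by a power of $s$, which is a unit in $R_P$, so $\Gamma_I(M)_P = 0$ and $M_P \cong (M/\Gamma_I(M))_P$. Then $PR_P \in \ap_{R_P}(M_P)$, which over a Noetherian ring is equivalent to $P \in \ap_R(M)$.

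The main subtlety is that $M$ is not assumed finitely generated, so one cannot appeal to a primary decomposition. However, the standard facts used here — that $\ap_R$ is sub-additive on short exact sequences, that $\ann_R(x)$ prime implies membership in $\ap_R$, and that $P \in \ap_R(M) \Leftrightarrow PR_P \in \ap_{R_P}(M_P)$ — all hold for arbitrary modules over a Noetherian ring, so no modification of the arguments is required in this generality.
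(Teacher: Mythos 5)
Your argument is correct, and in fact the paper offers no proof of this lemma at all --- it is stated as a standard fact --- so there is nothing to compare against except the literature's usual argument, which yours essentially is. Both inclusions for $\ap_R\Gamma_I(M)$ are fine, the subadditivity of $\ap_R$ on the short exact sequence $0 \to \Gamma_I(M) \to M \to M/\Gamma_I(M) \to 0$ does hold for arbitrary modules (if $P=\ann_R(x)$ and $Rx\cap M'\neq 0$, any nonzero $ax\in M'$ again has annihilator $P$), and your two nontrivial steps are sound: the finite-generation of $I$ gives $I^{n+1}x=0$ when $Ix\subseteq\Gamma_I(M)$, and the localization step is legitimate because $s\in I\setminus P$ kills $\Gamma_I(M)_P$, while the equivalence $P\in\ap_R(M)\Leftrightarrow PR_P\in\ap_{R_P}(M_P)$ only needs $P$ finitely generated, which Noetherianness supplies even for non-finitely-generated $M$. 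Two small remarks: the claim $P\notin V(I)$ is also immediate from your first equality applied to the $I$-torsion-free module $M/\Gamma_I(M)$, since $\Gamma_I(M/\Gamma_I(M))=0$; and the localization can be avoided entirely by writing $P=(p_1,\dots,p_k)$, noting each $p_i x\in\Gamma_I(M)$ is killed by $s^{n_i}$ for your $s\in I\setminus P$, and checking directly that $\ann_R(s^n x)=P$ for $n=\max n_i$. Either way, your proof is complete and valid in the stated generality.
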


\begin{cor}
  Let $\{M_n\}_{n \geq 0}$ be a sequence of modules in $\fgmod(R)$ such that
  the sets $\ap_R(M_n)$ stabilize. Let $I \subseteq R$ be an ideal. If
  $I^n \neq I^{n+1}$ for any $n$, then the functor $\id\!/\Gamma_I$ is finitely
  generated but not coherent, and $\Gamma_I$ is not finitely generated.
  However, whether or not $I^n = I^{n+1}$ for any $n$, the sets
  $\ap_R(M_n/\Gamma_I(M_n))$ and $\ap_R \Gamma_I(M_n)$ always stabilize.
\end{cor}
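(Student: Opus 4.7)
The plan is to handle the two parts of the statement separately: the purely functorial claims about $\id\!/\Gamma_I$ and $\Gamma_I$ under the hypothesis $I^n \ne I^{n+1}$ for all $n$, and the unconditional stabilization of the associated-prime sets.

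The stabilization of $\ap$-sets is the easier half and I would dispose of it first. The lemma stated immediately before the corollary gives, for every $R$-module $M$, the identities $\ap_R \Gamma_I(M) = \ap_R(M) \cap V(I)$ and $\ap_R(M/\Gamma_I(M)) = \ap_R(M) \setminus V(I)$. Applying these pointwise to the sequence $\{M_n\}$ and using the standing assumption that $\ap_R(M_n)$ is eventually equal to some fixed set $A$, we read off that $\ap_R \Gamma_I(M_n)$ is eventually $A \cap V(I)$ and $\ap_R(M_n/\Gamma_I(M_n))$ is eventually $A \setminus V(I)$. No use is made of whether $I^n = I^{n+1}$, so this part is unconditional as required.

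For the functorial claims, observe that $h_R \cong \id$ by Yoneda's lemma (Lemma~\ref{lem:Yon} applied to $M = R$), and the natural quotient $\id \to \id\!/\Gamma_I$ realises $\id\!/\Gamma_I$ as the image of $h_R$ in $\rlin$, so $\id\!/\Gamma_I$ is finitely generated. Under the hypothesis $I^n \ne I^{n+1}$ for all $n$, Corollary~\ref{cor:lc} tells us directly that $\Gamma_I$ is not finitely generated.

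The main point is the non-coherence of $\id\!/\Gamma_I$, which I would argue by contradiction. Suppose $\id\!/\Gamma_I$ were coherent. Since $\id = h_R$ is representable and hence coherent, the natural transformation $\id \to \id\!/\Gamma_I$ becomes a morphism of coherent functors in $\rlin$. The theorem of Hartshorne quoted in the excerpt states that the kernel of a natural transformation between coherent functors is itself coherent; the kernel here is exactly $\Gamma_I$, so coherence of $\id\!/\Gamma_I$ would force $\Gamma_I$ to be coherent and in particular finitely generated, contradicting Corollary~\ref{cor:lc}. The main obstacle is really just identifying the correct tool (closure of coherent functors under kernels) and recognising that $0 \to \Gamma_I \to \id \to \id\!/\Gamma_I \to 0$ is a sequence in $\rlin$; once this is in place the contradiction is immediate.
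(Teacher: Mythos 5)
Your proposal is correct and follows exactly the route the paper intends: the corollary is stated without explicit proof precisely because it is immediate from the preceding lemma on $\ap_R\Gamma_I(M)$ and $\ap_R(M/\Gamma_I(M))$, Corollary~\ref{cor:lc}, and the quoted theorem of Hartshorne on kernels of maps of coherent functors, which are the three ingredients you use (including the kernel argument $0 \to \Gamma_I \to \id \to \id\!/\Gamma_I$ for non-coherence). The only cosmetic quibble is that $h_R \cong \id$ is just the standard natural isomorphism $\bighom_R(R,-) \cong \id$ rather than an application of Lemma~\ref{lem:Yon}, but this does not affect the argument.
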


Now we consider our second example.

\begin{lem} \label{lem:cmc}
  Let $R$ be a ring, possibly noncommutative, with 1. Let $S \subseteq R$ and
  $f \colon S \times S \to R$ be a function. The following are equivalent:
  \begin{enumerate}[label=(\alph*),align=left,leftmargin=*,nosep]
    \item For every $r,s \in S$, left $R$-module $M$ and $m \in M$, if $rm = 0$,
    then $f(r,s)m = f(s,r)m = 0$. \label{item:anymod}
    \item For every $r,s \in S$ we have $f(r,s) \in Rr \cap Rs$.
    \label{item:justr}
  \end{enumerate}
\end{lem}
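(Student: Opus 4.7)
The plan is to prove the two implications separately, with the harder direction being detected by suitable cyclic left $R$-modules of the form $R/Rr$.

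For the direction \ref{item:justr} $\Rightarrow$ \ref{item:anymod}, I would just unwind the definitions. If $f(r,s) \in Rr$, write $f(r,s) = ar$ for some $a \in R$; then whenever $rm = 0$ in a left $R$-module $M$ we get $f(r,s) m = a(rm) = 0$ by associativity. The same argument applied to $f(s,r) \in Rr$ (using only the $Rr$ half of $Rs \cap Rr$) gives $f(s,r) m = 0$.

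For the harder direction \ref{item:anymod} $\Rightarrow$ \ref{item:justr}, the idea is to feed $M = R/Rr$ and $M = R/Rs$ into the hypothesis so as to detect membership in each left ideal. Fix $r, s \in S$. First, consider $M = R/Rr$ (a left $R$-module because $Rr$ is a left ideal) and $m = 1 + Rr$; then $rm = 0$, so \ref{item:anymod} yields $f(r,s) m = 0$ and $f(s,r) m = 0$, i.e.\ $f(r,s), f(s,r) \in Rr$. Next, apply \ref{item:anymod} with the roles of the two arguments swapped — which is legitimate, since \ref{item:anymod} is required to hold for every ordered pair in $S \times S$ — to $M = R/Rs$ and $m = 1 + Rs$; now $sm = 0$ gives $f(s,r), f(r,s) \in Rs$. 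Combining the two conclusions shows $f(r,s) \in Rr \cap Rs$, as required.

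I do not anticipate a genuine obstacle here: the only nonformal step is recognizing that the free cyclic module $R/Rr$ with generator $1$ is the universal example of a left $R$-module annihilated by $r$, so that membership in the left ideal $Rr$ is equivalent to annihilating that generator. Once that observation is in place, both inclusions $f(r,s) \in Rr$ and $f(r,s) \in Rs$ follow by one application each of hypothesis \ref{item:anymod}.
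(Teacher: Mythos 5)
Your proposal is correct and follows essentially the same route as the paper: testing the hypothesis on the cyclic modules $R/Rr$ and $R/Rs$ with generator $\bar 1$ to detect membership in $Rr$ and $Rs$, and unwinding $f(r,s)=ar$ for the converse. The only difference is that you make explicit the swap of the pair $(r,s)$ when using $M=R/Rs$, which the paper leaves implicit under ``similarly.''
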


\begin{proof}
  \ref{item:anymod} $\Rightarrow$ \ref{item:justr}:
  Let $r,s \in S$ and $M = R/Rr$. Then $r\bar{1} = \bar{0}$. By assumption,
  we have $f(r,s)\bar{1} = \bar{0}$, so $f(r,s) \in Rr$. Similarly,
  with $M = R/Rs$ we have $f(r,s) \in Rs$, so that $f(r,s) \in Rr \cap Rs$.
  
  \ref{item:justr} $\Rightarrow$ \ref{item:anymod}:
  Let $r,s \in S$ and $m \in M$. By assumption, $f(r,s), f(s,r) \in Rr$. So
  if $rm = 0$, then $f(r,s)m,f(s,r)m \in Rrm = 0$.
\end{proof}

\begin{eg}
  Let $R$ be a UFD, $S = R$ and $f \colon R \times R \to R$.
  Then $f$ satisfies the conditions in Lemma~\ref{lem:cmc} iff
  for all $r,s \in R$ we have $f(r,s) \in (\lcm(r,s))$.
\end{eg}

\begin{defn} Let $R$ be a commutative ring with 1.
  \begin{enumerate}[label=(\arabic*),align=left,leftmargin=*,nosep]
    \item We say that a subset $S \subseteq R$ is
    common multiplicatively closed if $S \neq \emptyset$ and there is
    a function $f \colon S \times S \to S$ satisfying any condition in
    Lemma~\ref{lem:cmc}, or equivalently, for any $r,s \in S$ there is
    $f(r,s) \in S$ that satisfies any condition in Lemma~\ref{lem:cmc}.
    \item We say that a (nonempty) subset $S \subseteq R$ is coprincipal
    if there is $s \in S$ such that $s \in \bigcap_{r \in S} Rr$.
    Such an $s$ is called a cogenerator of $S$.
    \item For any $S \subseteq R$ and $M \in \bigmod(R)$, we let
    $\tau_S(M) = \{ m \in M \mid rm = 0 \text{ for some } r \in S \}$.
    If $S$ is \cmc, then $\tau_S(M)$ is a submodule of $M$.
  \end{enumerate}
\end{defn}

\begin{eg}
  \begin{enumerate}[label=(\arabic*),align=left,leftmargin=*,nosep]
    \item Any singleton subset of $R$ is common multiplicatively closed.
    \item In general, any coprincipal subset $S \subseteq R$ is common
    multiplicatively closed, since if $s \in S$ is a cogenerator,
    then we can let $f(r,t) = s$ for all $r,t \in S$.
    \item Conversely, if $S = \{ s_1, \dots, s_n \} \subseteq R$ is common
    multiplicatively closed, then $S$ has a cogenerator
    $f( \cdots f(f(s_1,s_2),s_3),\dots,s_n)$.
    \item Any multiplicatively closed subset of $R$ is common
    multiplicatively closed.
    \item If $r,s \in \zln$ and $(0) \neq (s) \subsetneq (r)$, then the
    subset $\{r,s\}$ of $\zln$ is common multiplicatively closed and
    coprincipal but not multiplicatively closed.
    \item Let $a \in \zln$ such that $a \neq 0,\pm 1$. Let $S = \{ a^2 \}
    \cup \{ a^{8+12n} \mid n \geq 0 \}$. Then $S$ is a common multiplicatively
    closed subset of $\zln$ by the function $f(s,t) = (st)^2$,
    and $S$ is neither multiplicatively closed nor coprincipal.
    \item Let $a \in \zln$ such that $a \neq 0,\pm 1$. Then the infinite
    multiplicatively closed subset $S=\{ a^{-n} \mid n \geq 0 \}$ of $\zln_a$
    is coprincipal with 1 as a cogenerator; the subset $\{ a^n \mid n \geq 0 \}$
    of $\zln$ is not. If $i \geq 0$ and $i \neq 1$, then $S \setminus \{a^{-i}\}
    \subseteq \zln_a$ is coprincipal but not \mc.
    \item Let $R_1,R_2$ be rings and $u$ be a unit in $R_1$. Let
    $S \subseteq R_1 \times R_2$ be the subset $\{ (u^n,r) \mid n \geq 1 \}
    \cup \{(1,1)\}$. If $u^n \neq 1$ for any $n \geq 1$, or if $R_2$ is infinite,
    then $S$ is infinite, multiplicatively closed and coprincipal with
    cogenerator $(u,0)$.
  \end{enumerate}
\end{eg}

\begin{rmk}
  We have now seen that:
  \begin{itemize}[align=left,leftmargin=*,nosep]
    \item Coprincipal $\Rightarrow$ common multiplicatively closed
    \item If $S$ is finite, then $S$ is coprincipal $\Leftrightarrow$
    $S$ is common multiplicatively closed
    \item Multiplicatively closed $\Rightarrow$ common multiplicatively closed
    \item Coprincipal and multiplicatively closed do not imply or refute each other
    \item Common multiplicatively closed $\nRightarrow$ coprincipal
    \item Common multiplicatively closed $\nRightarrow$ multiplicatively closed
  \end{itemize}
\end{rmk}

\begin{cor}
  Let $R$ be a Noetherian ring and $S$ be a \cmc subset of $R$. The
  following are equivalent:
  \begin{enumerate}[label=(\alph*),align=left,leftmargin=*,nosep]
    \item $\tau_S$ is representable.
    \item $\tau_S$ is finitely generated.
    \item S is coprincipal.
  \end{enumerate}
\end{cor}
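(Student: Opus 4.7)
The plan is to mirror the proof of Corollary~\ref{cor:lc} by exhibiting $\tau_S$ as a filtered direct limit of representable functors and invoking Corollary~\ref{cor:dirlim}. First I would partially order $S$ by declaring $r \preceq s$ iff $Rs \subseteq Rr$; the defining function $f\colon S \times S \to S$, which satisfies $f(r,s) \in Rr \cap Rs$, provides an upper bound for any two elements, so $(S,\preceq)$ is directed. For $r \preceq s$ the canonical surjection $R/Rs \twoheadrightarrow R/Rr$ induces a natural transformation $h_{R/Rr} \to h_{R/Rs}$, and under the natural isomorphism $h_{R/Rr}(M) \cong (0 :_M r)$ the colimit of this system computes pointwise as
\[
  \varinjlim_{r \in S} h_{R/Rr}(M) \;\cong\; \bigcup_{r \in S} (0 :_M r) \;=\; \tau_S(M).
\]
Hence $\tau_S \cong \varinjlim_{r \in S} h_{R/Rr}$ in $\rlin$, and each structure map $T_r\colon h_{R/Rr} \to \tau_S$ is injective on every $M$ since any $\phi \in h_{R/Rr}(M)$ is determined by $\phi(\bar 1)$.

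The implication (a) $\Rightarrow$ (b) is trivial, and (c) $\Rightarrow$ (a) is immediate from the setup: if $s \in S$ is a cogenerator then $(0 :_M r) \subseteq (0 :_M s)$ for every $r \in S$, so $\tau_S(M) = (0 :_M s) \cong h_{R/Rs}(M)$.

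For (b) $\Rightarrow$ (c), the main step, Corollary~\ref{cor:dirlim} combined with the injectivity of the $T_r$ yields $\tau_S = h_{R/Rs_0}$ for some $s_0 \in S$. Evaluating at $R/Rr$ for an arbitrary $r \in S$, I note that $\bar 1 \in R/Rr$ is killed by $r \in S$, so $\tau_S(R/Rr) = R/Rr$; on the other hand
\[
  h_{R/Rs_0}(R/Rr) \;\cong\; \{\,x + Rr : s_0 x \in Rr\,\}.
\]
Equating these forces $s_0 R \subseteq Rr$, in particular $s_0 \in Rr$. Hence $s_0 \in \bigcap_{r \in S} Rr$, exhibiting $s_0$ as a cogenerator of $S$. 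The only real obstacle is checking that $(S,\preceq)$ is directed, which is precisely what the common multiplicatively closed hypothesis was introduced to secure; the rest is a straightforward adaptation of Corollary~\ref{cor:lc}.
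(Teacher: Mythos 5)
Your argument is correct and follows essentially the same route as the paper: both express $\tau_S$ as a filtered colimit of the representable functors $h_{R/Rr}$ (directedness coming exactly from the \cmc hypothesis via $f(r,s) \in Rr \cap Rs$), invoke Corollary~\ref{cor:dirlim} to extract a single $s_0$ with $\tau_S \cong h_{R/Rs_0}$, and then test at $M = R/Rr$ to conclude $s_0 \in \bigcap_{r \in S} Rr$. The only cosmetic difference is that you index the system by elements of $S$ with the divisibility preorder while the paper indexes by the ideals $(s)$ ordered by reverse inclusion.
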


\begin{proof}
  First, we note that for all $M \in \bigmod(R)$, $\tau_S(M)
  = \varinjlim_{Rs} \bighom_R(R/(s),M) = \varinjlim_{Rs} (0:_M s)
  = \bigcup_s (0:_M s)$,
  where $s$ runs through $S$ and $(s) \geq (t)$
  iff $(s) \subseteq (t)$. So by Corollary~\ref{cor:dirlim},
  $\tau_S$ is finitely generated iff there exists $s \in S$ such that
  $\tau_S(M) = \bighom_R(R/(s),M)$ for all $M \in \fgmod(R)$ iff
  there exists $s \in S$ such that $(s) \subseteq (r)$ for all $r \in S$
  by considering $M = R/(r)$ for ``only if''.
\end{proof}

\begin{notn}
  We let $R^{\times}$ denote the set of units of a ring $R$.
\end{notn}

\begin{lem}
  Let $R$ be a ring and $S$ be a subset of $R$. Consider the following statements.
  \begin{enumerate}[label=(\alph*),align=left,leftmargin=*,nosep]
    \item S is coprincipal. \label{item:coprin}
    \item There are rings $R_1,R_2$ such that $R = R_1 \times R_2$, $S \cap
    (R_1)^{\times} \neq \emptyset$ and for all $s \in S$ we have
    $s(1,0) \in (R_1)^{\times}$.
    \label{item:idem}
  \end{enumerate}
  Then \ref{item:idem} $\Rightarrow$ \ref{item:coprin}. If $S$ is furthermore
  \mc, then \ref{item:coprin} $\Rightarrow$ \ref{item:idem}.
\end{lem}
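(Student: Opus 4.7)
The plan is to treat the two implications separately, with the product decomposition of $R$ in \ref{item:idem} being the key structure to either exploit or build.

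For \ref{item:idem} $\Rightarrow$ \ref{item:coprin}, I would pick any $s_0 \in S \cap (R_1)^{\times}$, which under the identification $R = R_1 \times R_2$ has the form $(u_0, 0)$ with $u_0 \in (R_1)^{\times}$, and claim that $s_0$ is a cogenerator. For any $t \in S$, the hypothesis $t(1,0) \in (R_1)^{\times}$ forces the first coordinate of $t = (a_t, b_t)$ to be a unit in $R_1$, and then $s_0 = t \cdot (a_t^{-1} u_0, 0) \in Rt$. This direction uses neither multiplicative closure nor any construction beyond manipulation in the product.

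The substantive direction is \ref{item:coprin} $\Rightarrow$ \ref{item:idem} under the additional hypothesis that $S$ is multiplicatively closed; here the task is to manufacture an idempotent out of the cogenerator. Let $s \in S$ be a cogenerator. Since $S$ is multiplicatively closed, $s^2 \in S$, so coprincipality furnishes some $y \in R$ with $s = y s^2$. The candidate idempotent is $e := ys$: a direct check gives $e^2 = y^2 s^2 = y(y s^2) = ys = e$, and moreover $es = y s^2 = s$, so the decomposition $R = eR \times (1-e)R$ sends $s$ to $(s,0)$.

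It then remains to verify the two conditions of \ref{item:idem} for $R_1 := eR$ (with identity $e$) and $R_2 := (1-e)R$. First, $s$ is a unit in $R_1$ because $s(ey) = sye = e^2 = e$, so $s \in S \cap (R_1)^{\times}$. Second, given any $t \in S$, multiplicative closure gives $st \in S$, and coprincipality provides $w \in R$ with $s = wst$; multiplying through by $y$ yields $e = ys = (ys)(wt) = ewt$, whence $(te)(we) = twe^2 = twe = ewt = e$, so $te$ is a unit in $R_1$, i.e., $t(1,0) \in (R_1)^{\times}$. The main obstacle — and really the only nontrivial ingredient — is spotting the construction $e := ys$, which simultaneously produces the required idempotent and encodes the cogeneration relation in a form that, by multiplicative closure, transfers to every element of $S$.
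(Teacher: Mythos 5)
Your proposal is correct and follows essentially the same route as the paper: the easy direction is the same direct computation in the product, and for the converse you build the idempotent $e=ys$ from the cogenerator $s$ and the relation $s=ys^2$ (using $s^2\in S$), take $R_1=eR$, $R_2=(1-e)R$, and verify the unit conditions exactly as the paper does with its idempotent $re$. The only differences are notational and the minor algebraic rearrangement in checking that $te$ is a unit, which matches the paper's computation in substance.
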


\begin{proof}
  \ref{item:idem} $\Rightarrow$ \ref{item:coprin}: Let $(u,0) \in S \cap
  (R_1)^{\times}$ and $s \in S$. Since $s(1,0) \in (R_1)^{\times}$,
  $(u,0) \in Rs$. Therefore $(u,0)$ is a cogenerator of $S$.
  
  Now suppose that $S$ is \mc and coprincipal with cogenerator $e$.
  Since $S$ is \mc, $e^2 \in S$. Since $e$ is a cogenerator of $S$,
  $e=re^2$ for some $r \in R$. Then $(re)^2 = r(re^2) = re$, so $re$
  is idempotent. Let $R_1 = R(re)$ and $R_2 = R(1-re)$, so that
  $R = R_1 \times R_2$.
  Then $e(re) = re^2 = e$, so $e \in R_1$, and $e(r^2e) = (re)^2 = re$, so
  $e \in S \cap (R_1)^{\times}$. Finally, let $s \in S$. Then $e = r's$
  for some $r' \in R$, and $(r'r^2e)(sre) = (re)^3 = re$, so $sre
  \in (R_1)^{\times}$.
\end{proof}

\begin{lem} \label{lem:taus}
  Let $R$ be a ring, $S \subseteq R$ and $M \in \bigmod(R)$. If $\tau_S(M)$
  is a submodule of $M$, then $\ap_R(\tau_S(M)) = \{ P \in \ap_R(M) \mid
  P \cap S \neq \emptyset \}$. If $R$ is Noetherian and $S$ is a \mc
  subset of $R$, then $\ap_R(M/\tau_S(M)) = \{ P \in \ap_R(M) \mid
  P \cap S = \emptyset \}$.
\end{lem}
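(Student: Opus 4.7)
The first assertion is essentially bookkeeping from the definitions. For the forward inclusion, any $P\in\ap_R(\tau_S(M))$ has the form $P=\ann(m)$ for some nonzero $m\in\tau_S(M)$; by definition of $\tau_S$ there is $r\in S$ with $rm=0$, hence $r\in P\cap S$, and of course $P\in\ap_R(M)$ because $m\in M$. Conversely, if $P=\ann(m)\in\ap_R(M)$ and $r\in P\cap S$, then $rm=0$ forces $m\in\tau_S(M)$, so $P\in\ap_R(\tau_S(M))$.

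For the second assertion, let $R$ be Noetherian and $S$ multiplicatively closed. The inclusion $\supseteq$ goes first. Take $P=\ann(m)\in\ap_R(M)$ with $P\cap S=\emptyset$. If $m\in\tau_S(M)$, some $s\in S$ kills $m$, giving $s\in P\cap S$, a contradiction; so $\bar m\neq 0$ in $M/\tau_S(M)$. Clearly $P\subseteq\ann(\bar m)$. For the reverse, if $r\bar m=0$, then $rm\in\tau_S(M)$, so $s r m=0$ for some $s\in S$; thus $sr\in P$, and since $P$ is prime with $s\notin P$, we conclude $r\in P$.

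The nontrivial inclusion is $\subseteq$, and this is where I expect the real work. Fix $P\in\ap_R(M/\tau_S(M))$, say $P=\ann(\bar m)$ with $\bar m\neq 0$ (so $m\notin\tau_S(M)$). I would first rule out $P\cap S\neq\emptyset$: if $s\in P\cap S$, then $sm\in\tau_S(M)$, so $tsm=0$ for some $t\in S$; multiplicative closure gives $ts\in S$, forcing $m\in\tau_S(M)$, contradicting $\bar m\neq 0$. The harder half is exhibiting $P$ as the annihilator of some element of $M$ itself. Here is where the Noetherian hypothesis enters: write $P=(r_1,\dots,r_k)$, and for each $i$ pick $s_i\in S$ with $s_ir_im=0$; let $s=s_1\cdots s_k\in S$. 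Then $sr_im=0$ for every generator, so $sPm=0$, i.e.\ $P\subseteq\ann(sm)$. Note $sm\neq 0$ since $m\notin\tau_S(M)$. Conversely, if $r\in\ann(sm)$, then $s(rm)=0$ shows $rm\in\tau_S(M)$, whence $r\in P$. Therefore $P=\ann(sm)\in\ap_R(M)$, completing the proof.

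The main obstacle is this last step: passing from an associated prime of $M/\tau_S(M)$ to one of $M$ is false for arbitrary submodules, and it works here only because (i) $P$ can be generated by finitely many elements (the Noetherian hypothesis), and (ii) finitely many witnesses $s_i\in S$ can be combined into a single witness $s=\prod s_i\in S$ (multiplicative closure). The rest reduces to a direct annihilator computation.
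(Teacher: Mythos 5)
Your proof is correct, and it supplies exactly the routine verification that the paper leaves implicit (the lemma is stated there without proof): the first equality and the inclusion $\supseteq$ are annihilator bookkeeping, and the only point needing the hypotheses is realizing $P=\ann_R(\bar m)\in\ap_R(M/\tau_S(M))$ as $\ann_R(sm)$ for a single $s\in S$, which you handle correctly by taking finitely many generators $r_1,\dots,r_k$ of $P$ (Noetherian) and combining the witnesses into $s=s_1\cdots s_k\in S$ (multiplicative closure). No gaps; this is the expected argument.
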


\begin{rmk}
  The second half of Lemma~\ref{lem:taus} is false if $S$ is not \mc.
  For example, let $R = \zln$, $S = \{p\}$ where $p$ is prime, and
  $M = \zln/(p^2)$. Then $\ap_R(M/\tau_S(M)) = \{(p)\}$, but $(p) \cap S
  \neq \emptyset$.
\end{rmk}

\begin{cor}
  Let $R$ be a Noetherian ring, $S$ be a \mc subset of $R$ and
  $\{M_n\}_{n \geq 0}$ be a sequence of modules in $\fgmod(R)$ such that
  the sets $\ap_R(M_n)$ stabilize. If $S$ is not coprincipal, then
  the functor $\id\!/\tau_S$ is finitely generated but not coherent,
  and $\tau_S$ is not finitely generated. However, whether or not
  $S$ is coprincipal, the sets $\ap_R(M_n/\tau_S(M_n))$
  and $\ap_R(\tau_S(M_n))$ always stabilize.
\end{cor}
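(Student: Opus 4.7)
The plan is to dispatch the three claims in turn, relying on the preceding corollary characterizing finite generation of $\tau_S$, on Lemma~\ref{lem:taus}, and on \cite[Theorem~1.1(a)]{Har} quoted above. First, since every multiplicatively closed subset is common multiplicatively closed, the preceding corollary yields that $\tau_S$ is finitely generated iff $S$ is coprincipal; thus under our hypothesis $\tau_S$ is not finitely generated.

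Next, for $\id\!/\tau_S$: with $R$ Noetherian and $S$ multiplicatively closed, $\tau_S(M)$ is a (finitely generated) submodule of any $M \in \fgmod(R)$, so both $\tau_S$ and $\id\!/\tau_S$ are objects of $\rlin$, and the obvious natural transformation $\id \to \id\!/\tau_S$ is pointwise surjective. Composing with the isomorphism $h_R \xrightarrow{\sim} \id$ from Lemma~\ref{lem:Har}\ref{item:nat} produces a surjection $h_R \twoheadrightarrow \id\!/\tau_S$, so $\id\!/\tau_S$ is finitely generated. If it were also coherent, then since $\id \cong h_R$ is coherent, \cite[Theorem~1.1(a)]{Har} would force $\tau_S = \ker(\id \to \id\!/\tau_S)$ to be coherent and hence finitely generated, contradicting the previous paragraph.

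For the stability claims, Lemma~\ref{lem:taus} gives
\[
\begin{aligned}
\ap_R(\tau_S(M_n)) &= \{P \in \ap_R(M_n) \mid P \cap S \neq \emptyset\},\\
\ap_R(M_n/\tau_S(M_n)) &= \{P \in \ap_R(M_n) \mid P \cap S = \emptyset\};
\end{aligned}
\]
here the multiplicative closure of $S$, not merely common multiplicative closure, is used for the second identity. Both sets are obtained from $\ap_R(M_n)$ by intersecting with a fixed subset of $\spec(R)$, so the hypothesized stabilization of $\ap_R(M_n)$ immediately yields the stabilization of the two derived sequences, whether or not $S$ is coprincipal.

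The only subtle point is checking that the categorical kernel of $\id \to \id\!/\tau_S$ computed in $\rlin$ really is the pointwise subfunctor $\tau_S$; this is automatic because kernels in a functor category are computed pointwise and, as noted, $\tau_S(M) \subseteq M$ is always finitely generated. With that identification in hand, the application of \cite[Theorem~1.1(a)]{Har} is formal and everything else is bookkeeping.
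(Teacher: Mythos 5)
Your proposal is correct and follows exactly the deduction the paper intends (the corollary is stated without proof there): the characterization that $\tau_S$ is finitely generated iff $S$ is coprincipal (using that multiplicatively closed implies common multiplicatively closed), the surjection $h_R \cong \id \twoheadrightarrow \id\!/\tau_S$ for finite generation, Hartshorne's Theorem~1.1(a) applied to the kernel $\tau_S$ of $\id \to \id\!/\tau_S$ to rule out coherence, and Lemma~\ref{lem:taus} to reduce the stability claims to intersecting the stabilizing sets $\ap_R(M_n)$ with fixed subsets of $\spec(R)$. Your remark that the second identity in Lemma~\ref{lem:taus} genuinely needs $S$ multiplicatively closed (not just \cmc) is exactly the right point of care, so nothing is missing.
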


\section{Covariant functors over a Dedekind domain}
\label{sec:dd}

In Section 2, we saw that the sets $\ap_R F(M/I^n M)$ stabilize whenever $F$ is
a coherent functor. One may ask whether such asymptotic stability still holds
when $F$ is not coherent. In this section, we consider the case where $R$ is
a Dedekind domain. We will see that if $F$ is a finitely generated functor over
$R$, then the sets $\ap_R F(M/I^n M)$ stabilize. We then construct a family of examples
of $R$-linear covariant functors $F$ such that the sets $\ap_R F(R/I^n)$ do not stabilize.

\begin{lem} \label{lem:ann}
  Let $R$ be a ring, $F$ be an $R$-linear functor from $\bigmod(R)$
  to itself and $M \in \bigmod(R)$. Then $\ann_R(M) \subseteq
  \ann_R(F(M))$.
\end{lem}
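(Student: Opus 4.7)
The plan is to exploit the $R$-linearity of $F$ together with the fact that any additive functor sends the zero morphism to the zero morphism. Specifically, I would take an arbitrary $r \in \ann_R(M)$ and consider the ``multiplication by $r$'' map $\mu_r \colon M \to M$ defined by $\mu_r(m) = rm$. The hypothesis $r \in \ann_R(M)$ says exactly that $\mu_r$ is the zero endomorphism of $M$, so $F(\mu_r)$ is the zero endomorphism of $F(M)$ (this uses only additivity of $F$, which is implied by $R$-linearity).

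On the other hand, $R$-linearity of $F$ means that for any $L, N \in \bigmod(R)$ the map $F \colon \bighom_R(L,N) \to \bighom_R(F(L),F(N))$ is a homomorphism of $R$-modules. Applied to $L=N=M$ and the element $\mu_r = r \cdot \id_M \in \bighom_R(M,M)$, this gives $F(\mu_r) = F(r \cdot \id_M) = r \cdot F(\id_M) = r \cdot \id_{F(M)}$, which is the multiplication-by-$r$ endomorphism of $F(M)$.

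Combining the two identifications of $F(\mu_r)$ yields $r \cdot \id_{F(M)} = 0$, i.e.\ $r \in \ann_R(F(M))$. Since $r$ was arbitrary, this gives the desired containment $\ann_R(M) \subseteq \ann_R(F(M))$. There is no real obstacle here; the only point that merits a sentence in the write-up is the observation that ``multiplication by $r$'' on any $R$-module is literally the scalar $r$ acting on the identity endomorphism, so that $R$-linearity of $F$ transports the scalar $r$ through $F$.
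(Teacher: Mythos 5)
Your argument is correct: for $r \in \ann_R(M)$ the map $r\cdot\id_M$ is zero, and $R$-linearity gives $0 = F(r\cdot\id_M) = r\cdot\id_{F(M)}$, so $r \in \ann_R(F(M))$. The paper states this lemma without proof, and your write-up is exactly the standard one-line argument it implicitly relies on; the slight redundancy of invoking additivity separately (linearity already forces $F(0)=0$) is harmless.
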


\begin{thm} \label{thm:dd}
  Let $R$ be a Dedekind domain, $I$ be an ideal of $R$, $M \in \fgmod(R)$
  and $F$ be a finitely generated functor. Then the sets $\ap_R F(M/I^n M)$
  stabilize.
\end{thm}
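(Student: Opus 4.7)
My plan is to reduce the problem, by standard structural moves, to the claim that for each maximal ideal $P \subseteq R$ the Boolean sequence $F(R/P^k) \neq 0$ is eventually constant as $k \to \infty$, and then to prove this claim by analyzing the generators of $F(R/P^k)$ coming from a fixed presentation of $F$. The cases $I = 0$ and $I = R$ are trivial, so I assume $I$ is a nonzero proper ideal.

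For the reduction, Lemma~\ref{lem:ann} forces $\ap_R F(M/I^n M) \subseteq V(I)$, a finite set $\{P_1,\ldots,P_r\}$ of maximal ideals since $R$ is one-dimensional. Writing $I = P_1^{e_1}\cdots P_r^{e_r}$ and using the Chinese Remainder Theorem together with the additivity of $F$ (automatic from $R$-linearity), we obtain $F(M/I^n M) \cong \bigoplus_i F(M/P_i^{ne_i} M)$; each summand is annihilated by $P_i^{ne_i}$ by Lemma~\ref{lem:ann}, so contributes $\{P_i\}$ to $\ap_R F(M/I^n M)$ precisely when it is nonzero. Writing $M = R^a \oplus T$ with $T$ of finite length by the structure theorem, one computes $M/P^k M = (R/P^k)^a \oplus T/P^k T$, with $T/P^k T$ equal to the $P$-primary component $T_P$ for $k \gg 0$; additivity then gives $F(M/P^k M) \cong F(R/P^k)^a \oplus F(T_P)$ with $F(T_P)$ fixed. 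It therefore suffices to show that $F(R/P^k) \neq 0$ is eventually constant in $k$.

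To prove this, fix a surjection $T\colon h_K \twoheadrightarrow F$ with $K = R^b \oplus \bigoplus_{j=1}^{s} R/Q_j^{m_j}$. Since $\bighom_R(R/Q_j^{m_j}, R/P^k) = 0$ whenever $Q_j \neq P$, for $k$ larger than every $m_j$ with $Q_j = P$ the $R$-module $\bighom_R(K, R/P^k)$ has a standard finite generating set consisting of the $b$ coordinate projections $R^b \to R \xrightarrow{\pi_k} R/P^k$ and, for each $j$ with $Q_j = P$, the socle embedding $\iota_{m_j, k}\colon R/P^{m_j} \hookrightarrow R/P^k$. Applying Yoneda (Lemma~\ref{lem:Yon}) via $T$ produces an $R$-module generating set of $F(R/P^k)$:
\[
  u_\alpha^{(k)} = F(\pi_k)\bigl(x_0^{(\alpha)}\bigr)\quad (1 \leq \alpha \leq b),\qquad
  v_j^{(k)} = F(\iota_{m_j, k})(y_j) \quad (Q_j = P),
\]
where $x_0^{(\alpha)} \in F(R)$ and $y_j \in F(R/P^{m_j})$ are the fixed Yoneda elements extracted from the corresponding summands of $K$. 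Hence $F(R/P^k) = 0$ is equivalent to the finite conjunction of the Boolean conditions $u_\alpha^{(k)} = 0$ and $v_j^{(k)} = 0$.

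The conclusion follows from a dual monotonicity of these conditions. For $l \geq k$ the factorization $\pi_k = (R/P^l \twoheadrightarrow R/P^k) \circ \pi_l$ yields $\ker F(\pi_l) \subseteq \ker F(\pi_k)$ inside $F(R)$, so $\{k : u_\alpha^{(k)} = 0\}$ is downward-closed in $\nn$ and is therefore either finite or all of $\nn$, making the condition eventually constant. Dually, $\iota_{m_j, k} = \iota_{l, k} \circ \iota_{m_j, l}$ for $m_j \leq l \leq k$ shows that $v_j^{(l)} = 0$ implies $v_j^{(k)} = 0$, so $\{k : v_j^{(k)} = 0\}$ is upward-closed and eventually constant. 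A finite conjunction of eventually constant Boolean sequences is eventually constant, which finishes the proof. The main technical subtlety is recognizing this dual monotonicity: although $F(\pi_k)$ need not be surjective nor $F(\iota_{m_j, k})$ injective, the one-sided monotonicity of the ``generator-vanishes'' Boolean is nonetheless enough for stability.
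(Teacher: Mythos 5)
Your argument is correct and is essentially the paper's own: after the same reduction (Lemma~\ref{lem:ann}, additivity of $F$, the structure theorem) to the eventual constancy of the condition $F(R/P^k)\neq 0$, your two monotonicity observations --- vanishing coming from the projective part of $h_K(R/P^k)$ propagates downward via the surjections $R/P^l \twoheadrightarrow R/P^k$, while vanishing coming from a torsion summand $h_{R/P^{m}}(R/P^k)$ propagates upward via the maps $p^{k-m}\colon R/P^{m} \to R/P^k$ --- are exactly the content of Steps 2--4 of the paper's proof, repackaged in terms of Yoneda generators and Boolean stability. The only blemish is your use of the structure theorem in the form ``free $\oplus$ torsion'': over a Dedekind domain the projective parts of $M$ and of $K$ are direct sums of ideals $J$ rather than copies of $R$, but this is harmless, since $J/P^kJ \cong R/P^k$ handles $M$, and for $K$ you may either pass to $h_{K\oplus Q'} \twoheadrightarrow h_K \twoheadrightarrow F$ with $Q'$ a projective complement making the projective part free, or observe that $\bighom_R(J,R/P^k)$ is cyclic with generators compatible under the projections because $J$ is projective, exactly as in the paper's Step~2.
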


\begin{proof}
  The proof will proceed in several steps.
  \separate
  
  \textit{Step 1.} First, we will make some reductions. Since $F$ is
  $R$-linear, it preserves finite direct sums. By the structure theorem
  for finitely generated modules over a Dedekind domain, we may assume
  that $M = J$ is an ideal of $R$ or $M = R/P^i$ for some maximal ideal
  $P$ of $R$ and $i \geq 1$. If $M = R/P^i$, then either $M/I^n M = 0$
  for all $n$ or $M/I^n M = M$ for all $n \geq i$. If $0 \neq M = J
  \subseteq R$ and $I \neq 0$, then $M/I^n M \cong R/I^n$ for all $n \geq 1$.
  But $R/I^n$ is again a direct sum of modules of the form $R/P^{ni}$.
  So it suffices to show that asymptotic stability holds for $\ap_R F(R/P^n)$, where $P$
  is a maximal ideal of $R$. Furthermore, by Lemma~\ref{lem:ann},
  $\ap_R F(R/P^n) = \{ P \}$ or $\emptyset$ for all $n \geq 1$. So we only
  need to show that $F(R/P^n)$ is either always 0 or always nonzero
  for all large $n$.
  \separate
  
  \textit{Step 2.} Let $F$ be given by the surjection $h_L \to F$,
  where $L \in \fgmod(R)$. First we consider the case where $L=J$ is an
  ideal of $R$. Suppose that $F(R/P^n)=0$ for infinitely many $n$.
  We will show that in fact $F(R/P^n)=0$ for all $n$, which will
  conclude this case. So fix $n \geq 1$. Let $N \geq n$ be such that
  $F(R/P^N)=0$. Let $\pi \colon R/P^N \to R/P^n$ be the natural projection
  map. Since $J$ is a projective $R$-module, the map $h_J(\pi) \colon
  h_J(R/P^N) \to h_J(R/P^n)$ is surjective. From the commutative diagram
  \[
    \xymatrix{
      \bighom_R \left( J,\frac{R}{P^N} \right)
      \ar@{->>}[r] \ar@{->>}[d]^{h_J(\pi)}
      & F \left( \frac{R}{P^N} \right) = 0
      \ar[d]^{F(\pi)}\\
      \bighom_R \left( J,\frac{R}{P^n} \right)
      \ar@{->>}[r]
      & F \left( \frac{R}{P^n} \right)
    }
  \]
  we see that $F(\pi)$ is surjective and therefore $F(R/P^n)=0$.
  \separate
  
  \textit{Step 3.} Next, we consider the case where $L=R/Q^i$ such that $Q$
  is a maximal ideal of $R$ and $i \geq 1$. We may assume that $Q=P$.
  Suppose that $F(R/P^N)=0$ for some $N \geq i$. We will show that in fact
  $F(R/P^n)=0$ for all $n \geq N$, concluding this case. We recall the
  following facts. For any $n_1 \geq 1$, $R/P^{n_1}$ is a principal ideal ring.
  Choose an element $p \in P \setminus P^2$. Then $P^{n_2}/P^{n_1}$ is generated
  by $p^{n_2}$ for all $0 \leq n_2 \leq n_1$. Now fix $n \geq N$. Let
  $p^{n-N} \colon R/P^N \to R/P^n$ denote multiplication by $p^{n-N}$.
  Again from the commutative diagram
  \[
    \xymatrix{
      \frac{R}{P^i}
      \ar[rr]^-{\cong}_-{p^{N-i}} \ar[d]^{\id}
      && \frac{P^{N-i}}{P^N}=
      \bighom_R \left( \frac{R}{P^i},\frac{R}{P^N} \right)
      \ar@{->>}[r] \ar[d]^{p^{n-N}=h_J \left( p^{n-N} \right)}_{\cong}
      & F \left( \frac{R}{P^N} \right) = 0
      \ar[d]^{F \left( p^{n-N} \right)}\\
      \frac{R}{P^i}
      \ar[rr]^-{\cong}_-{p^{n-i}}
      && \frac{P^{n-i}}{P^n}=
      \bighom_R \left( \frac{R}{P^i},\frac{R}{P^n} \right)
      \ar@{->>}[r]
      & F \left( \frac{R}{P^n} \right)
    }
  \]
  we see that $F(p^{n-N})$ is surjective and therefore $F(R/P^n)=0$.
  \separate
  
  \textit{Step 4.} Finally, we consider the general case where
  $L = J_1 \oplus \cdots \oplus J_k \oplus R/Q_1^{i_1} \oplus \dots \oplus
  R/Q_{\ell}^{i_{\ell}}$ such that $J_1,\dots,J_k \subseteq R$ are ideals,
  $Q_1,\dots,Q_{\ell}$ are maximal ideals of $R$ and $i_1,\dots,i_{\ell}
  \geq 1$. Again we may assume that $Q_1=\dots=Q_{\ell}=P$. Suppose
  that $F(R/P^n)=0$ for infinitely many $n$. Fix $N \geq \max\{1,
  i_1,\dots,i_{\ell}\}$ such that $F(R/P^N)=0$. Then repeating Steps 2 and 3,
  we see that for all $n \geq N$, each direct summand of $h_L(R/P^n) =
  h_{J_1}(R/P^n) \oplus \dots \oplus h_{J_k}(R/P^n) \oplus h_{R/P^{i_1}}(R/P^n)
  \oplus \dots \oplus h_{R/P^{i_{\ell}}}(R/P^n)$ is mapped to 0 in $F(R/P^n)$.
  Therefore $F(R/P^n)=0$ for all $n \geq N$.
\end{proof}

\begin{lem}
  Let $R$ be a Dedekind domain, $I$ be an ideal of $R$ and $M \in \fgmod(R)$.
  Then the modules $I^n M / I^{n+1}M$ are all isomorphic for large $n$.
  In particular, let $F$ be an $R$-linear functor from $\bigmod(R)$ to itself.
  Then the sets $\ap_R F(I^n M/I^{n+1}M)$ stabilize.
\end{lem}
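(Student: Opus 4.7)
The plan is to prove the stronger statement that $I^n M/I^{n+1} M \cong (R/I)^k$ for all sufficiently large $n$, where $k$ is the torsion-free rank of $M$. The ``in particular'' clause then follows at once, since any functor carries isomorphic modules to isomorphic modules, and isomorphic $R$-modules have the same associated primes. My first step is to invoke the structure theorem for finitely generated modules over a Dedekind domain (used already in Step~1 of Theorem~\ref{thm:dd}) to decompose $M$ as a finite direct sum of cyclic torsion modules $R/P^i$ together with a torsion-free piece $R^{k-1} \oplus J$, where $J \subseteq R$ is an ideal. Since the construction $I^n(-)/I^{n+1}(-)$ commutes with finite direct sums, I can analyse each summand separately, and the case $I=0$ is trivial.

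For a torsion summand $R/P^i$ I would use unique factorization of ideals to write $I = P^a K$ with $K+P = R$; then $K + P^i = R$ forces $K^n(R/P^i) = R/P^i$ for every $n$, so that $I^n(R/P^i) = P^{an}(R/P^i)$, which vanishes as soon as $an \geq i$. Hence the graded piece attached to this summand is eventually zero. It then remains to prove, for a nonzero ideal summand $J \subseteq R$ (including $J = R$) and every $n \geq 0$, that $I^n J / I^{n+1} J \cong R/I$.

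This last isomorphism is the main obstacle. My approach is to exploit that $I^n J$ is a nonzero ideal in a Dedekind domain, hence invertible and locally free of rank one; localizing at any prime $P$ therefore yields $(I^n J/I^{n+1} J)_P \cong R_P/I_P \cong (R/I)_P$. To upgrade this pointwise isomorphism to a global one I would view $I^n J/I^{n+1} J$ as a module over the semilocal Artinian ring $R/I$, which by CRT splits as a finite product of local rings $\prod_j R/P_j^{v_{P_j}(I)}$; the tensor product $I^n J \otimes_R R/I$ is then a rank-one projective $R/I$-module, and rank-one projective modules over a semilocal ring are free, which gives the desired isomorphism. Combining the three cases yields $I^n M/I^{n+1} M \cong (R/I)^k$ for all large $n$, and the stability of $\ap_R F(I^n M/I^{n+1} M)$ follows immediately.
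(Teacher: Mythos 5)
Your proof is correct and follows essentially the same route as the paper's: decompose $M$ by the structure theorem, observe that each torsion summand $R/P^i$ eventually contributes zero, and that each rank-one projective summand contributes $R/I$; you merely supply a proof (via invertibility and rank-one projectives over the Artinian ring $R/I$) of the isomorphism $I^nJ/I^{n+1}J \cong R/I$, which the paper asserts without argument. The only small imprecision is the torsion case with $I$ coprime to $P$ (i.e.\ $a=0$), where $P^{an}(R/P^i)$ never vanishes; there the graded quotient is still zero because $I^n(R/P^i) = R/P^i$ for all $n$, so the conclusion stands.
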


\begin{proof}
  As in Step 1 of Theorem~\ref{thm:dd}, we may assume that $M=J$ is an ideal
  of $R$ or $M=R/P^i$ for some maximal ideal $P$ of $R$ and $i \geq 1$. If
  $M=J \neq 0$ and $I \neq 0$, then $I^n M /I^{n+1}M \cong R/I$ for all
  $n \geq 0$. If $M = R/P^i$, then $I^n M / I^{n+1}M = 0$ for all $n \geq i$.
\end{proof}

\begin{thm} \label{thm:osc}
  Let $R$ be a Dedekind domain and $I \neq 0$ be an ideal of $R$.
  Then there exists $F \in \rlin$ such that the sets $\ap_R F(R/I^n)$
  do not stabilize. In fact, we may construct $F$ such that $\ap_R F(R/I^n)$
  is given by any sequence of subsets of $\ap_R (R/I) = V(I)$.
\end{thm}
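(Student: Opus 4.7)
The plan is to build $F$ as a direct sum of simple ``selector'' functors, each of which is nonzero only on a single cyclic torsion module $R/P^m$, switched on or off according to the desired sequence $\{S_n\}$. For each maximal ideal $P$ of $R$ and each integer $m \geq 1$, define
\[
  \psi_{P,m}(M) := \frac{(0:_M P) \cap P^{m-1} M}{(0:_M P) \cap P^{m} M}\ .
\]
This is manifestly an $R$-linear covariant functor, since any $R$-linear $f \colon M \to M'$ sends $(0:_M P)$ into $(0:_{M'} P)$ and $P^k M$ into $P^k M'$, and so induces a map on the subquotient. A short computation using $(0:_{R/P^m} P) = P^{m-1}/P^m$ and $P^k \cdot (R/P^m) = P^k/P^m$ for $k \leq m$ yields $\psi_{P,m}(R/P^{m'}) \cong R/P$ when $m' = m$ and $0$ otherwise; on a torsion-free module or on $R/Q^{m'}$ with $Q \neq P$ the socle $(0:_M P)$ already vanishes, so $\psi_{P,m}$ vanishes there too.

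Next, factor $I = \prod_{P \in V(I)} P^{e_P}$, so that $R/I^n \cong \bigoplus_{P \in V(I)} R/P^{n e_P}$ by the Chinese Remainder Theorem. Given the prescribed sequence $\{S_n\}_{n \geq 1}$ of subsets of $V(I)$, set
\[
  T_P := \{\, n e_P : n \geq 1,\ P \in S_n \,\} \subseteq \nn
\]
for each $P \in V(I)$, and define
\[
  F(M) := \bigoplus_{P \in V(I)} \bigoplus_{m \in T_P} \psi_{P,m}(M)\ .
\]
By the structure theorem for finitely generated modules over a Dedekind domain, every $M \in \fgmod(R)$ has only finitely many cyclic torsion invariant factors, so only finitely many pairs $(P,m)$ give $\psi_{P,m}(M) \neq 0$; consequently $F(M) \in \fgmod(R)$ even though each $T_P$ may be infinite, and $F \in \rlin$. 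Combining the additivity of $\psi_{P,m}$ with the decomposition of $R/I^n$, one obtains $F(R/I^n) \cong \bigoplus_{P \in S_n} R/P$, and hence $\ap_R F(R/I^n) = S_n$ for every $n \geq 1$.

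The main conceptual obstacle is that the natural surjections $R/I^{n+1} \twoheadrightarrow R/I^n$ propagate information across $n$, so by Corollary~\ref{cor:coh} and Theorem~\ref{thm:dd} any coherent or even finitely generated functor has stable $\ap_R F(R/I^n)$; the freedom must therefore come from a genuinely non-finitely-generated $F$. The key device is that $\psi_{P,m}$ couples the socle with the $P^{m-1}$-power filtration, thereby isolating the primary summand of exact length $m$ rather than a cumulative invariant; this rigidity is what allows the indexing sets $T_P$ to be chosen freely. A subsidiary check is the finite generation of $F(M)$ despite possibly infinite inner direct sums, which is immediate from the finiteness of the invariant factor decomposition over a Dedekind domain.
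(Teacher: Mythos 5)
Your construction is correct, and it takes a genuinely different route from the paper. The paper builds $F$ by hand on a skeleton $\tormod_0$ of the category of finitely generated torsion modules: it declares $F(R/P^e)=R/P$ or $0$ according to the prescribed sets, and then defines $F$ on morphisms by writing each map of $P$-torsion modules as a matrix and truncating it to its block-diagonal part (the off-diagonal entries being killed because they are either arbitrary into a zero target or multiples of $p$), after which functoriality must be verified by an explicit block computation; the skeleton/equivalence step is needed precisely because that definition depends on a chosen decomposition. You avoid all of this by making the ``detect the summand of exact length $m$'' idea intrinsic: the subquotient $\psi_{P,m}(M)=\bigl((0:_M P)\cap P^{m-1}M\bigr)/\bigl((0:_M P)\cap P^{m}M\bigr)$ is manifestly a covariant $R$-linear functor on all of $\fgmod(R)$ (indeed on $\bigmod(R)$), so functoriality, $R$-linearity and independence of decompositions come for free, and the only work left is the computation $\psi_{P,m}(R/P^{m'})\cong R/P$ iff $m'=m$ (and $0$ on ideals and on $Q$-torsion for $Q\neq P$), plus the bookkeeping with $T_P=\{ne_P: P\in S_n\}$, where $e_P\geq 1$ guarantees $ne_P\in T_P\Leftrightarrow P\in S_n$. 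You also correctly address the one point that needs care in your version, namely that the possibly infinite direct sum $\bigoplus_{P,m}\psi_{P,m}$ still takes finitely generated values, which follows from the structure theorem since each module has only finitely many torsion invariant factors. The trade-off: the paper's skeleton method is more flexible in that it can impose essentially arbitrary values on objects, while your method only produces functors of this socle--filtration subquotient type; but for the statement at hand that is all that is required, and your argument is shorter and removes the delicate verification that the matrix recipe respects composition. (As in the paper, the ``do not stabilize'' clause implicitly requires $I$ proper so that $V(I)\neq\emptyset$; the ``any prescribed sequence of subsets of $V(I)$'' clause holds in general.)
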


\begin{proof}
  First, let $\tormod \subseteq \fgmod(R)$ be the subcategory of finitely
  generated torsion $R$-modules. Then the torsion functor $\tau \colon \fgmod(R)
  \to \tormod$ is $R$-linear. Next, we recall from category theory that
  any category is naturally equivalent to any skeleton of itself. In particular,
  given a skeleton $\tormod_0$ of $\tormod$, there is an $R$-linear functor
  $\pi \colon \tormod \to \tormod_0$. Therefore it suffices to construct
  $F \colon \tormod_0 \to \tormod_0$ as in our Theorem.
  \separate
  
  We will define $\tormod_0$ as follows. Fix a linear ordering $\preceq$
  of the nonzero prime ideals $R$, and let the objects of $\tormod_0$
  be modules of the form $R/P_1^{e_1} \oplus \dots \oplus R/P_j^{e_j}$,
  where $P_1 \preceq \dots \preceq P_j$ and $e_i \leq e_{i+1}$ whenever
  $P_i = P_{i+1}$. For each maximal ideal $P$ we choose a subset $S_P$ of
  $\nn_{>0}$. Then we define $F(R/P^e) = R/P$ if $e \in S_P$, and 0 otherwise.
  We let $F(R/P_1^{e_1} \oplus \dots \oplus R/P_j^{e_j}) = \oplus_{i=1}^k
  R/P$, where $k$ is the number of $F(R/P_i^{e_i})$ that are nonzero.
  Next we define $F(f)$ for $f \colon M \to N$, where $M,N \in \tormod_0$.
  It suffices to consider the case where $M,N$ are both $P$-torsion for some
  maximal ideal $P$ of $R$. Fix an element $p \in P \setminus P^2$. Then
  $\bighom_R(R/P^{n_1}, R/P^{n_2}) = P^{n_2-n_1}/P^{n_2}$ is generated by
  $p^{n_2-n_1}$ if $n_2 \geq n_1 \geq 1$, and $\bighom_R(R/P^{n_1},R/P^{n_2})=
  R/P^{n_2}$ if $n_1 \geq n_2 \geq 1$. So we can identify $f$ with a square matrix
  with entries in $R$ (more precisely, in $R/P^{e_i}$ for suitable $e_i$)
  viewed as multiplication maps, adding rows or columns of zeroes if necessary.
  If $M,N$ are both direct sums of copies of $R/P^{e_1},\dots,R/P^{e_j}$
  with $e_1 < \dots < e_j$, then we define
  \[
    F(f) = F \left(
    \begin{matrix}
      \begin{matrix}
        A_1 & \\
            & A_2
      \end{matrix}
      & \ast \\
      p\ast & \begin{matrix}
        \ddots & \\
               & A_j
      \end{matrix}
    \end{matrix}
    \right)
    = \left(
    \begin{matrix}
      \begin{matrix}
      A_1 & \\
          & A_2
      \end{matrix}
      & 0 \\
      0 & \begin{matrix}
        \ddots & \\
               & A_j
      \end{matrix}
    \end{matrix}
    \right),
  \]
  where the entries in the lower diagonal of the matrix on the left
  are multiples of $p$, and $A_1,A_2,\dots,A_j$ are the square blocks
  that correspond to $R/P^{e_1},\dots,R/P^{e_j}$ respectively. Since
  $F(R/P^e) =$ either $R/P$ or 0, the definition of $F(f)$ does not
  depend on the choice of coset representatives in the entries of $f$.
  It is then immediate that $F$ preserves identity maps and is $R$-linear.
  Finally, if $f \colon M \to N$ and $g \colon N \to L$ where $M,N,L$
  are $P$-torsion, then
  \begin{align*}
    F(g \circ f) &=
    F \left(
    \left(
      \begin{matrix}
        \begin{matrix}
           B_1 & \\
               & B_2
         \end{matrix}
               & \ast \\
           p\ast & \begin{matrix}
           \ddots & \\
                  & B_j
           \end{matrix}
      \end{matrix}
    \right)
    \left(
      \begin{matrix}
        \begin{matrix}
           A_1 & \\
               & A_2
         \end{matrix}
               & \ast \\
           p\ast & \begin{matrix}
           \ddots & \\
                  & A_j
           \end{matrix}
      \end{matrix}
    \right)
    \right)\\
    &=
    F \left(
      \begin{matrix}
        \begin{matrix}
           B_1 A_1 + p\ast & \\
                           & B_2 A_2 + p\ast
         \end{matrix}
               & \ast \\
           p\ast & \begin{matrix}
           \ddots & \\
                  & B_j A_j + p\ast
           \end{matrix}
      \end{matrix}
    \right)\\
    &=
    \left(
      \begin{matrix}
        \begin{matrix}
           B_1 A_1 + p\ast & \\
                           & B_2 A_2 + p\ast
         \end{matrix}
               & 0 \\
           0 & \begin{matrix}
           \ddots & \\
                  & B_j A_j + p\ast
           \end{matrix}
      \end{matrix}
    \right)\\
    &=
    \left(
      \begin{matrix}
        \begin{matrix}
           B_1 A_1 & \\
                   & B_2 A_2
         \end{matrix}
               & 0 \\
           0 & \begin{matrix}
           \ddots & \\
                  & B_j A_j
           \end{matrix}
      \end{matrix}
    \right)
    = F(g)F(f)
  \end{align*}
  Therefore $F$ respects composition.
\end{proof}

\begin{cor}
  The functors constructed in Theorem~\ref{thm:osc} are not finitely
  generated.
\end{cor}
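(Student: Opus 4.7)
The plan is to read this as the contrapositive of Theorem~\ref{thm:dd}. That theorem guarantees that whenever $R$ is a Dedekind domain and $F \in \rlin$ is finitely generated, the sets $\ap_R F(M/I^n M)$ stabilize for every $M \in \fgmod(R)$ and every ideal $I$. The construction in Theorem~\ref{thm:osc} produces, for each choice of subsets $S_P \subseteq \nn_{>0}$ (one for each maximal ideal $P$ of $R$), a functor $F \in \rlin$ for which $\ap_R F(R/P^n)$ realizes any prescribed membership pattern in $n$. Taking the sequence of subsets to be non-eventually-constant thus yields a functor whose associated-prime sets $\ap_R F(R/I^n)$ do not stabilize, and Theorem~\ref{thm:dd} (applied with $M = R$) then forbids such an $F$ from being finitely generated.

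The only point requiring a line of verification is that the oscillation manufactured at a single maximal prime really survives passage from $R/P^n$ to $R/I^n$. Factoring $I = P_1^{a_1} \cdots P_r^{a_r}$ and using the Chinese Remainder Theorem gives $R/I^n \cong \bigoplus_{j=1}^r R/P_j^{n a_j}$; since $F$ is $R$-linear it commutes with finite direct sums, so by Lemma~\ref{lem:ann} we have $\ap_R F(R/I^n) \subseteq \{P_1,\dots,P_r\}$ with $P_j \in \ap_R F(R/I^n)$ exactly when $n a_j \in S_{P_j}$. Choosing $S_{P_1}$ to contain infinitely many multiples of $a_1$ and to miss infinitely many multiples of $a_1$ (for instance, $S_{P_1} = \{k a_1 : k \text{ odd}\}$) then forces the desired non-stabilization. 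There is no substantive obstacle; the corollary is really just the formal pairing of Theorem~\ref{thm:osc} with Theorem~\ref{thm:dd}.
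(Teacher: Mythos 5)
Your proposal is correct and is exactly the intended argument: the paper states this corollary without proof precisely because it is the formal contrapositive of Theorem~\ref{thm:dd} applied to the non-stabilizing examples of Theorem~\ref{thm:osc}, which is what you do, and your CRT/direct-sum check that the oscillation at a single $P_j$ shows up in $\ap_R F(R/I^n)$ matches the reduction already used in Step~1 of Theorem~\ref{thm:dd}. The only (harmless) reading choice is that ``the functors constructed'' means those built from a non-eventually-constant sequence of subsets, which is exactly the case you treat.
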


\begin{qn}
  Is there a finitely generated non-coherent functor $F$ such that the sets
  $\ap_R F(R/I^n)$ do not stabilize?
\end{qn}

\section{Functors arising from middle finite complexes}
\label{sec:midfin}

In this section, we will study a class of $R$-linear covariant
functors $F$ which arise naturally and are non-finitely generated in general.
An example of such kind of functor is the zeroth
local cohomology functor. We will obtain results that are related to all the
previous sections. Our main result is that over a one-dimensional Noetherian
domain $R$, the sets $\ap_R F(M/I^n M)$ stabilize.

\begin{defn}
  Let $R$ be a ring and $\seq \colon A \to B \to C$ be a complex of $R$-modules.
  \begin{enumerate}[label=(\arabic*),align=left,leftmargin=*,nosep]
    \item We say that an $R$-linear functor $F \colon \bigmod(R) \to
    \bigmod(R)$ arises from $\seq$ if $F(-)=\bigh(\seq \otimes -)$.
    \item We say that $\seq$ is middle finite if $B \in \fgmod(R)$.
  \end{enumerate}
\end{defn}

\begin{eg}
  Let $R$ be a ring and $I=(x_1,\dots,x_n)$ be an ideal of $R$. Then the
  functor $\Gamma_I$ arises from the middle finite complex
  \[
    0 \to R \to R_{x_1} \oplus \dots \oplus R_{x_n}
  \]
\end{eg}

\begin{rmk}
  Let $R$ be a Noetherian ring. By Corollary~\ref{cor:lc}, a functor that
  arises from a middle finite complex of $R$-modules is not finitely generated
  in general.
\end{rmk}

\begin{lem}
  Let $R$ be a Noetherian ring. Let $F$ be a functor that arises from the
  middle finite complex $A \xrightarrow{\partial_A} B \xrightarrow{\partial_B}
  C$. Then $F$ is coherent iff it is finitely generated.
\end{lem}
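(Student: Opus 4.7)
The $(\Rightarrow)$ direction is immediate from Definition~\ref{defn:coh} and the remark that coherent implies finitely generated. For the converse, suppose $F$ is finitely generated, so there is a surjection $T\colon h_L \twoheadrightarrow F$ for some $L \in \fgmod(R)$. By Lemma~\ref{lem:Yon}, $T$ is determined by the element $\bar y = T_L(\id_L) \in F(L)$, which we represent by a cycle $y \in \ker(\partial_B \otimes L) \subseteq B \otimes L$.

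The plan is to replace $\seq$ by a middle finite complex of \emph{finitely generated} $R$-modules whose associated functor $\widetilde F$ is coherent, sits naturally inside $F$, and still contains the generator $\bar y$. First, replace $A$ by $A' = \im(\partial_A) \subseteq B$, which is finitely generated since $B$ is and $R$ is Noetherian; this does not affect $\im(\partial_A \otimes M \to B \otimes M)$ because $A \twoheadrightarrow A'$ remains surjective after tensoring. So we may assume $A$ itself is finitely generated. Next, set $B' = \im(\partial_B) \subseteq C$, which is finitely generated, and write $C = \varinjlim C_\alpha$ as the filtered colimit of its finitely generated submodules $C_\alpha \supseteq B'$. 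Since tensor products commute with filtered colimits, $C \otimes L = \varinjlim (C_\alpha \otimes L)$; and because $y$ maps to zero in $C \otimes L$, some $C_1 := C_\alpha$ in the system must already satisfy that $y$ maps to zero in $C_1 \otimes L$.

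Now let $\widetilde F$ be the functor arising from the middle finite complex $A \to B \to C_1$, all of whose terms are finitely generated. By Lemma~\ref{lem:Har}\ref{item:complex}, $\widetilde F$ is coherent. Factoring $\partial_B$ through the inclusion $C_1 \hookrightarrow C$ yields $\ker(B \otimes M \to C_1 \otimes M) \subseteq \ker(\partial_B \otimes M)$ for every $M$, and the denominators coincide by the reduction above, so there is a natural inclusion $\widetilde F \hookrightarrow F$. By the choice of $C_1$, the cycle $y$ lies in the numerator of $\widetilde F(L)$, hence $\bar y$ is in the image of $\widetilde F(L) \hookrightarrow F(L)$, and therefore $T$ factors as $h_L \to \widetilde F \hookrightarrow F$. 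Since $T$ is surjective, so is $\widetilde F \hookrightarrow F$, forcing $F \cong \widetilde F$ and proving that $F$ is coherent.

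The main technical point is the second paragraph: using finite generation to capture the unique Yoneda generator of $F$ inside a coherent ``truncation'' $\widetilde F$. This works because a single element $y \in B \otimes L$ that dies in the filtered colimit $C \otimes L$ must already die at some finite stage, so only one finitely generated enlargement $B' \subseteq C_1 \subseteq C$ is needed. Crucially, the reduction would fail if $F$ were merely $R$-linear rather than finitely generated, since then one would have to handle an unbounded family of generators simultaneously.
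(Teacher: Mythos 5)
Your proof is correct and takes essentially the same route as the paper: use the Yoneda description of the surjection $h_L \to F$, use the filtered colimit $C=\varinjlim C_\alpha$ to replace $C$ by a finitely generated submodule containing $\im(\partial_B)$, replace $A$ by a finitely generated module with the same image, and invoke Lemma~\ref{lem:Har}\ref{item:complex}. The only (cosmetic) difference is that you capture just the single generating cycle $y$ and conclude via the factorization of the surjection through the subfunctor $\widetilde F$, whereas the paper chooses $C_0$ so that $\ker(B\otimes M\to C\otimes M)=\ker(B\otimes M\to C_0\otimes M)$ and then checks $F=\widetilde F$ elementwise at every $N$ by a second application of Lemma~\ref{lem:Yon}.
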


\begin{proof}
  Suppose that $F$ is finitely generated and is given by the surjection
  $h_M \to F$. Let $K,I$ denote the functors given by $K(-)=\ker(\partial_B
  \otimes -)$ and $I(-)= \im(\partial_A \otimes -)$. Let $N \in \fgmod(R)$
  and $n \in K(N)$ be such that $n + I(N) \in F(N)$. By Lemma~\ref{lem:Yon},
  there is $f \in \bighom_R(M,N)$ such that $n + I(N)\in \im F(f)$. That is,
  there are $m \in K(M)$ and $x \in A \otimes N$ such that $n =
  (\id_B \otimes f)(m) + (\partial_A \otimes \id_N)(x)$.
  Now $C \otimes M = \varinjlim_D (D \otimes M)$, where
  $D$ ranges over all finitely generated submodules of $C$. Since $B \otimes M
  \in \fgmod(R)$, there is a finitely generated submodule $C_0$ of $C$ that
  contains $\im \partial_B$ such that $\ker(B \otimes M \to
  C \otimes M) = \ker(B \otimes M \to C_0 \otimes M)$. From the commutative
  diagram
  \[
    \xymatrix@C=54pt{
      A \otimes M \ar[r]^{\partial_A \otimes \id_M} \ar[d]^{\id_A \otimes f}
      & B \otimes M \ar[r]^{\partial_B \otimes \id_M} \ar[d]^{\id_B \otimes f}
      & C_0 \otimes M \ar[d]^{\id_{C_0} \otimes f}\\
      A \otimes N \ar[r]^{\partial_A \otimes \id_N}
      & B \otimes N \ar[r]^{\partial_B \otimes \id_N}
      & C_0 \otimes N
    }
  \]
  we see that in fact $n \in \ker(B \otimes N \to C_0 \otimes N)$. Finally, let
  $A_0$ be a finitely generated submodule of $A$ such that $\partial_A(A_0)
  = \partial_A(A)$. Then $F$ arises from the complex $A_0 \to B \to C_0$.
  Therefore $F$ is coherent by Lemma~\ref{lem:Har}.
\end{proof}

\begin{lem}
  Let $R$ be a Noetherian ring, $I,J$ be ideals of $R$, $M \in \fgmod(R)$,
  $M'$ be a submodule of $M$ and $F$ be a functor that arises from the
  middle finite complex $A \to B \to C$. Then the sets $\ap_R F(I^n M/I^n M')$
  and the values $\dep_J F(I^n M/I^n M')$ stablize.
\end{lem}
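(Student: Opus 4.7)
The plan is to package $\bigoplus_{n\geq 0} F(I^n M/I^n M')$ as a finitely generated graded module over the Rees algebra $S=\bigoplus_{n\geq 0}I^n$ and then invoke Corollaries~\ref{cor:gradedap} and \ref{cor:gradeddep}, in direct analogy with the argument used in Corollary~\ref{cor:graded}. The key conceptual point is that, even though $A$ and $C$ may be enormous, the middle finiteness of $\seq$ forces the relevant kernel and image to live inside a finitely generated graded $S$-module.

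First I would form the graded $S$-module $N:=\bigoplus_{n\geq 0}I^n M/I^n M'$, which is the quotient of the Rees-style modules $\bigoplus_{n\geq 0}I^n M$ and $\bigoplus_{n\geq 0}I^n M'$ and so lies in $\fgmod(S)$. Tensoring $\seq\colon A\xrightarrow{\partial_A} B\xrightarrow{\partial_B} C$ with $N$ over $R$, and recording the grading inherited from $N$, produces a complex of $\zln$-graded $S$-modules with homogeneous differentials
\[
A\otimes_R N\xrightarrow{\partial_A\otimes\id_N}B\otimes_R N\xrightarrow{\partial_B\otimes\id_N}C\otimes_R N
\]
whose $n$-th graded piece is $\seq\otimes_R(I^n M/I^n M')$; in particular its $n$-th homology is $F(I^n M/I^n M')$.

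Next, a finite presentation $R^{\oplus b_1}\to R^{\oplus b_0}\to B\to 0$ of $B$ tensored with $N$ exhibits $B\otimes_R N$ as a quotient of $N^{\oplus b_0}$, so $B\otimes_R N\in\fgmod(S)$. Since $S$ is Noetherian, the graded submodules $\ker(\partial_B\otimes\id_N)$ and $\im(\partial_A\otimes\id_N)$ of $B\otimes_R N$ are each finitely generated over $S$, and so is their quotient
\[
H\;:=\;\ker(\partial_B\otimes\id_N)\big/\im(\partial_A\otimes\id_N)\;=\;\bigoplus_{n\geq 0}F(I^n M/I^n M').
\]

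Finally, applying Corollary~\ref{cor:gradedap} to $H$ yields the stability of $\ap_R F(I^n M/I^n M')$, and applying Corollary~\ref{cor:gradeddep} yields the stability of $\dep_J F(I^n M/I^n M')$. The only delicate point in the plan is checking that $B\otimes_R N\in\fgmod(S)$---this is precisely where the middle-finiteness hypothesis on $\seq$ is indispensable---after which the possibly non-finitely generated modules $A$ and $C$ cause no further trouble, because we only ever look at what they contribute inside the finitely generated module $B\otimes_R N$.
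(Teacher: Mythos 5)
Your proposal is correct and follows essentially the same route as the paper: both arguments form the graded complex $\bigoplus_{n\geq 0} A\otimes (I^nM/I^nM') \to \bigoplus_{n\geq 0} B\otimes (I^nM/I^nM') \to \bigoplus_{n\geq 0} C\otimes (I^nM/I^nM')$ over the Rees algebra $S=\bigoplus_{n\geq 0}I^n$, use middle finiteness only to see that the middle term is a finitely generated graded $S$-module, and then invoke Corollaries~\ref{cor:gradedap} and \ref{cor:gradeddep}. Your extra remarks (writing the complex as $\seq\otimes_R N$ and noting that the homology is itself finitely generated over $S$) are harmless elaborations of the same argument.
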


\begin{proof}
  The module $\bigoplus_{n \geq 0} B \otimes (I^n M/I^n M')$ is finitely
  generated and graded over $S = \bigoplus_{n \geq 0} I^n$, and the maps
  in the induced complex
  \[
    \bigoplus_{n \geq 0} A \otimes \frac{I^n M}{I^n M'}
    \to \bigoplus_{n \geq 0} B \otimes \frac{I^n M}{I^n M'}
    \to \bigoplus_{n \geq 0} C \otimes \frac{I^n M}{I^n M'}
  \]
  are homogenous of degree 0. The result then follows from
  Corollaries~\ref{cor:gradedap} and \ref{cor:gradeddep}.
\end{proof}

\begin{thm} \label{thm:midfin}
  Let $R$ be a one-dimensional Noetherian domain, $I$ be an ideal of $R$,
  $M \in \fgmod(R)$ and $F$ be a functor that arises from the middle finite
  complex $\seq \colon A \xrightarrow{\alpha} B \xrightarrow{\beta} C$.
  Then the sets $\ap_R F(M/I^n M)$ stabilize.
\end{thm}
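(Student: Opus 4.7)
The plan is to reduce to a question of vanishing in the one-dimensional local setting, then analyze $F(M/I^nM)$ via a short exact sequence extracted from the long exact sequence in homology arising from a short exact sequence of complexes. The case $I = 0$ is trivial, so assume $I \neq 0$. Since $B$ and $M$ are finitely generated, so is $B \otimes_R M/I^nM$, and thus so is the subquotient $F(M/I^nM)$; moreover $F(M/I^nM)$ is annihilated by $I^n$ by $R$-linearity. As $R$ is a one-dimensional Noetherian domain, $V(I)$ is a finite set of maximal ideals, so $\ap_R F(M/I^nM) \subseteq V(I)$. It therefore suffices to fix $\mathfrak{m} \in V(I)$ and show that $\mathfrak{m} \in \ap_R F(M/I^nM)$ stabilizes. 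Localizing at $\mathfrak{m}$, I may assume $R$ is a one-dimensional local Noetherian domain with $I$ being $\mathfrak{m}$-primary; then $F(M/I^nM)$ has finite length, so $\mathfrak{m}$ is associated iff $F(M/I^nM) \neq 0$. The problem reduces to showing that the vanishing of $F(M/I^nM)$ stabilizes.

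After replacing $A$ by a finitely generated submodule $A_0 \subseteq A$ with $\alpha(A_0) = \alpha(A)$ (which leaves $F$ unchanged), I set $P = B \otimes M$, $Q = C \otimes M$, $\phi = \beta \otimes \id_M \colon P \to Q$, $U = \im(\alpha \otimes \id_M) \subseteq P$, and $K = \ker\phi \subseteq P$, so that $P, U, K$ are finitely generated while $Q$ need not be. The short exact sequence of complexes
\[
0 \to I^n(\seq \otimes M) \to \seq \otimes M \to \seq \otimes M/I^nM \to 0
\]
induces a long exact sequence in homology. Using the filtration $U + I^nP \subseteq K + I^nP \subseteq \phi^{-1}(I^nQ)$ inside $P$, I extract the short exact sequence
\[
0 \to F(M)/V_n \to F(M/I^nM) \to W_n \to 0,
\]
where $V_n = (U + K \cap I^nP)/U \subseteq F(M)$ and $W_n = (\phi(P) \cap I^nQ)/I^n\phi(P)$. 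It now suffices to show that the vanishing of each of $F(M)/V_n$ and $W_n$ stabilizes.

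For $F(M)/V_n$: the Artin-Rees lemma applied to $K \subseteq P$ yields some $k$ with $K \cap I^nP = I^{n-k}(K \cap I^kP)$ for $n \geq k$, so $V_n = I^{n-k}\bar J$ for the fixed submodule $\bar J = (U + K \cap I^kP)/U$ of $F(M)$. If $F(M) = 0$ then $F(M)/V_n = 0$ for all $n$; otherwise $F(M)/V_n = 0$ would force $F(M) = I^{n-k}\bar J \subseteq I\cdot F(M)$, whence Nakayama's lemma in the local ring yields the contradiction $F(M) = 0$. So $F(M)/V_n$ is either zero for all $n$ or nonzero for all $n \geq k+1$, and its vanishing stabilizes.

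The principal obstacle is $W_n = (\phi(P) \cap I^nQ)/I^n\phi(P)$: the Artin-Rees lemma fails for the pair $\phi(P) \subseteq Q$ because $Q$ may not be finitely generated. Identifying $W_n$ with the image of the connecting map $\tor_1^R(R/I^n, Q/\phi(P)) \to \phi(P)/I^n\phi(P)$ in the Tor long exact sequence of $0 \to \phi(P) \to Q \to Q/\phi(P) \to 0$, I plan to express $Q$ as a directed union $\varinjlim_\lambda Q_\lambda$ of its finitely generated submodules containing $\phi(P)$. On each pair $\phi(P) \subseteq Q_\lambda$ the Artin-Rees lemma applies and gives $\lambda$-dependent control of $(\phi(P) \cap I^nQ_\lambda)/I^n\phi(P)$, and the one-dimensionality of $R$ together with the $\mathfrak{m}$-primarity of $I$ should then allow me to pass to the limit and conclude that $W_n$ is either zero for all large $n$ or nonzero for all large $n$. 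This cofinal argument controlling the non-finitely-generated $Q$ via its finitely generated submodules is the main technical step of the proof.
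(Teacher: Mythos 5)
Your reductions are sound as far as they go: localizing at the finitely many maximal ideals in $V(I)$, using that $F(M/I^nM)$ is a finitely generated module killed by $I^n$, and converting the problem to a vanishing question is exactly the right (and the paper's) first move; the filtration $U+I^nP \subseteq K+I^nP \subseteq \phi^{-1}(I^nQ)$ does yield your short exact sequence, and the Artin--Rees/Nakayama treatment of $F(M)/V_n$ is correct. But the proof stops precisely where the theorem actually lives. The term $W_n = (\phi(P)\cap I^nQ)/I^n\phi(P)$ is itself $G(R/I^n)$ for the functor $G$ arising from the middle finite complex $0 \to \phi(P) \to Q$, so showing that its vanishing stabilizes is (after your reductions) the full strength of the statement you are trying to prove; what you offer for it is a plan, not an argument. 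The proposed route --- write $Q = \varinjlim_\lambda Q_\lambda$ over finitely generated submodules containing $\phi(P)$ and apply Artin--Rees to each pair $\phi(P) \subseteq Q_\lambda$ --- runs straight into the uniformity problem it is supposed to solve: the Artin--Rees exponents $c_\lambda$ are not bounded in $\lambda$, and although for each fixed $n$ one has $\phi(P)\cap I^nQ = \phi(P)\cap I^nQ_{\lambda(n)}$ for some $\lambda(n)$, this witness can drift with $n$, so knowing that each sequence $(\phi(P)\cap I^nQ_\lambda)/I^n\phi(P)$ stabilizes in $n$ says nothing about the union. Neither one-dimensionality nor the $\tor_1$ reformulation removes this obstruction (over a one-dimensional non-Dedekind domain $\tor_1^R(R/I^n,-)$ does not even vanish on torsion-free modules), and no mechanism is indicated for interchanging ``large $n$'' with ``large $\lambda$''.

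The idea that is missing is the one the paper uses: do not chase finitely generated approximations of $Q$ at all, but pass to the Rees/associated graded picture. The module $\bigoplus_{n\geq 0} I^n\phi(P)/I^{n+1}\phi(P)$ is a finitely generated graded module over $S=\bigoplus_{n\geq 0}I^n$ regardless of how badly non-finitely generated $Q$ is, so the kernel of the induced graded map $\gamma$ into $\bigoplus_{n\geq 0} I^nQ/I^{n+1}Q$ is again finitely generated and graded, and Melkersson--Schenzel (Theorem~\ref{thm:MSgraded}, via Corollary~\ref{cor:gradedap}) makes the sets $\ap_R(\ker\gamma_n)$ stabilize. One then argues: if $W_m=0$ but $W_{m+1}\neq 0$ for some large $m$, then $\phi(P)\cap I^{m+1}Q$ sits inside $I^m\phi(P)$ and strictly contains $I^{m+1}\phi(P)$, producing a nonzero element of $\ker\gamma_m$; stability then forces $\ker\gamma_n\neq 0$, hence $W_{n+1}\supseteq\ker\gamma_n\neq 0$, for all large $n$, contradicting $W_m=0$. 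Without this (or some genuinely new uniform Artin--Rees statement for the pair $\phi(P)\subseteq Q$), your argument establishes only the easy half of the theorem.
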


\begin{proof}
  First, since $\seq \otimes (M/I^n M) = (\seq \otimes M) \otimes (R/I^n)$,
  it suffices to show that the sets $\ap_R F(R/I^n)$ stabilize. We have
  $\seq \otimes (R/I^n) \colon A/I^n A \xrightarrow{\alpha_{n-1}} B/I^n B
  \xrightarrow{\beta_{n-1}} C/I^n C$, so
  \[
    F(R/I^n) = \frac{\ker \beta_{n-1}}{\im \alpha_{n-1}}
    = \frac{\beta^{-1}(I^n C)}{\alpha(A) + I^n B}
    = F'(R/I^n),
  \]
  where $F'$ arises from the complex $0 \to B/\alpha(A) \to C$. So we may
  assume that $A=0$. Furthermore, since localization is flat, we may assume
  that $R$ is local of dimension one. So it remains to show that $F(R/I^n)$
  is either always 0 or always nonzero for all large $n$.
  \separate
  
  Now let $S = \bigoplus_{n \geq 0} I^n$ and $\gamma \colon \bigoplus_{n \geq 0}
  (I^n B / I^{n+1} B) \to \bigoplus_{n \geq 0} (I^n C / I^{n+1} C)$ be the map
  induced by $\beta$ with graded components $\gamma_n$. By
  Corollary~\ref{cor:gradedap}, there is $N$ so large such that the sets
  $\ap_R (\ker \gamma_n)$ are equal for all $n > N$. Again we have $\beta_n
  \colon B/I^{n+1} B \to C/I^{n+1} C$, so that $F(R/I^n) = \ker \beta_{n-1}$.
  Suppose that there is $m > N$ such that $\ker \beta_{m-1} = 0$ but
  $\ker \beta_m \neq 0$. Then $I^{m+1} B \subsetneq \beta^{-1}(I^{m+1} C)
  \subseteq \beta^{-1} (I^m C) = I^m B$, so that $0 \neq \ker \beta_m
  \subseteq \ker \gamma_m$, and hence $\ker \gamma_n \neq 0$ for all $n > N$.
  But $\ker \beta_n \supseteq \ker \gamma_n$ always holds. Therefore
  we have $\ker \beta_n \neq 0$ for all $n>N$.
\end{proof}

\end{document}